\title[Topological Models of Abstract Commensurators]{Topological Models of Abstract Commensurators}
\author{Edgar A. Bering IV}
\address{Department of Mathematics, Technion---Israel Institute of Technology, Haifa, Israel}
\email{bering@campus.technion.ac.il}
\author{Daniel Studenmund}
\address{Department of Mathematical Sciences, Binghamton University, Binghamton, New York}
\email{daniel@math.binghamton.edu}
\thanks{EB was supported by the Azrieli Foundation}
\thanks{DS was supported in part by NSF grant DMS-1547292}
\subjclass[2020]{Primary 57M07; Secondary 20F65, 20F67, 57M10}
\newtheorem{theorem}{Theorem}
\newtheorem*{introtheorem}{Theorem}
\newtheorem{corollary}{Corollary}
\newtheorem{proposition}{Proposition}
\newtheorem{lemma}{Lemma}
\theoremstyle{definition}
\newtheorem{definition}{Definition}
\newtheorem*{convention}{Convention}
\newtheorem{fact}{Fact}
\newcommand{\oldqedsymbol}{\qedsymbol}
\theoremstyle{remark}
\newtheorem{remark}{Remark}
\newcommand{\LabDef}[1]{\label{defn:#1}}
\newcommand{\RefDef}[1]{Definition~\ref{defn:#1}}
\newcommand{\LabThm}[1]{\label{thm:#1}}
\newcommand{\RefThm}[1]{Theorem~\ref{thm:#1}}
\newcommand{\LabProp}[1]{\label{prop:#1}}
\newcommand{\RefProp}[1]{Proposition~\ref{prop:#1}}
\newcommand{\LabLem}[1]{\label{lem:#1}}
\newcommand{\RefLem}[1]{Lemma~\ref{lem:#1}}
\newcommand{\LabCor}[1]{\label{cor:#1}}
\newcommand{\RefCor}[1]{Corollary~\ref{cor:#1}}
\newcommand{\LabEq}[1]{\label{eq:#1}}
\newcommand{\RefEq}[1]{(\ref{eq:#1})}
\newcommand{\LabSec}[1]{\label{sec:#1}}
\newcommand{\RefSec}[1]{Section~\ref{sec:#1}}
\DeclareMathOperator{\Comm}{Comm}
\DeclareMathOperator{\Aut}{Aut}
\DeclareMathOperator{\Mod}{Mod}
\DeclareMathOperator{\HMod}{\mathcal{E}}
\DeclareMathOperator{\Homeo}{Homeo}
\DeclareMathOperator{\id}{id}
\DeclareMathOperator{\QI}{QI}
\DeclareMathOperator{\HE}{HE}
\newcommand{\cent}[2]{C_{#2}(#1)}
\providecommand{\conj}[1]{\ensuremath{c_{#1}}}
\DeclareMathOperator{\blip}{\widetilde{Lip}}
\DeclareMathOperator{\injrad}{inj.rad}
\newcommand{\fidx}{\leq_f}
\newcommand{\nfidx}{\trianglelefteq_f}
\newcommand{\from}{\colon}
\newcommand{\homeo}{\approx}
\newcommand{\bdry}{\partial}
\newcommand{\hypspace}{\mathbb{H}}
\newcommand{\ucov}[1]{\widetilde{#1}}
\newcommand{\sol}[1]{\widehat{#1}}
\newcommand{\blmap}{\ell}
\newcommand{\profin}[1]{\widehat{#1}}
\DeclareMathOperator{\pfi}{\iota}
\newcommand{\solmod}[2]{\profin{#1}\times_{#1} \ucov{#2}}
\newcommand{\commincl}{\hat{\iota}}
\newcommand{\Gn}{G_{\leq n}}
\newcommand{\ficol}[1]{\operatorname{\mathcal{F}}(#1)}
\newcommand{\bfG}{\mathbf{G}}
\newcommand{\adhocf}{\hat{f}}
\newcommand{\ucobp}{p}
\newcommand{\minelt}{m}
\DeclareMathOperator{\dpro}{\profin{d}}
\DeclareMathOperator{\dsol}{\sigma}
\providecommand{\abs}[1]{\ensuremath{\left\lvert#1\right\rvert}}
\providecommand{\set}[1]{\ensuremath{\left\lbrace#1\right\rbrace}}
\providecommand{\floor}[1]{\ensuremath{\left\lfloor#1\right\rfloor}}
\newcommand\suchthat{ \mathrel{}\middle| \mathrel{}}
\newcommand\restr[2]{{
  \left.\kern-\nulldelimiterspace 
  #1 
  \vphantom{\big|} 
  \right|_{#2} 
  }}
\newcommand{\catname}[1]{\mathsf{#1}}
\newcommand{\Grp}{\catname{Grp}}
\newcommand{\Cpt}{\catname{Cpt}}
\newcommand{\Top}{\catname{Top}}
\newcommand{\HTop}{\catname{HTop}}
\newcommand{\HCpt}{\catname{HCpt}}
\newcommand{\pro}[1]{\text{\normalfont{pro-}}#1} 
\let\oldmarginpar\marginpar
\renewcommand{\marginpar}[1]{
\oldmarginpar{\footnotesize\color{red} #1}}
\begin{document}

\begin{abstract}
The full solenoid over a topological space $X$ is the inverse limit of all finite covers. When $X$ is a compact Hausdorff space admitting a locally path connected universal cover, we relate the pointed homotopy equivalences of the full solenoid to the abstract commensurator of the fundamental group $\pi_1(X)$. The relationship is an isomorphism when $X$ is an aspherical CW complex. If $X$ is additionally a geodesic metric space and $\pi_1(X)$ is residually finite, we show that this topological model is compatible with the realization of the abstract commensurator as a subgroup of the quasi-isometry group of $\pi_1(X)$. This is a general topological analogue of work of Biswas, Nag, Odden, Sullivan, and others on the universal hyperbolic solenoid, the full solenoid over a closed surface of genus at least two.
\end{abstract}

\vspace*{-1.4cm}

\maketitle
    
\section{Introduction}

The abstract commensurator of a group $G$, denoted $\Comm(G)$, is the collection of isomorphisms $\phi\from H\to K$ between finite-index subgroups $H, K \leq G$, modulo equivalence which identifies isomorphisms agreeing on a finite-index domain. When $G$ is infinite, $\Comm(G)$ is a natural relaxation of the automorphism group $\Aut(G)$, but is often more difficult to study. For example, it is not known whether $\Comm(F_2)$ is simple, where $F_2$ is a two-generated free group. Our goal is to provide topological and metric perspectives on $\Comm(G)$ parallel to those used to study $\Aut(G)$.

Given a group $G$ with Eilenberg-MacLane space $X = K(G,1)$, the automorphism group $\Aut(G)$ is topologically modelled by $\HMod(X,\ast)$, the group of pointed homotopy classes of pointed homotopy equivalences of $X$. When $G$ is finitely generated, $\Aut(G)$ is metrically modelled by a natural map to $\QI(G)$, the quasi-isometry group of $G$. If $X$ is a compact geodesic metric space, these two models of $\Aut(G)$ are compatible: each pointed homotopy equivalence of $X$ lifts to a proper homotopy equivalence of the universal cover $\ucov{X}$ preserving the $G$-orbit of the basepoint, which determines an element of $\QI(G)$ by the Milnor-Schwarz lemma.

The metric model of $\Aut(G)$ extends to a natural map $\Comm(G) \to \QI(G)$, a metric model of $\Comm(G)$. This article develops a topological model of $\Comm(G)$ compatible with this metric model and analogous to the homotopy model of $\Aut(G)$.

\subsection{Statement of results} Our results apply to fundamental groups of compact Hausdorff spaces admitting a locally path-connected universal cover; spaces satisfying the latter condition are called {\em unloopable}. Following classic constructions of McCord \cite{mccord}, for any such space $(X,\ast)$ we construct the \emph{full solenoid} over $X$ as the inverse limit $(\sol{X},\ast)$ of all finite-sheeted covers of $(X,\ast)$. 

\begin{introtheorem}[\ref{thm:ptd-homotopy-comm}]
    Suppose $(X,\ast)$ is an unloopable pointed compact Hausdorff space. Then there is a homomorphism
    \[ C\from \HMod(\sol{X},\ast) \to \Comm(\pi_1(X,\ast)) \]
    If $X$ is homotopy equivalent to an aspherical CW complex, this map is an isomorphism.
\end{introtheorem}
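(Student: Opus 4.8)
The plan is to realize $C$ as a composite $\HMod(\sol{X},\ast)\to\Aut\big(\{X_H\}_{H\fidx G}\big)\to\Comm(G)$, where the first arrow records the action of a pointed homotopy equivalence on the pro-homotopy type of the solenoid and the second passes to fundamental pro-groups. Write $G=\pi_1(X,\ast)$ and let $\{X_H\}_{H\fidx G}$ be the inverse system of pointed finite covers, so that $\sol{X}=\varprojlim_H X_H$ and, following McCord \cite{mccord}, the pointed shape of $\sol{X}$ is represented by $\{X_H\}$. Given such an $f$, the first step is to lift it to the universal cover $\ucov{X}$: because $\ucov{X}$ is simply connected and --- this is where unloopability enters --- locally path connected, the lifting criterion for the covering $p_H\from\ucov{X}\to X_H$ lets me lift $\ucov{X}\xrightarrow{\iota}\sol{X}\xrightarrow{f}\sol{X}\xrightarrow{\pi_H}X_H$, where $\iota$ is the canonical inclusion of the baseleaf, to a map $\ucov{X}\to\ucov{X}$ carrying $\ucov{\ast}$ to $\ucov{\ast}$; by uniqueness of lifts these agree for all $H$ and assemble to $F=F_f\from\ucov{X}\to\ucov{X}$ with $\iota F=f\iota$ and $F(\ucov{\ast})=\ucov{\ast}$. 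Lifting through $p_G\from\ucov{X}\to X$ the homotopies witnessing that $f$ is a homotopy equivalence shows that $F$ too is a pointed homotopy equivalence.

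From $F$ one reads off a germ $\psi_f\from M\to G$ (with $M\fidx G$) of homomorphisms between finite-index subgroups, with respect to which $F$ is equivariant up to pointed homotopy; equivalently, $\psi_f$ is the effect on fundamental pro-groups of the self-equivalence of $\{X_H\}$ induced by $f$, the bonding maps of $\{H\}_{H\fidx G}$ being the inclusions. Running a homotopy inverse of $f$ through the same construction gives the germ of an inverse for $\psi_f$, so after restricting its domain $\psi_f$ is an isomorphism between finite-index subgroups; since $\Aut$ of the pro-group $\{H\}_{H\fidx G}$ is canonically $\Comm(G)$, we put $C(f)=[\psi_f]$. That $C$ is well defined on pointed homotopy classes and is a homomorphism is then formal from functoriality of shape and of $\pi_1$, which proves the first assertion.

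For the second assertion, since $\HMod(\sol{\blank},\ast)$ and $\Comm(\pi_1(\blank))$ are homotopy invariants and the solenoid construction carries pointed homotopy equivalences to pointed homotopy equivalences, we may replace $X$ by a homotopy-equivalent aspherical CW complex and assume $X$ is such; then every $X_H$ is a $K(H,1)$. In that case the $\pi_1$-map $\Aut(\{X_H\})\to\Comm(G)$ is an isomorphism: for finite-index $M,H$ one has $[X_M,X_H]_\ast=\Hom(M,H)$, so the pro-morphism sets on the two sides literally coincide. Surjectivity of this step furnishes the inverse of $C$: a representative $\phi\from H\to K$ of a class in $\Comm(G)$ is induced, by asphericity, by a pointed homotopy equivalence $g_\phi\from X_H\to X_K$, unique up to pointed homotopy; applying the solenoid functor and conjugating by the canonical homeomorphisms $\sol{X_H}\homeo\sol{X}\homeo\sol{X_K}$ coming from the covers $X_H\to X$ and $X_K\to X$ produces a pointed homotopy equivalence $f_\phi$ of $\sol{X}$ with $C(f_\phi)=[\phi]$. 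Thus it remains only to prove that $\HMod(\sol{X},\ast)\to\Aut(\{X_H\})$ is a bijection.

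This is the main point. Replacing $\{X_H\}$ by a cofinal descending tower $X_{H_1}\leftarrow X_{H_2}\leftarrow\cdots$ of coverings --- which exists because $G$ has only countably many finite-index subgroups, being finitely generated --- I claim $[\sol{X},\sol{X}]_\ast\cong\varprojlim_n[\sol{X},X_{H_n}]_\ast$ with no $\varprojlim^1$ obstruction. Indeed, coverings are Hurewicz fibrations with unique homotopy lifting and $\sol{X}$ is connected, so once a lift of the initial map of a pointed homotopy is chosen at the next level of the tower, the homotopy lifts uniquely; running this up the tower rectifies a compatible family of pointed homotopy classes of maps $\sol{X}\to X_{H_n}$ to a strictly compatible one, i.e.\ to a map $\sol{X}\to\sol{X}$, and shows that two maps with equal classes at every level are pointed homotopic. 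Combining this with $[\sol{X},X_{H_n}]_\ast=\varinjlim_m[X_{H_m},X_{H_n}]_\ast$ --- valid because $\sol{X}$ is the inverse limit of the compact finite covers of the original space and maps from a compactum into a CW complex factor through a finite stage up to homotopy --- identifies $[\sol{X},\sol{X}]_\ast$ with the self-morphism set of $\{X_H\}$ in the pro-homotopy category and the homotopy equivalences with the pro-isomorphisms; hence $\HMod(\sol{X},\ast)\cong\Aut(\{X_H\})$ and $C$ is an isomorphism, inverse to $\phi\mapsto f_\phi$. The hardest steps, I expect, are this rectification --- checking that the tower induction genuinely avoids $\varprojlim^1$ and behaves well with respect to basepoints --- together with the McCord-type input that the shape of $\sol{X}$ is pro-$\{X_H\}$, which is what makes maps out of the solenoid controllable by finite covers.
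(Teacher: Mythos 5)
Your architecture is the same as the paper's: identify $\HMod(\sol{X},\ast)$ with the group of self-isomorphisms of the inverse system $\{X_H\}$ in the pointed pro-homotopy category (the paper's Proposition~\ref{prop:nat-pro-bijection}, proved from the same two ingredients you use --- rectification of pro-morphisms to strict morphisms via unique homotopy lifting for coverings, and the Marde\v{s}i\'{c}--Segal fact that the limit of an inverse system of compacta is an $\HTop_\ast$-expansion), then pass to fundamental pro-groups using asphericity, and finally invoke $\Aut_{\pro{\Grp}}(\{H\}_{H\fidx G})\cong\Comm(G)$ (Proposition~\ref{prop:comm-is-pro-aut}). Your tower formulation of the rectification and of injectivity is a legitimate reindexing of the paper's Lemma~\ref{lem:covering-homotopy-limit} and of the fibration argument in Proposition~\ref{prop:nat-pro-bijection}.

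One step fails as written: ``we may replace $X$ by a homotopy-equivalent aspherical CW complex.'' A compact Hausdorff space homotopy equivalent to a CW complex is in general only finitely dominated; the replacement need not be compact (this is Wall's finiteness obstruction), and compactness of every term $X_H$ is exactly what makes $\sol{X}\to\{X_H\}$ an expansion, i.e.\ what lets maps and homotopies out of $\sol{X}$ factor through finite covering stages. Your later appeal to ``maps from a compactum into a CW complex factor through a finite stage'' also silently requires each $X_H$ to \emph{be} a CW complex, which you cannot arrange without this replacement. The repair is the paper's: never change $X$; use asphericity only to assert that $\HCpt_\ast(X_\alpha,X_\beta)\to\Grp(\pi_1(X_\alpha),\pi_1(X_\beta))$ is a bijection for each pair (Whitehead), a statement about morphism sets in the homotopy category that needs only that each $X_\alpha$ is homotopy \emph{equivalent} to an aspherical complex. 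Two smaller points: the countable cofinal tower requires $G$ to be finitely generated, which holds in the aspherical case (where $G$ is finitely presented) but should be said; and in the general compact Hausdorff case your definition of $C(f)$ by ``reading off a germ from the lift $F$'' already presupposes the expansion property you defer to the end --- without it there is no reason $F$ is equivariant over any finite-index subgroup, so the first assertion of the theorem also rests on that input.
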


For example, let $X = T^n$ be an $n$-dimensional torus with $G = \pi_1(X,\ast) \cong \mathbb{Z}^n$. It is an exercise to prove that $\Comm(\mathbb{Z}^n) \cong \operatorname{GL}_n(\mathbb{Q})$, and so there is an isomorphism $\HMod \left( \sol{T^n}, \ast \right) \cong \operatorname{GL}_n(\mathbb{Q})$.

After discussing abstract commensurators in \RefSec{abcomm} and reviewing the structure of full solenoids in \RefSec{full-solenoids}, we prove \RefThm{ptd-homotopy-comm} in \cref{sec:shape-of-solenoids}.
The proof combines the structure of the full solenoid as a fiber bundle over $X$ with totally disconnected fibers with standard facts in shape theory applied to the inverse system of covering spaces used to define $\sol{X}$. We present the arguments with care, starting with a review of inverse systems in \RefSec{inverse-systems}, because the techniques may be unfamiliar to some readers. 

Continuing the analogy with the automorphism group, for a residually finite group $G$ we connect the topological model of $\Comm(G)$ to its realization in $\QI(G)$ in \RefSec{he-qi}.
To do so, we assume the base space $X$ is a geodesic metric space and then use the metric structure on $\sol{X}$ as a space foliated by leaves quasi-isometric to $G$, which we develop in \RefSec{metric-notions}.

\begin{introtheorem}[\ref{thm:qi-map-factors}]
Suppose $(X,\ast)$ is an unloopable compact geodesic metric space with residually finite fundamental group $G = \pi_1(X,\ast)$. There is a homomorphism $Q\from \HMod(\sol{X},\ast)\to \QI(G)$ which factors through the homomorphism $C\from \HMod(\sol{X},\ast)\to \Comm(G)$ of \RefThm{ptd-homotopy-comm}.
\end{introtheorem}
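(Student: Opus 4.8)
The plan is to build $Q$ directly from the leafwise geometry of $\sol X$ developed in \RefSec{metric-notions}, and then to identify $Q$ with the composite of $C$ and the metric model homomorphism $\Comm(G)\to\QI(G)$ from the introduction. Residual finiteness of $G$ makes the path-component $L_\ast$ of $\ast$ in $\sol X$ --- the baseleaf --- the image of a natural continuous bijection from $\ucov X$, compatibly with the description $\sol X\cong\solmod{G}{X}$, and this bijection carries the leafwise length metric to the length metric on $\ucov X$ lifted from $X$; by the Milnor-Schwarz lemma $\ucov X$ with this metric is quasi-isometric to $G$, and I fix such a quasi-isometry once and for all. Given a pointed homotopy equivalence $f$ of $(\sol X,\ast)$, the composite of $f$ with the map $\ucov X\to\sol X$ lifts --- uniquely as a pointed map --- through the covering $\profin G\times\ucov X\to\sol X$ with deck group $G$; since $\ucov X$ is simply connected and, by unloopability, locally path connected, while $\profin G$ is totally disconnected, this lift is $p_\ast\circ\tilde f$ for a unique continuous pointed self-map $\tilde f\from(\ucov X,\tilde\ast)\to(\ucov X,\tilde\ast)$. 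Uniqueness of lifts gives $\widetilde{f_1\circ f_2}=\tilde f_1\circ\tilde f_2$, and I set $Q([f]):=[\tilde f]\in\QI(G)$.

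Showing this is well defined is where compactness of $\sol X$ does the work. The geometric input, established in \RefSec{metric-notions}, is a local comparison between the leafwise and ambient metrics on $\sol X$: there are $\epsilon_0>0$ and $R<\infty$ so that leafwise-close points are ambient-close, while ambient-$\epsilon_0$-close points lying on a common leaf are leafwise-$R$-close. Every continuous map out of the compact spaces $\sol X$ or $\sol X\times I$ is uniformly continuous, so subdividing the interval into a uniformly bounded number of pieces yields, first, that $\tilde f$ and the lift $\tilde g$ of a homotopy inverse of $f$ are coarsely Lipschitz with constants independent of $f$, and second, that a pointed homotopy $g\circ f\simeq\id_{\sol X}$ lifts to a pointed homotopy $\tilde g\circ\tilde f\simeq\id_{\ucov X}$ whose tracks have uniformly bounded leafwise length. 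Hence $\tilde g\circ\tilde f$, and likewise $\tilde f\circ\tilde g$, lies within bounded distance of the identity, so $\tilde f$ is a quasi-isometry with coarse inverse $\tilde g$; the same subdivision argument applied to a pointed homotopy $f_0\simeq f_1$ shows $\tilde f_0$ and $\tilde f_1$ are within bounded distance. Thus $[\tilde f]$ depends only on $[f]$, and together with the composition formula this makes $Q\from\HMod(\sol X,\ast)\to\QI(G)$ a homomorphism.

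It remains to show $Q$ factors as $\HMod(\sol X,\ast)\xrightarrow{C}\Comm(G)\to\QI(G)$; concretely, that $\tilde f$ represents the image of $\phi:=C([f])$ under the metric model map. The construction of $C$ in \RefSec{shape-of-solenoids} witnesses $\phi$ by a level map in the tower defining $\sol X$: there are finite covers $X_j,X_i$ of $X$ with $\pi_1(X_j,\ast)=H_j\fidx G$ and $\pi_1(X_i,\ast)=H_i\fidx G$, a pointed map $f_{ji}\from X_j\to X_i$ whose induced map on $\pi_1$ represents $\phi$, and a homotopy, as maps $\sol X\to X_i$, between $f$ followed by the projection $\sol X\to X_i$ and the projection $\sol X\to X_j$ followed by $f_{ji}$. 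Pulling this homotopy back along $\ucov X\to\sol X$ and lifting it through the covering projection $\ucov X\to X_i$ starting from $\tilde f$, its other endpoint $g_1$ is a lift of the composite $\ucov X\to X_j\xrightarrow{f_{ji}}X_i$ through $\ucov X\to X_i$, hence is equivariant for the $H_j$- and $H_i$-actions on $\ucov X$ via a representative of $\phi$; and the subdivision argument of the previous paragraph bounds $d(\tilde f,g_1)$. Therefore $\tilde f$ is a quasi-isometry of $\ucov X$ that is coarsely equivariant with respect to $\phi\from H_j\to H_i$. Since $H_j$ and $H_i$ act geometrically on $\ucov X$, a $\phi$-coarsely-equivariant quasi-isometry is unique up to bounded distance and, by the definition of the metric model of $\Comm(G)$, is a representative of the image of $\phi$ in $\QI(G)$. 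Hence $Q([f])=[\tilde f]$ equals the image of $C([f])$, as claimed.

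The main obstacle is the well-definedness packaged into the second and third paragraphs: extracting quasi-isometry constants for $\tilde f$, and uniform bounds on lifted homotopies, from nothing but uniform continuity on the compact solenoid. This rests entirely on having the correct local dictionary between the leafwise and ambient metrics on $\sol X$, which is the technical payload of \RefSec{metric-notions}; granting that dictionary, the remaining steps are covering-space bookkeeping together with the Milnor-Schwarz lemma.
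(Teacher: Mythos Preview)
Your argument is correct and follows the paper's approach: construct $Q$ by restricting to the baseleaf and using uniform continuity on the compact solenoid to obtain coarse Lipschitz bounds and bounded homotopy tracks (this is exactly the content of \RefLem{uniform-continuous-coarse-lipschitz}, \RefLem{homotopy-bounded}, and \RefProp{modb-qi-morphism}), then verify the factoring by comparing $\tilde f$ with the lift of a level map $f_{ji}$ in the covering tower. The paper's only variation is to first replace $f$ by the limit $\sol f$ of a system morphism via \RefLem{covering-homotopy-limit}, so that the orbit-point identity $\sol f(h\ast)=\phi(h)\ast$ holds exactly rather than up to a lifted homotopy; note also that your phrase ``constants independent of $f$'' is a slip---the coarse Lipschitz constants depend on the modulus of uniform continuity of $f$.
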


We were motivated to study full solenoids by work of Sullivan, with Biswas and Nag, who initiated the study of the universal hyperbolic solenoid $\sol{\Sigma}$~\cites{biswas-nag-sullivan, sullivan-universal, sullivan14}, the full solenoid over a closed surface $\Sigma$ of genus $g\geq 2$. Odden then proved that the pointed mapping class group $\Homeo(\sol{\Sigma},\ast)/\Homeo_0(\sol{\Sigma},\ast) \cong \Comm(\pi_1(\Sigma))$ provides a topological model of the abstract commensurator of the fundamental group of a closed surface~\cite{odden}.
Odden connects the homeomorphisms of the solenoid to the action of $\Comm(\pi_1(\Sigma))$ on the boundary of the hyperbolic plane.
\RefSec{boundarycomm} records a generalization in the setting of hyperbolic groups, relating the action of $\Comm(G)$ on $\partial G$ to the solenoid model.

\begin{introtheorem}[\RefCor{sol-hyp-bdry}]
 Suppose $G\cong \pi_1(X,\ast)$, where $(X,\ast)$ is an unloopable compact geodesic metric space homotopy equivalent to an aspherical CW complex. Suppose $G$ is a residually finite, torsion-free, non-elementary hyperbolic group. Then the homomorphism $Q\from \HMod(\sol{X},\ast)\to QI(G)$ of \RefThm{qi-map-factors} induces isomorphisms
 \[
 \HMod(\sol{X},\ast) \cong \Comm(G) \cong \Comm_{\Homeo(\bdry G)}(G).
 \]
\end{introtheorem}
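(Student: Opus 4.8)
The first isomorphism requires no new argument: since $X$ is homotopy equivalent to an aspherical CW complex, \RefThm{ptd-homotopy-comm} already provides the isomorphism $C\from\HMod(\sol{X},\ast)\to\Comm(G)$. So the plan is to establish the second isomorphism $\Comm(G)\cong\Comm_{\Homeo(\bdry G)}(G)$ and to check that it is the one induced by $Q$. Concretely, I would build a homomorphism $\Theta\from\Comm(G)\to\Homeo(\bdry G)$ out of boundary extensions of quasi-isometries, prove it is injective with image exactly $\Comm_{\Homeo(\bdry G)}(G)$, and then observe that $\Theta$ is the composite of the factored map $Q'\from\Comm(G)\to\QI(G)$ of \RefThm{qi-map-factors} with the boundary homomorphism $\QI(G)\to\Homeo(\bdry G)$.

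For the construction, recall that a finite-index subgroup $H\leq G$ includes as a quasi-isometry, inducing a canonical homeomorphism $\bdry H\cong\bdry G$. Given $[\phi\from H\to K]\in\Comm(G)$, the isomorphism $\phi$ is a quasi-isometry between the hyperbolic groups $H$ and $K$, hence extends to a homeomorphism $\hat\phi\from\bdry H\to\bdry K$; transporting through the canonical identifications defines $\Theta([\phi])\in\Homeo(\bdry G)$. Two representatives of a commensuration class agree on a common finite-index subgroup, and the boundary extension of a quasi-isometry is unchanged by bounded perturbation and by pre- or post-composition with the canonical inclusions, so $\Theta$ is well defined; functoriality of boundary extension makes it a homomorphism. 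The boundary extension is equivariant, $\hat\phi(h\cdot\xi)=\phi(h)\cdot\hat\phi(\xi)$ for $h\in H$ and $\xi\in\bdry H$, where $\phi(h)\in K\leq G$ acts through the $G$-action on $\bdry G$. If $\Theta([\phi])=\id$ this forces $h\cdot\xi=\phi(h)\cdot\xi$ for all $h$ and all $\xi$, and since $G$ is torsion-free and non-elementary its action on $\bdry G$ is faithful, so $\phi=\id_H$ and $[\phi]$ is trivial; thus $\Theta$ is injective. The same equivariance gives $\Theta([\phi])\,H\,\Theta([\phi])^{-1}=K$ inside $\Homeo(\bdry G)$, and because $H,K$ have finite index in $G$ this shows $\Theta([\phi])\in\Comm_{\Homeo(\bdry G)}(G)$.

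For surjectivity, given $f\in\Comm_{\Homeo(\bdry G)}(G)$ put $L=fGf^{-1}\cap G$, $K=L$, and $H=f^{-1}Lf$; both are finite-index subgroups of $G$, and $\conj{f}\from H\to K$, $h\mapsto fhf^{-1}$, is an isomorphism, hence a class in $\Comm(G)$. The point is that $\Theta([\conj{f}])=f$. Indeed $f$ and the boundary extension $\widehat{\conj{f}}$ are both homeomorphisms of $\bdry G$ intertwining the $H$-action with the $K$-action via $\conj{f}$ --- for $f$ this is the tautology $f(h\cdot\xi)=(fhf^{-1})\cdot f(\xi)$ --- so $\widehat{\conj{f}}^{-1}\circ f$ centralizes the image of $H$ in $\Homeo(\bdry G)$; such a homeomorphism fixes the attracting and repelling fixed points of every infinite-order element of $H$, a dense subset of $\bdry G$ since $H$ is non-elementary, and therefore is the identity. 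Hence $\Theta$ is an isomorphism onto $\Comm_{\Homeo(\bdry G)}(G)$. Finally, unwinding the definitions of $Q'$ and of the boundary homomorphism shows that $Q'([\phi])$ is the quasi-isometry class of $\phi$ composed with the canonical inclusions and that its boundary homeomorphism is exactly $\hat\phi$, so $\Theta$ equals the boundary homomorphism precomposed with $Q'$; combined with the isomorphism $C$ this realizes all three groups as identified through $Q$.

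I expect the main obstacle to be the surjectivity step --- precisely, the claim that the tautological conjugation commensuration $\conj{f}$ recovers $f$ as its boundary extension. This rests on triviality of the centralizer of a non-elementary subgroup of $G$ inside $\Homeo(\bdry G)$, which in turn needs density of fixed-point pairs together with faithfulness of the action; these are standard for torsion-free non-elementary hyperbolic groups but worth citing or proving carefully, since in the presence of torsion the boundary action has a finite kernel and the statement requires modification. The remaining friction is bookkeeping: pinning down the canonical identifications $\bdry H\cong\bdry G$ for finite-index subgroups and checking coarse invariance of boundary extensions, routine but worth stating precisely so that $\Theta$ and its compatibility with $Q$ are unambiguous.
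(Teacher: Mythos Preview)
Your proposal is correct and follows essentially the same route as the paper: the paper packages the second isomorphism as a standalone result (\RefThm{hyp-boundary-comm}), proving injectivity by composing the known injections $\Comm(G)\hookrightarrow\QI(G)\hookrightarrow\Homeo(\bdry G)$ and surjectivity by exactly your conjugation-inverse $\Psi(f)=[\conj{f}]$ together with density of attracting fixed points, then deduces the corollary in one line from \RefThm{ptd-homotopy-comm}, \RefThm{qi-map-factors}, and \RefThm{hyp-boundary-comm}. The only cosmetic differences are that the paper obtains injectivity and the image containment by citing Whyte's result and \RefLem{universal-comm} rather than your direct equivariance argument, but the substantive step you flag as the main obstacle---recovering $f$ from $\conj{f}$ via triviality of the centralizer---is argued identically.
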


\subsection{Relationship with known commensurator computations} \RefCor{sol-hyp-bdry} fits into a familiar scheme of understanding $\Comm(G)$ by finding a map $\Comm(G) \to \Aut(X)$ for some object $X$ on which $G$ acts faithfully, then proving the map is an isomorphism onto the relative commensurator $\Comm_{\Aut(X)}(G)$.  When $G$ is a sufficiently nice branch group acting on a tree $T$, such as Grigorchuk's group of intermediate growth, R\"over \cite{rover} gave an isomorphism $\Comm(G) \cong \Comm_{\Homeo(\bdry T)}(G)$. When $G = F$ is Thompson's group, Burillo, Cleary, and R\"over \cite{burillo-cleary-rover} identified an isomorphism $\Comm(F) \cong \Comm_{\operatorname{PL}_2^\pm(\mathbb{R})}(F)$, where $\operatorname{PL}_2^\pm(\mathbb{R})$ is a certain group of piecewise linear homeomorphisms of $\mathbb{R}$. When $G = \Mod(\Sigma)$ is the extended mapping class group of a closed surface of genus $g \geq 3$ and $C(\Sigma)$ is the curve complex of $\Sigma$, Ivanov \cite{ivanov} gave an isomorphism $\Comm( \Mod(\Sigma) ) \cong \Comm_{\Aut(C(\Sigma))}( \Mod(\Sigma) ) $. This has been generalized to other subgroups of mapping class groups, notably in recent work of Brendle and Margalit \cite{brendle-margalit}.

In each of these cases, structure of $\Aut(X)$ has allowed for more information about $\Comm(G)$ to be determined. Obtaining the strongest possible conclusion, Ivanov proved a rigidity statement $\Aut(C(\Sigma)) \cong \Mod(\Sigma)$, which has the consequence that the natural map $\Mod(\Sigma)\to \Comm( \Mod(\Sigma) )$ is an isomorphism.

\subsection{Bibliographic remarks}\label{sec:bibliographic-remarks}

Solenoids are well-studied objects in classical topology, as a rich source of examples and counterexamples, as well as in dynamics, where they arise naturally in the study of foliations and actions on Cantor sets. Their appearance in dynamics goes back to work of Smale \cite{smale} and Williams \cite{williams}, who studied solenoids arising as the inverse limit of iterated self-immersions of branched manifolds. 

McCord's work, summarized in \RefThm{metric-model-new} in \RefSec{full-solenoids}, identifies $\sol{X}$ with the suspension of the action of $\pi_1(X,\ast)$ on its profinite completion by right translation. This is an example of a minimal, equicontinuous group action on a Cantor set. From this point of view, the full solenoid is an example of a foliated space called a matchbox manifold. Such dynamical systems have been studied systematically by Clark, Dyer, Hurder, and Lukina~\cites{clark-hurder-lukina-bing, clark-hurder-lukina-classification, dyer-hurder-lukina-cantor}.

The structure of $\sol{X}$ as a principal bundle over $X$, which is used in the proof of \RefThm{ptd-homotopy-comm}, has been further studied by Clark and Fokkink \cite{clark-fokkink-bihomogeneity} in the case that $X$ is a closed manifold. Weak solenoids over closed manifolds were shown by Fokkink and Oversteegen to be fiber bundles with profinite group fiber \cite{fokkink-oversteegen-homogeneous}*{\S7}.

\subsection*{Acknowledgements}

We thank Matt Brin, Benson Farb, Ross Geoghegan, and Olga Lukina for enlightening discussions regarding commensurators and solenoids. We are grateful to James Belk and Bradley Forrest for describing the inverse systems approach to solenoids and commensurators appearing in Sections \ref{sec:abcomm} and \ref{sec:shape-of-solenoids}. Steven Hurder provided many helpful conversations, comments on an early draft of this article, and great hospitality.

\section{Inverse systems and pro-categories}
\LabSec{inverse-systems}

Solenoids are by definition inverse limits of certain topological spaces. In order to relate their topology to the abstract commensurator we start by recalling the relevant categorical framework. 
We follow the language and conventions of Marde\v{s}i\'{c} and Segal \cite{mardesic-segal}.

Given a category $\mathcal{C}$ an \emph{inverse system} in $\mathcal{C}$ consists of a directed set $\Lambda$, a collection of objects $\{ X_\lambda\}_{\lambda\in\Lambda}$ and bonding morphisms $p_{\lambda\lambda'}\from X_{\lambda'}\to X_\lambda$ for each $\lambda \leq \lambda'$ satisfying $p_{\lambda\lambda} = \id_{X_\lambda}$ and $p_{\lambda\lambda'}\circ p_{\lambda'\lambda''} = p_{\lambda\lambda''}$ for all $\lambda \leq \lambda' \leq \lambda''$.
To ease the burden of notation, when referring to the entire system we will use boldface without indexing information when such information is not needed.

Given two inverse systems $(\{X_\lambda\},p_{\lambda\lambda'},\Lambda)$  and $(\{Y_\mu\}, q_{\mu\mu'}, M)$
a \emph{morphism} $\mathbf{f} = (f_\mu, \phi) \from \mathbf{X}\to\mathbf{Y}$ is a function $\phi\from M\to \Lambda$ and a collection of
morphisms $\{ f_\mu \from X_{\phi(\mu)}\to Y_\mu\}_{\mu\in M}$ that are compatible with the bonding morphisms. That is, for all $\mu \leq \mu' \in M$ there exists $\lambda \geq \phi(\mu),\phi(\mu')$ such
that 
\[f_\mu\circ p_{\phi(\mu)\lambda} = q_{\mu\mu'}\circ f_{\mu'} \circ p_{\phi(\mu')\lambda'}.\] Two
morphisms $(f_\mu,\phi), (g_\mu,\psi) \from \mathbf{X}\to\mathbf{Y}$ are \emph{equivalent} if for all
$\mu$ there exists $\lambda\geq \phi(\mu),\psi(\mu)$ such that
\[ f_\mu\circ p_{\phi(\mu)\lambda} = g_\mu\circ p_{\psi(\mu)\lambda}. \]

\begin{definition}\LabDef{procategory}
    A \emph{pro-category} over a category $\mathcal{C}$, denoted $\pro{\mathcal{C}}$, has as objects inverse systems
    in $\mathcal{C}$, and as morphisms equivalence classes of system morphisms. 
\end{definition}

Observe that there is a functor from $\mathcal{C}$ to pro-$\mathcal{C}$ that sends each object $X$ to the rudimentary system $(X)$ indexed by a singleton and that this allows us to treat $\mathcal{C}$ as a full subcategory of $\pro{\mathcal{C}}$. In the interests of easing notation we will typically omit the system notation from rudimentary systems. 

If $(\Lambda, \leq)$ is a directed set and $M \subseteq \Lambda$, say that $M$ is {\em cofinal} if, for every $\lambda \in \Lambda$, there is some $\mu \in M$ such that $\lambda\leq \mu$. Given any inverse system $\mathbf{X} = ( \{X_\lambda\}, p_{\lambda \lambda'}, \Lambda )$ in $\pro{\mathcal{C}}$ and directed subset $M \subseteq \Lambda$, then $\mathbf{X}^{M} = ( \{X_\mu\}, p_{\mu\mu'}, M )$ is an inverse system and there is a restriction map of inverse systems $\mathbf{i} \from \mathbf{X} \to \mathbf{X}^M$. 
\begin{lemma}[\cite{mardesic-segal}*{Ch I, \S1.1, Theorem 1}]
\LabLem{lem:cofinal-system}
If $\mathbf{X} = ( \{X_\lambda\}, p_{\lambda \lambda'}, \Lambda )$ is an inverse system in $\mathcal{C}$ and $M \subseteq \Lambda$ is cofinal, then the restriction $\mathbf{i}\from \mathbf{X} \to \mathbf{X}^M$ is an isomorphism in $\pro{\mathcal{C}}$.
\end{lemma}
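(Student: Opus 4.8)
The plan is to construct an explicit two-sided inverse to $\mathbf{i}$ in $\pro{\mathcal{C}}$ and verify that both composites are equivalent to identity morphisms. First I would note that $M$, being cofinal in the directed set $\Lambda$, is itself directed in the induced order: given $\mu_1,\mu_2\in M$, choose $\lambda\in\Lambda$ above both and then $\mu\in M$ above $\lambda$. Hence $\mathbf{X}^M$ is a genuine inverse system and the restriction $\mathbf{i}=(\id_{X_\mu},\iota)$, where $\iota\from M\hookrightarrow\Lambda$ is the inclusion, is a morphism of inverse systems.

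Next I would build the candidate inverse $\mathbf{j}=(j_\lambda,\psi)\from\mathbf{X}^M\to\mathbf{X}$. Using cofinality and the axiom of choice, pick for each $\lambda\in\Lambda$ an index $\psi(\lambda)\in M$ with $\lambda\leq\psi(\lambda)$, and set $j_\lambda=p_{\lambda\psi(\lambda)}\from X_{\psi(\lambda)}\to X_\lambda$. To see that $\mathbf{j}$ is a system morphism, take $\lambda\leq\lambda'$ in $\Lambda$ and choose $\mu\in M$ above both $\psi(\lambda)$ and $\psi(\lambda')$; then both $j_\lambda\circ p_{\psi(\lambda)\mu}$ and $p_{\lambda\lambda'}\circ j_{\lambda'}\circ p_{\psi(\lambda')\mu}$ reduce to $p_{\lambda\mu}$ by transitivity of the bonding morphisms, which is exactly the required compatibility.

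It then remains to check $\mathbf{j}\circ\mathbf{i}\sim\id_{\mathbf{X}}$ and $\mathbf{i}\circ\mathbf{j}\sim\id_{\mathbf{X}^M}$. Unwinding the composition of pro-morphisms, $\mathbf{j}\circ\mathbf{i}$ has index map $\lambda\mapsto\psi(\lambda)$ with components $p_{\lambda\psi(\lambda)}$, and likewise $\mathbf{i}\circ\mathbf{j}$ has index map $\mu\mapsto\psi(\mu)$ on $M$ with components $p_{\mu\psi(\mu)}$. In each case, to verify equivalence with the identity morphism one must produce, for every index, a common upper bound witnessing the defining equation; taking that witness to be $\psi(\lambda)$ (respectively $\psi(\mu)$) collapses the required identity to $p_{\lambda\psi(\lambda)}\circ p_{\psi(\lambda)\psi(\lambda)}=p_{\lambda\psi(\lambda)}$, which holds since $p_{\psi(\lambda)\psi(\lambda)}=\id$. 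This yields the isomorphism.

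The argument is entirely formal diagram-chasing, and there is no real obstacle; the only place demanding attention is keeping the directions of the index functions straight and remembering that morphism equivalence permits post-composition with further bonding maps — it is precisely this looseness in the definition that allows the one-directional bonding morphisms $p_{\lambda\psi(\lambda)}$ to serve as an inverse despite the absence of maps going the other way. It is also worth remarking that a different choice of $\psi$ produces a morphism $\mathbf{j}$ in the same equivalence class, so the inverse is well defined in $\pro{\mathcal{C}}$.
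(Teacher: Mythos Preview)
Your argument is correct and is essentially the standard proof; note, however, that the paper does not supply its own proof of this lemma but simply cites it from Marde\v{s}i\'{c}--Segal, so there is no in-paper argument to compare against. Your construction of $\mathbf{j}$ via a choice function $\psi$ and the verification that both composites collapse to bonding maps, hence are equivalent to the identity, is exactly the proof one finds in the cited reference.
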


A directed set $(\Lambda, \leq)$ is {\em cofinite} if, for each $\lambda \in \Lambda$, the set $\set{ \lambda' \in \Lambda \suchthat \lambda' \leq \lambda}$ is finite. All
directed sets we consider are cofinite.
\begin{lemma}[\cite{mardesic-segal}*{Ch I, \S1.2, Lemma 2}]
\LabLem{lem:cofinite-morphisms}
Suppose $\mathbf{X} = (\{X_\lambda\},p_{\lambda\lambda'},\Lambda)$  and $\mathbf{Y} = (\{Y_\mu\}, q_{\mu\mu'}, M)$ are inverse systems in $\mathcal{C}$ and $\mathbf{f} = (f_\mu, \phi) \from \mathbf{X} \to \mathbf{Y}$ is a morphism of inverse systems. If $M$ is cofinite, then $\mathbf{f}$ is equivalent to a morphism of systems $(g_\mu, \psi) \from \mathbf{X} \to \mathbf{Y}$ such that $\psi \from M \to \Lambda$ increases and for any $\mu \leq \mu'$ the following diagram commutes:
    \begin{center}
    \begin{tikzcd}
    X_{\psi(\mu')}\ar{r}{g_{\mu'}}\ar[swap]{d}{p_{\psi(\mu)\psi(\mu')}} & Y_{\mu'} \ar{d}{q_{\mu\mu'}} \\
    X_{\psi(\mu)}\ar{r}{g_\mu} & Y\mu
    \end{tikzcd}
    \end{center}
\end{lemma}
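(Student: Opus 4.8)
The plan is to construct the reindexing function $\psi$ and the maps $g_\mu$ by recursion on a rank function on $M$ that exists precisely because $M$ is cofinite. For $\nu \in M$ set $P(\nu) = \set{\mu \in M \suchthat \mu \leq \nu \text{ and } \mu \neq \nu}$; cofiniteness makes each $P(\nu)$ finite, so the length $r(\nu)$ of a longest strictly ascending chain in $M$ with greatest element $\nu$ is a well-defined non-negative integer, and $r(\mu) < r(\nu)$ for every $\mu \in P(\nu)$. I will define $\psi(\nu) \in \Lambda$, by recursion on $r(\nu)$, so as to maintain three invariants: (i) $\phi(\nu) \leq \psi(\nu)$; (ii) $\psi(\mu) \leq \psi(\nu)$ for every $\mu \in P(\nu)$; and (iii) for every $\mu \in P(\nu)$ the square with vertical arrows $p_{\psi(\mu)\psi(\nu)}$ and $q_{\mu\nu}$ commutes. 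Granted (i), I set $g_\nu = f_\nu \circ p_{\phi(\nu)\psi(\nu)}$. Invariant (ii) is exactly the assertion that $\psi$ increases (every comparable pair $\mu \leq \mu'$ in $M$ with $\mu \neq \mu'$ has $\mu \in P(\mu')$, and the diagonal pairs are trivial), and (iii) for these pairs is the commuting square in the statement; so establishing (i)--(iii) proves the lemma, modulo checking that $(g_\mu,\psi)$ is a morphism of systems equivalent to $\mathbf{f}$, which I address at the end.

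For the recursion I first record, for each comparable pair $\mu \leq \nu$ in $M$, an index $\lambda_{\mu\nu} \in \Lambda$ with $\lambda_{\mu\nu} \geq \phi(\mu), \phi(\nu)$ and $f_\mu \circ p_{\phi(\mu)\lambda_{\mu\nu}} = q_{\mu\nu}\circ f_\nu \circ p_{\phi(\nu)\lambda_{\mu\nu}}$; such $\lambda_{\mu\nu}$ exists because $\mathbf{f}$ is a morphism of inverse systems. Now suppose $\psi$ has been defined on all elements of rank less than $n$ so that (i)--(iii) hold, and let $r(\nu) = n$. The set $\set{\phi(\nu)} \cup \set{\psi(\mu) \suchthat \mu \in P(\nu)} \cup \set{\lambda_{\mu\nu} \suchthat \mu \in P(\nu)}$ is finite because $P(\nu)$ is finite, so by directedness of $\Lambda$ it has an upper bound; choose $\psi(\nu)$ to be one such. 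Invariants (i) and (ii) are then immediate. For (iii), given $\mu \in P(\nu)$, composing the defining identity of $\lambda_{\mu\nu}$ on the right with $p_{\lambda_{\mu\nu}\psi(\nu)}$ and applying the bonding relation $p_{\lambda\lambda'}\circ p_{\lambda'\lambda''} = p_{\lambda\lambda''}$ rewrites the two sides as $g_\mu \circ p_{\psi(\mu)\psi(\nu)}$ and $q_{\mu\nu}\circ g_\nu$ respectively; here the composites make sense because $\phi(\mu) \leq \psi(\mu) \leq \psi(\nu)$, $\phi(\mu) \leq \lambda_{\mu\nu} \leq \psi(\nu)$, and $\phi(\nu) \leq \lambda_{\mu\nu} \leq \psi(\nu)$.

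Finally, $(g_\mu,\psi)$ is a morphism of inverse systems: for $\mu \leq \mu'$ the required compatibility holds with the witnessing index taken to be $\psi(\mu')$, using $p_{\psi(\mu')\psi(\mu')} = \id$, the fact that $\psi$ increases, and invariant (iii). And $(g_\mu,\psi)$ is equivalent to $(f_\mu,\phi)$: for each $\mu$, taking the witnessing index $\psi(\mu)$ one has $g_\mu \circ p_{\psi(\mu)\psi(\mu)} = g_\mu = f_\mu \circ p_{\phi(\mu)\psi(\mu)}$ straight from the definition of $g_\mu$. I do not expect a genuine obstacle here: the proof is entirely bookkeeping, and the single essential use of the cofiniteness hypothesis is that it collapses the a priori infinite family of compatibility constraints at each element of $M$ to a finite one, after which directedness of $\Lambda$ supplies the needed upper bound at each stage of the recursion. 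Dropping cofiniteness would require choosing, all at once, an index above infinitely many others, which a general directed set need not admit.
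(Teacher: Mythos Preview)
The paper does not prove this lemma; it is quoted from Marde\v{s}i\'{c}--Segal with a citation and no argument. Your proof is correct and is essentially the standard one found in that reference: recursion on the (finite) height of each $\nu\in M$, at each stage choosing $\psi(\nu)$ as an upper bound for the finitely many indices $\phi(\nu)$, $\psi(\mu)$, and $\lambda_{\mu\nu}$ already in play, then setting $g_\nu = f_\nu\circ p_{\phi(\nu)\psi(\nu)}$. The verification of invariant~(iii) and of equivalence with $\mathbf{f}$ are carried out cleanly.
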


We use the following notation for categories. $\Top$ is the category of topological spaces and continuous maps, and $\Cpt$ is the category of compact Hausdorff spaces and continuous maps.  $\HTop$ is the category whose objects are topological spaces and whose morphisms are homotopy classes of continuous maps. $\HCpt$ is defined analogously.  We use $\Top_\ast$, $\Cpt_\ast$, $\HTop_\ast$, and $\HCpt_\ast$ to denote the respective pointed categories, in which each space is equipped with a basepoint, and all continuous maps and homotopies preserve basepoints. $\Grp$ is the category of groups with homomorphisms. The notation $\mathcal{C}(A, B)$ is used for the set of $\mathcal{C}$ morphisms from $A$ to $B$.

\section{Abstract commensurators} \LabSec{abcomm}

Write $H \fidx G$ to mean that $H$ is a finite-index subgroup of a group $G$, and $H \nfidx G$ to mean moreover that $H$ is normal. 
A \emph{partial automorphism} of a group $G$ is an isomorphism between two finite index subgroups of $G$. If
$\phi_1 \from H_1 \to K_1$ and $\phi_2\from H_2 \to K_2$ are partial
automorphisms declare $\phi_1$ \emph{equivalent} to $\phi_2$, denoted $\sim$,
if there exists $H_3 \fidx H_1\cap H_2$ such that $\phi_1$ and $\phi_2$ agree
on $H_3$.

The \emph{abstract commensurator} of a group $G$ is the group 
\[ \Comm(G) = \set{ \phi\from H \stackrel{\sim}{\to} K \suchthat H, K \fidx G}/\sim \]
of equivalence classes of partial automorphisms. 
$\Comm(G)$ is a group under the rule $[\phi] \circ [\psi] = [\phi' \circ \psi']$ for any $\phi \sim \phi'$ and $\psi \sim \psi'$ such that $\phi' \circ \psi'$ is defined.

For any subgroup $H\leq G$, the {\em relative commensurator} of $H$ in $G$ is defined by
\[
\Comm_G(H) = \{ g\in G \mid [H : g H g^{-1} \cap
H] < \infty \text{ and } [g H g^{-1} : g H g^{-1} \cap
H] <\infty\}.
\]
For any group $G$, there is a homomorphism $G\to \Aut(G)$ defined by $g\mapsto \conj{g}$, where $\conj{g}(x) = g x g^{-1}$ for all $x\in G$. Let $\commincl: G\to \Comm(G)$ denote the composition $G\to \Aut(G) \to \Comm(G)$. The proof of the following lemma is straightforward.
\begin{lemma}
\LabLem{universal-comm} For any group $G$,
\[
\Comm_{\Comm(G)}(\commincl(G)) = \Comm(G).
\]
\end{lemma}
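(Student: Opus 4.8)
The plan is to show that every element $[\phi] \in \Comm(G)$ commensurates the subgroup $\commincl(G) \leq \Comm(G)$; one inclusion is automatic, so the real content is $\Comm(G) \subseteq \Comm_{\Comm(G)}(\commincl(G))$. Fix a partial automorphism $\phi \from H \to K$ with $H, K \fidx G$, representing a class $[\phi]$. I want to exhibit a finite-index subgroup of $\commincl(G)$ that is conjugated into $\commincl(G)$ by $[\phi]$, and symmetrically for $[\phi]^{-1}$. The natural candidate is $\commincl(H)$, which has finite index in $\commincl(G)$ because the kernel of $\commincl$ is the (finite-index-insensitive) center-type obstruction — more carefully, $\commincl(H)$ is the image of a finite-index subgroup, hence has finite index in $\commincl(G)$ regardless of the kernel, since $\commincl(G)/\commincl(H)$ is a quotient of $G/H$.

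The key computation is that for $h \in H$, the conjugate $[\phi] \circ \commincl(h) \circ [\phi]^{-1}$ equals $\commincl(\phi(h))$ in $\Comm(G)$. To see this, note $[\phi]^{-1} = [\phi^{-1}]$ where $\phi^{-1}\from K \to H$, and $\commincl(h) = [c_h]$ where $c_h$ is conjugation by $h$ defined on all of $G$, or equivalently (up to $\sim$) on any finite-index subgroup. Composing the three partial automorphisms: on a suitable finite-index subgroup $L \leq K$ with $\phi^{-1}(L) \leq H$ and $h \phi^{-1}(L) h^{-1} \leq H$ and $\phi(h \phi^{-1}(L) h^{-1}) \leq K$ — such $L$ exists because all the relevant subgroups have finite index — we compute for $x \in L$ that $\phi(h \cdot \phi^{-1}(x) \cdot h^{-1}) = \phi(h)\,\phi(\phi^{-1}(x))\,\phi(h^{-1}) = \phi(h)\, x\, \phi(h)^{-1}$, using only that $\phi$ is a homomorphism on its domain. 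Thus $[\phi]\,\commincl(h)\,[\phi]^{-1}$ and $\commincl(\phi(h))$ agree on the finite-index subgroup $L$, so they are equal in $\Comm(G)$.

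Consequently $[\phi]\, \commincl(H)\, [\phi]^{-1} = \commincl(\phi(H)) = \commincl(K)$, which has finite index in $\commincl(G)$ by the same quotient-of-$G/K$ argument. Therefore $[\phi]\,\commincl(G)\,[\phi]^{-1} \cap \commincl(G)$ contains $\commincl(K)$ and has finite index in both $\commincl(G)$ and $[\phi]\,\commincl(G)\,[\phi]^{-1}$, witnessing $[\phi] \in \Comm_{\Comm(G)}(\commincl(G))$. Since $[\phi]$ was arbitrary, $\Comm(G) \subseteq \Comm_{\Comm(G)}(\commincl(G))$, and the reverse inclusion holds because the relative commensurator is always a subgroup of the ambient group.

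The only mildly delicate point — and the one I would write out with care rather than the formula-chasing above — is the bookkeeping of domains: ensuring that a single finite-index $L \leq G$ can be chosen on which all the compositions $[\phi] \circ \commincl(h) \circ [\phi]^{-1}$ (as $h$ ranges over a generating set of $H$, or uniformly) are simultaneously defined and the displayed identity holds. This is a routine finite-index intersection argument but is the one place the definition of $\sim$ and of composition in $\Comm(G)$ must be invoked precisely.
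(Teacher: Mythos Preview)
Your proof is correct. The paper does not actually give a proof of this lemma; it simply records the statement and remarks that the proof is straightforward. Your argument is exactly the natural one: the identity $[\phi]\,\commincl(h)\,[\phi]^{-1} = \commincl(\phi(h))$ for $h\in H$ immediately gives $[\phi]\,\commincl(H)\,[\phi]^{-1} = \commincl(K)$, and both $\commincl(H)$ and $\commincl(K)$ have finite index in $\commincl(G)$.

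One small simplification: you worry more than necessary about domains in the key computation. Since $h\in H$ and $H$ is a subgroup, $h\,\phi^{-1}(x)\,h^{-1}\in H$ automatically for every $x\in K$, so the composite $\phi\circ c_h\circ\phi^{-1}$ is already defined on all of $K$ without passing to a smaller $L$. The ``mildly delicate point'' you flag is therefore not actually delicate here, and no finite-index intersection argument is needed.
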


\subsection{Commensurations as pro-automorphisms}

For any abstract group $G$, let $\ficol{G}$ denote the collection of finite-index subgroups of $G$, equipped with inclusions $G_1\to G_2$ whenever $G_1\fidx G_2 \fidx G$. The intersection of any two finite-index subgroups has finite index in $G$, so $\ficol{G}$ is the collection of groups of an inverse system $\bfG$ in the category $\Grp$ when ordered by reverse inclusion.

\begin{proposition}
\LabProp{comm-is-pro-aut}
    Let $G$ be a group and $\bfG$ be the inverse system of finite-index subgroups of $G$. Then there is an isomorphism
    \[
    \zeta: \Comm(G) \cong \Aut_{\pro{\Grp}}(\bfG).
    \]
\end{proposition}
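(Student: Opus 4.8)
The plan is to construct $\zeta$ and its inverse explicitly and check they are mutually inverse group homomorphisms. Given a partial automorphism $\phi\from H\to K$ with $H,K\fidx G$, I want to produce an automorphism of the system $\bfG$. The idea is that $\phi$ induces a bijection on the ``germ'' of finite-index subgroups: for any $G_1\fidx G$, the subgroup $\phi(G_1\cap H)$ is finite-index in $G$, and this assignment is a self-map of $\ficol{G}$ that reverses containment-type relations appropriately, hence gives a morphism of inverse systems. To make this rigorous I would first pass, via \RefLem{lem:cofinal-system}, to the cofinal subsystem of finite-index subgroups of $H$ (which is cofinal in $\ficol{G}$ since every finite-index subgroup of $G$ contains a finite-index subgroup of $H$, indeed $G_1\cap H$). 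On that cofinal subsystem $\phi$ restricts to an honest isomorphism of inverse systems onto the subsystem of finite-index subgroups of $K$, which is again cofinal; composing with the two cofinal isomorphisms of \RefLem{lem:cofinal-system} produces an element of $\Aut_{\pro{\Grp}}(\bfG)$. Define $\zeta([\phi])$ to be this class.

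The first thing to verify is that $\zeta$ is well defined: if $\phi\sim\phi'$, they agree on some $H_3\fidx H_1\cap H_2$, and the corresponding system morphisms agree after restriction to the cofinal subsystem indexed by finite-index subgroups of $H_3$ — this is exactly the equivalence relation on $\pro{\Grp}$ morphisms. Functoriality of the cofinal-restriction isomorphisms then gives that $\zeta$ is a homomorphism: composition of partial automorphisms, which is defined by restricting to a common finite-index domain, matches composition of the induced system automorphisms after passing to a common cofinal subsystem. For injectivity, if $\zeta([\phi])$ is the identity of $\pro{\Grp}$, then the system morphism induced by $\phi$ equals the identity morphism after restriction to some cofinal subsystem, i.e. $\phi$ agrees with the identity inclusion on some finite-index subgroup, so $[\phi]=[\id]=1$ in $\Comm(G)$.

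Surjectivity is where the real work is, and I expect it to be the main obstacle. Given $\alpha\in\Aut_{\pro{\Grp}}(\bfG)$, represented by a system morphism $(f_\mu,\psi)$, I want to extract a single partial automorphism of $G$. Here I would invoke \RefLem{lem:cofinite-morphisms}: since $\ficol{G}$ is cofinite (each finite-index subgroup contains only finitely many finite-index subgroups of $G$... actually this needs a word — it's cofinite when $G$ is finitely generated, or one restricts to the cofinal cofinite subsystem of normal finite-index subgroups, which always works by Poincaré's theorem bounding the number of subgroups of a given index), I may replace $(f_\mu,\psi)$ by an equivalent morphism with $\psi$ monotone and all squares strictly commuting. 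A strictly commuting system automorphism of $\bfG$ is then a coherent family of injections $f_\mu\from G_{\psi(\mu)}\hookrightarrow G_\mu$; coherence forces these to be compatible, and since $\alpha$ is invertible the analogous data for $\alpha^{-1}$ composes with it to the identity, which pins down a single isomorphism between two finite-index subgroups of $G$ (take $\mu_0$ with $G_{\mu_0}$ small enough that $f_{\mu_0}$ lands in the ``stable'' range, and let $H=G_{\psi(\mu_0)}$, $K=f_{\mu_0}(H)$). One checks $\phi=f_{\mu_0}$ restricted appropriately is a partial automorphism with $\zeta([\phi])=\alpha$, using again that two system morphisms agreeing on a cofinal subsystem are equivalent. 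The delicate points throughout are bookkeeping: ensuring the chosen cofinal subsystems are genuinely cofinal (which reduces to the elementary fact that finite-index subgroups are closed under finite intersection) and that ``agree on a finite-index subgroup'' on the $\Comm$ side lines up precisely with the $\pro{\Grp}$ equivalence relation on the system side.
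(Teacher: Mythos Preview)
Your construction of $\zeta$ via cofinal restriction to $\ficol{H}$ and $\ficol{K}$, and your arguments for well-definedness, the homomorphism property, and injectivity, are essentially the paper's proof. One minor correction: your cofiniteness worry is misplaced and stated backwards. Cofiniteness of $\ficol{G}$ under reverse inclusion asks that each $H\fidx G$ be \emph{contained in} only finitely many subgroups of $G$, not that it \emph{contain} finitely many. This holds for every group, since intermediate subgroups $H\leq K\leq G$ correspond to subgroups of the finite quotient $G/\operatorname{core}_G(H)$; no appeal to finite generation or to normal subgroups is needed.

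The substantive gap is in surjectivity. After passing to a strictly commuting representative $(f_\mu,\psi)$ you assert this is ``a coherent family of injections $f_\mu\from G_{\psi(\mu)}\hookrightarrow G_\mu$'', but strict commutativity with injective bonding maps does not force the components $f_\mu$ themselves to be injective, and your ``stable range'' producing a finite-index image is left unjustified. Your instinct to use the inverse of $\alpha$ is correct---unwinding $\alpha^{-1}\alpha\sim\id$ and $\alpha\alpha^{-1}\sim\id$ at well-chosen indices does eventually yield both injectivity on a finite-index domain and finite-index image---but you have not carried this out, and the bookkeeping is not entirely trivial since the two relations a priori give information about different components $f_\mu$. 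The paper avoids this altogether by invoking the Marde\v{s}i\'{c}--Segal characterizations of monomorphisms and epimorphisms in $\pro{\Grp}$: a pro-isomorphism is both mono and epi \cite{mardesic-segal}*{Ch~II, \S2.2, Theorem~6}; being mono gives an index $\lambda_0$ and $H_0\fidx G_{\rho(\lambda_0)}$ with $f_{\lambda_0}|_{H_0}$ injective \cite{mardesic-segal}*{Ch~II, \S2.1, Theorem~2}; being epi gives a finite-index subgroup inside the image \cite{mardesic-segal}*{Ch~II, \S2.1, Theorem~4}. That is the missing ingredient in your argument.
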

\begin{proof}
For any $H\fidx G$, the system of finite-index subgroups of $H$ is cofinal in the system of finite-index subgroups of $G$. By \RefLem{lem:cofinal-system}, the inverse of the restriction map is a $\pro{\Grp}$ isomorphism $i_{GH} : \mathbf{H} \to \mathbf{G}$. These isomorphisms are functorial; if $H_1\fidx H_2\fidx G$ then $i_{G H} = i_{G H_2} \circ i_{H_2 H_1}$.

Suppose $H,K\fidx G$ and $\phi: H\to K$ is an isomorphism. The map $\phi$ induces a bijection between $\ficol{H}$ and $\ficol{K}$. The collection of isomorphisms $\left\{\restr{\phi}{H'}  \suchthat H' \fidx H \right\}$ is a morphism of inverse systems, 
which induces a $\pro{\Grp}$ isomorphism $\phi_\ast : \mathbf{H} \to \mathbf{K}$.

It is an exercise to verify two properties. First, the assignment $\phi\mapsto \phi_*$ is functorial in the sense that $(\phi\circ \psi)_* = \phi_* \circ \psi_*$ whenever the composition $\phi\circ \psi$ is defined. Second, if $\psi: H' \to K'$ is the restriction of some isomorphism $\phi: H\to K$ to a domain $H' \fidx H$, then $\phi_* = i_{K K'} \circ \psi_* \circ i_{H H'}^{-1}$.

We now define $\zeta$. Given any $[\phi]\in \Comm(G)$ represented by an isomorphism $\phi: H\to K$, let $\zeta([\phi]) = i_{G K} \circ \phi_\ast \circ i_{G H}^{-1}$. To see that $\zeta$ is well-defined, suppose $\phi:H\to K$ is an isomorphism and $\psi: H' \to K'$ is the restriction of $\phi$ to some $H'\fidx H$. Then by the above observations we have
\begin{align*}
i_{G K'} \circ \psi_\ast \circ i_{G H'}^{-1} &= (i_{G K} \circ i_{K K'}) \circ \psi_\ast \circ (i_{G H} \circ i_{H H'})^{-1} \\
&= i_{G K} \circ (i_{K K'} \circ \psi_* \circ i_{H H'}^{-1}) \circ i_{G H}^{-1}\\
&= i_{G K} \circ \phi_* \circ i_{G H}^{-1}
\end{align*}
The fact that $\zeta$ is a homomorphism follows from functoriality of the assignment $\phi \mapsto \phi_*$.

To see that $\zeta$ is injective, suppose a partial automorphism $\phi \from H \to K$ satisfies $\zeta( [\phi] ) = \mathbf{id} \in \Aut_{\pro{\Grp}}(\bfG)$. This implies that $\phi_\ast \circ i_{G H}^{-1}$  and $i_{G K}^{-1}$ are equivalent maps of inverse systems $\mathbf{G} \to \mathbf{K}$. Because $i_{G H}^{-1}$ and $i_{G K}^{-1}$ are restriction maps, by definition of equivalence there is a finite-index subgroup $H_0 \fidx H$ such that $\restr{\phi}{H_0} = \id_{H_0}$. Therefore $[\phi]$ is trivial in $\Comm(G)$.

To see that $\zeta$ is surjective, suppose $[\mathbf{f}] \in \Aut_{\pro{\Grp}} (\bfG)$ is represented by a collection of homomorphisms $\{ f_{\lambda} : G_{\rho(\lambda)} \to G_\lambda \mid G_\lambda \in \ficol{G}\}$, where $\rho : \Lambda \to \Lambda$ is a function of the indexing set of $\ficol{G}$. Because $[\mathbf{f}]$ is an isomorphism, it is both a monomorphism and an epimorphism \cite{mardesic-segal}*{Ch II, \S2.2, Theorem 6}. Because $[\mathbf{f}]$ is a monomorphism, there is an index $\lambda_0 \in \Lambda$ and a subgroup $H_0 \fidx G_{\rho(\lambda_0)}$ such that $\restr{f_{\lambda_0}}{H_0}$ is injective \cite{mardesic-segal}*{Ch II, \S2.1, Theorem 2}. Because $[\mathbf{f}]$ is an epimorphism, there is a finite-index subgroup $K_0 \fidx G$ such that $K_0\fidx f_{\lambda_0}(H_0)$ \cite{mardesic-segal}*{Ch II, \S2.1, Theorem 4}. Let $H = H_0$ and $K = f_{\lambda_0}(H)$. Then $f_0 = \restr{f_{\lambda_0}}{H} : H \to K$ is an isomorphism between finite-index subgroups of $G$.

It remains to check that $[i_{G K} \circ (f_0)_\ast \circ i_{G H}^{-1}] = [\mathbf{f}]$. Let $f' = i_{G K} \circ (f_0)_\ast \circ i_{G H}^{-1}$ be the endomorphism of the inverse system $\ficol{G}$, with associated indexing function $\rho' : \Lambda \to \Lambda$. By definition, for any $\lambda \in \Lambda$, we have $G_{\rho'(\lambda)} = f_{\lambda_0}^{-1}(G_\lambda\cap K)$ and $f'_\lambda = f_{\lambda_0}|_{G_{\rho'(\lambda)}}$.

Consider any $G_\lambda \in \ficol{G}$ and let $\lambda' \in \Lambda$ be the index such that $G_{\lambda'} = G_\lambda \cap K$. Because $G_{\lambda'}\fidx G_\lambda$, the fact that $f$ is a morphism of inverse systems implies that there is a subgroup $S_1 \fidx G$ such that $\restr{f_\lambda}{S_1} = \restr{f_{\lambda'}}{S_1}$. Similarly, because $G_{\lambda'} \leq G_{\lambda_0}$, there is some $S_2 \fidx G$ such that $\restr{f_{\lambda'}}{S_2} = \restr{f_{\lambda_0}}{S_2}$. Now let $D = S_1 \cap S_2 \cap G_{\rho'(\lambda)} \fidx G$. Combining the above observations we have $\restr{f_\lambda}{D} = \restr{f'_\lambda}{D}$, which completes the proof.
\end{proof}

\section{Full solenoids over unloopable spaces} \LabSec{full-solenoids}

The limit space of a sequence of finite-sheeted, regular covering maps appeared early in the study of homogeneous topological spaces as a source of examples and non-examples. McCord~\cite{mccord} gave a general account of the structure of such a space, not requiring coverings to be finite-sheeted, which he called a \emph{solenoidal space}. Elsewhere in the literature these spaces have simply been called \emph{solenoids}, while some authors reserve the name solenoid for inverse limits of systems of finite-sheeted
coverings of closed manifolds, or of only the circle (the original solenoids of van Dantzig and Vietoris~\cites{vanDantzig1930,vietoris}). Limits of sequences of finite-sheeted covers that are not necessarily regular are often called {\em weak solenoids}~\cite{rogers-tollefson}.
We are interested in the limit of the inverse system of \emph{all} finite-sheeted covers of a space, which we call the \emph{full solenoid} (see \RefDef{full-solenoid}). After introducing the definitions with care we review McCord's work in this setting.

\begin{definition}
	A topological space $X$ is \emph{unloopable} if it is path-connected, locally path-connected, and semi-locally simply connected.\footnote{McCord calls such spaces {\em nice} \cite{mccord}. We borrow our terminology from Bourbaki, which uses the term {\em d\`{e}la\c{c}ables}.}  Equivalently, $X$ is unloopable if it has a locally path-connected universal cover.
\end{definition}

We will typically work in categories whose objects are pointed topological spaces. When there is no chance for confusion, we will use the same symbol to denote the basepoints in different spaces.

\begin{convention} 
When working with an unloopable pointed topological space $(X,\ast)$, 
we fix a pointed universal cover $(\ucov{X},\ast) \to (X,\ast)$ once and
for all. There is a left action $\pi_1(X,\ast) \times \ucov{X} \to \ucov{X}$ written $(g,x)\mapsto gx$. We then realize the Galois correspondence between connected covers of $(X,\ast)$ and subgroups of its fundamental group by setting $(X_H, \ast) = H \backslash (\ucov{X},\ast)$ for a subgroup $H \leq \pi_1(X, \ast)$. We will 
refer to constructions as {\em canonical} if they are canonical for a given
choice of universal cover.
\end{convention}

\begin{definition}
\LabDef{full-solenoid}
	Given an unloopable pointed topological space $(X,\ast)$, the \emph{full solenoid} over
	$(X,\ast)$, denoted $(\sol{X},\ast)$, is the inverse limit of the system of connected pointed finite-sheeted
	covers of $(X,\ast)$ in $\Top_\ast$:
		\[ (\sol{X},\ast) = \varprojlim_{H \fidx \pi_1(X,\ast)} (X_H,\ast). \]
	The path-components of $(\sol{X}, \ast)$ are called \emph{leaves}.
\end{definition}

Given an unloopable pointed space $(X,\ast)$ with full solenoid $(\sol{X},\ast)$, let $(\mathbf{X}, \ast)$ denote the inverse system of finite-sheeted covers of $(X,\ast)$ with respect to a given universal cover. For any finite-index subgroup $H\fidx \pi_1(X,\ast)$ there is a projection map $p_H \from (\sol{X},\ast) \to (X_H,\ast)$. The collection of all such $p_H$ determines a map of inverse systems, hence a morphism $\mathbf{p} \from (\sol{X},\ast) \to (\mathbf{X}, \ast)$ in $\pro{\Top_*}$.

The construction of $(\sol{X},\ast)$ is independent of choice of universal cover in the following senses. First, any two universal covers of an unloopable pointed space $(X,\ast)$ are homeomorphic by a unique morphism of pointed covering spaces. Any such homeomorphism induces homeomorphisms between covering spaces $(X_H,\ast)$ compatible with the covering maps. It follows that $(\sol{X},\ast)$ is well-defined up to homeomorphism commuting with the projection $(\sol{X},\ast)\to (X,\ast)$.
Second, for a given choice of universal cover, the collection $\left\{ (X_H,\ast) \suchthat H\fidx \pi_1(X,\ast) \right\}$ contains exactly one representative from each equivalence class of connected, finite-sheeted pointed cover of $(X,\ast)$. Therefore the full solenoid $(\sol{X},\ast)$ is homeomorphic to the inverse limit of the collection of {\em all} connected, finite-sheeted pointed covers of $(X,\ast)$.

McCord \cite{mccord} described the structure of inverse limits of {\em sequences} of regular covers of an unloopable base space. The results of his original paper apply to inverse limits of general directed systems of regular covers of an unloopable base space with essentially no change to the arguments.
The system of finite-sheeted, regular covers of $(X, \ast)$ is cofinal in the system of finite-sheeted covers of $(X, \ast)$, so McCord's results apply to full solenoids over unloopable spaces.

\begin{definition}
Given an unloopable pointed topological space $(X,\ast)$, the \emph{baseleaf} of $(\sol{X},\ast)$ is the image of the canonical map $\blmap \from (\ucov{X},\ast) \to (\sol{X},\ast)$ induced by the covering maps $\ucov{X} \to X_H$ for $H\fidx \pi_1(X,\ast)$.
\end{definition}

For any unloopable pointed topological space $(X,\ast)$, the collection of finite quotients $\pi_1(X,\ast) / H$ for $H\nfidx \pi_1(X,\ast)$ forms an inverse system in $\Grp$ under the quotient maps $\pi_1(X,\ast) / H \to \pi_1(X,\ast) / K$ whenever $H\leq K$. The profinite completion is
	\[\profin{\pi_1(X,\ast)} = \varprojlim_{H\nfidx \pi_1(X,\ast)} \pi_1(X,\ast) / H.\]  
The actions $\pi_1(X,\ast)/H \curvearrowright X_H$ determine an effective topological group action $\profin{\pi_1(X,\ast)} \curvearrowright \sol{X}$ \cite{mccord}*{Lemma 5.2}. 

There is a continuous surjective map $\Pi \from \sol{\pi_1(X,\ast)} \times \ucov{X} \to \sol{X}$ defined by $\Pi(t,x) = t \blmap(x)$, where $\blmap$ is the canonical baseleaf map. There is a continuous left action of $\pi_1(X,\ast)$ on 
$\profin{\pi_1(X,\ast)} \times \ucov{X}$ defined by 
\[
g\cdot (\gamma, x) = (\gamma\pfi(g)^{-1}, gx),
\]
where $\pfi \from \pi_1(X,\ast) \to \profin{\pi_1(X,\ast)}$ is the canonical 
map. McCord proves that $\Pi \from \sol{\pi_1(X,\ast)} \times \ucov{X} \to \sol{X}$ is a (generalized) covering map whose covering transformations are precisely those homeomorphisms given by the action of $\pi_1(X,\ast)$. 

Let $\solmod{\pi_1(X,\ast)}{X}$ denote the quotient of $\sol{\pi_1(X,\ast)} \times \ucov{X}$ by the action of $\pi_1(X,\ast)$. We summarize the consequences of the above in the following theorem.

\begin{theorem}[\cite{mccord}*{Theorems 5.5, 5.6, 5.8, 5.12}]
\LabThm{metric-model-new}
	Suppose $(X,\ast)$ is an unloopable pointed topological space. 
	\begin{enumerate}
	\item There is a homeomorphism 
		\[ \left( \solmod{\pi_1(X,\ast)}{X}, [\id, \ast] \right) \cong (\sol{X}, \ast). \] 
	The canonical
	baseleaf map $\blmap \from \ucov{X} \to \sol{X}$ is the composition of $\ucov{X}
	\to \{ \id \} \times \ucov{X}$ with the quotient.
	\item Let $K = \blmap(\ucov{X})$ be the baseleaf. A subset of $\sol{X}$ is a path component if and only if it is of the form $\gamma K$ for some $\gamma \in \sol{\pi_1(X,\ast)}$.
	\item The baseleaf map $\blmap : \ucov{X} \to \sol{X}$ descends to a map $N\backslash \ucov{X} \to \sol{X}$ which is a bijection onto its image, where $N = \bigcap_{H\nfidx \pi_1(X,\ast)} H$ is the residual finiteness kernel of $\pi_1(X,\ast)$.
	\item The canonical projection $(\sol{X},\ast) \to (X,\ast)$ is a principal $\profin{\pi_1(X,\ast)}$-bundle.
	\end{enumerate}
\end{theorem}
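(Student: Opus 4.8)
The four assertions are McCord's \cite{mccord}, proved there for inverse limits of \emph{sequences} of regular covers; the plan is to check that none of his arguments use countability of the index set, so that the conclusions hold verbatim for the cofinal directed system of finite-sheeted regular covers of $(X,\ast)$. The organizing object is the map $\Pi \from \profin{\pi_1(X,\ast)} \times \ucov{X} \to \sol{X}$, $(t,x) \mapsto t\blmap(x)$. First I would show that $\Pi$ is a (generalized) covering projection whose deck group is $\pi_1(X,\ast)$ acting by $g\cdot(\gamma, x) = (\gamma\pfi(g)^{-1}, gx)$, and that this action is free because the covers $X_H \to X$ with $H \nfidx \pi_1(X,\ast)$ are regular; assertions (1)--(3) then follow from this. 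Assertion (4) is cleanest to obtain separately, directly from the local structure of covering maps.

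For (4): because $X$ is unloopable, cover it by path-connected open sets $U$ that are evenly covered by $\ucov{X}$ and for which $\pi_1(U) \to \pi_1(X,\ast)$ is trivial. Over such a $U$, each finite cover $X_H \to X$ restricts to a disjoint union of copies of $U$ naturally indexed by the coset space $H\backslash\pi_1(X,\ast)$, and for $H' \leq H$ the bonding map $X_{H'} \to X_H$ restricts on index sets to the projection $H'\backslash\pi_1(X,\ast) \to H\backslash\pi_1(X,\ast)$. Passing to the inverse limit, the part of $\sol{X}$ lying over $U$ is homeomorphic to $U \times \varprojlim_H (H\backslash\pi_1(X,\ast)) \cong U \times \profin{\pi_1(X,\ast)}$, equivariantly for the right $\profin{\pi_1(X,\ast)}$-action, and on overlaps the transition functions are locally constant with values in $\profin{\pi_1(X,\ast)}$ acting by right translation. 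This is the principal bundle structure; in particular the fibres are totally disconnected, which is what drives (2).

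For (1)--(3): the defining formula for $\Pi$ is constant on $\pi_1(X,\ast)$-orbits, so it descends to a continuous bijection $\bar\Pi \from \solmod{\pi_1(X,\ast)}{X} \to \sol{X}$---surjective because regular covers are cofinal, injective because a point of $\sol{X}$ records enough coset data to recover $x$ up to the deck action and then $\gamma$ by freeness of the $\profin{\pi_1(X,\ast)}$-action. To upgrade $\bar\Pi$ to a homeomorphism I would avoid any appeal to compactness of $\ucov{X}$ (which may fail even when $X$ is compact) and instead note that the source is a principal $\profin{\pi_1(X,\ast)}$-bundle over $X$ by the associated-bundle construction, the target is one by (4), and $\bar\Pi$ is a fibrewise bijective morphism of such bundles over $\id_X$, hence an isomorphism; alternatively, openness of $\Pi$ (it is a covering map) together with openness of the orbit map makes $\bar\Pi$ open. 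The statement about $\blmap$ is then immediate from $\Pi(t,x) = t\blmap(x)$. For (2): a path in $\sol{X}$ beginning on a leaf lifts through $\Pi$ (generalized covers of a locally path-connected space have unique path lifting), its $\profin{\pi_1(X,\ast)}$-coordinate is a path in a totally disconnected space and hence constant, so the path remains in that leaf; conversely $K = \blmap(\ucov{X})$ is path-connected because $\ucov{X}$ is, so the leaves are exactly the sets $\gamma K$. For (3): $\blmap(x) = \blmap(y)$ precisely when $x$ and $y$ have equal images in every $X_H$, that is, $y = gx$ for the unique deck element $g$ lying in every finite-index subgroup---equivalently every finite-index normal subgroup---so $g \in N$; thus $\blmap$ descends to an injection $N\backslash\ucov{X} \to \sol{X}$ onto the baseleaf.

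The one genuinely delicate step is the homeomorphism in (1): since $\ucov{X}$ is typically non-compact, the cheap ``continuous bijection from a compact space to a Hausdorff space'' argument is unavailable, and one must instead lean either on the theory of generalized covering spaces---which is exactly why local path-connectedness of the universal cover, i.e.\ unloopability, is the correct hypothesis---or on the principal-bundle comparison above. Everything else is McCord's, and the passage from sequential to directed index sets is routine: the only inputs are unique path lifting and the local product structure of finite covers, neither of which is sensitive to the cardinality or order type of the indexing set.
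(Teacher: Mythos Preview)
The paper does not supply a proof of this theorem: it is stated as a summary of McCord's results, with the remark immediately preceding it that ``the results of his original paper apply to inverse limits of general directed systems of regular covers of an unloopable base space with essentially no change to the arguments.'' Your proposal is exactly an unpacking of that remark---you sketch how McCord's arguments for (1)--(4) go through when the sequential index set is replaced by the directed system of finite-index (normal) subgroups---and your sketch is correct and matches the paper's intent.

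One small point worth flagging: your handling of the homeomorphism in (1) is more careful than what the paper says. You correctly note that $\ucov{X}$ need not be compact, so the ``continuous bijection from compact to Hausdorff'' shortcut is unavailable, and you supply two legitimate workarounds (comparison of principal bundles over $\id_X$, or openness of $\Pi$ as a generalized covering map). McCord's original argument also proceeds via the generalized covering-space theory rather than compactness, so this is consistent with the literature; the paper simply does not spell this out.
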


\begin{corollary}
\LabCor{baseleaf-inj}
Suppose $(X,\ast)$ is an unloopable pointed topological space with residually finite fundamental group. The baseleaf map $\blmap \from \ucov{X} \to \sol{X}$ is injective with dense image.
\end{corollary}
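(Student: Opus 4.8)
The plan is to derive both assertions directly from \RefThm{metric-model-new}, with essentially no extra work.

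\emph{Injectivity.} Since $\pi_1(X,\ast)$ is residually finite, the residual finiteness kernel $N = \bigcap_{H \nfidx \pi_1(X,\ast)} H$ is trivial, so $N \backslash \ucov{X} = \ucov{X}$ and the descended map of \RefThm{metric-model-new}(3) is exactly $\blmap$. That statement asserts this map is a bijection onto its image, so $\blmap$ is injective, and there is nothing further to check.

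\emph{Density.} Write $K = \blmap(\ucov{X})$ for the baseleaf. By \RefThm{metric-model-new}(1) I would work inside $\solmod{\pi_1(X,\ast)}{X}$, where $\blmap$ is the composite $\ucov{X} \to \{\id\} \times \ucov{X} \to \sol{X}$ and the map $\Pi(t,x) = t\,\blmap(x)$ is surjective; hence every point of $\sol{X}$ has the form $\gamma\,\blmap(x)$ for some $\gamma \in \profin{\pi_1(X,\ast)}$ and $x \in \ucov{X}$. The defining relation for the quotient, $[\gamma, x] = [\gamma\,\pfi(g)^{-1}, gx]$ for $g \in \pi_1(X,\ast)$, unwinds to the equivariance $\pfi(g)\,\blmap(x) = \blmap(gx)$, so $K$ is invariant under the $\pfi(\pi_1(X,\ast))$-action on $\sol{X}$. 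Because $\pi_1(X,\ast)$ is residually finite, $\pfi(\pi_1(X,\ast))$ is dense in $\profin{\pi_1(X,\ast)}$; choosing a net $(g_i)$ in $\pi_1(X,\ast)$ with $\pfi(g_i) \to \gamma$ and invoking continuity of the action $\profin{\pi_1(X,\ast)} \curvearrowright \sol{X}$, I obtain $\blmap(g_i x) = \pfi(g_i)\,\blmap(x) \to \gamma\,\blmap(x)$, so $\gamma\,\blmap(x) \in \overline{K}$. As the point was arbitrary, $\overline{K} = \sol{X}$.

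\emph{Main obstacle.} There is no serious obstacle here; the only points requiring care are (a) reading off the equivariance $\pfi(g)\,\blmap(x) = \blmap(gx)$ cleanly — either by unwinding the description of $\Pi$ and the $\pi_1(X,\ast)$-action, or from the characterization of $\blmap$ as the map induced by the covering maps $\ucov{X} \to X_H$ — and (b) noting that density in the (generally non-metrizable) space $\sol{X}$ must be tested with nets rather than sequences, so no countability hypothesis on $\pi_1(X,\ast)$ is needed.
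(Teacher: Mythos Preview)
Your proof is correct and matches the paper's intent: the corollary is stated without proof immediately after \RefThm{metric-model-new}, and you have supplied exactly the unpacking the paper leaves implicit. One small note: density of $\pfi(\pi_1(X,\ast))$ in $\profin{\pi_1(X,\ast)}$ holds for any group, not just residually finite ones, so the residual finiteness hypothesis is used only in the injectivity half.
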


\section{The shape of solenoids}\LabSec{shape-of-solenoids}

Our first main result, \RefThm{ptd-homotopy-comm}, is proved using the language of shape theory.
The natural approach to shape for this setting is that of inverse systems, as described by Marde\v{s}i\'{c} and Segal~\cite{mardesic-segal}. The inverse system approach to shape associates certain inverse systems, known as expansions, to an object in a category.

\begin{definition}[\cite{mardesic-segal}*{Ch I. \S2.1, p.~19}]
    Given a category $\mathcal{T}$ and an object $X$ in $\mathcal{T}$, a
    $\pro{\mathcal{T}}$ morphism $\mathbf{p}\from X\to \mathbf{X}$ is a \emph{$\mathcal{T}$-expansion}\footnote{Marde\v{s}i\'{c} and Segal also treat the more general notion of a $\mathcal{P}$-expansion for a full subcategory $\mathcal{P}$ of $\mathcal{T}$. We only need to consider $\mathcal{T}$-expansions.}
    of $X$ if $\mathbf{X}$ is an object of $\pro{\mathcal{T}}$ and $\mathbf{p}$ satisfies a universal property: for any pro-$\mathcal{T}$ morphism $\mathbf{h}: X\to\mathbf{Y}$ with $\mathbf{Y}$ in pro-$\mathcal{T}$ there is a unique pro-$\mathcal{T}$ morphism $\mathbf{f}$ making the following diagram commute:
    \begin{center}
    \begin{tikzcd}
    X \ar{r}{\mathbf{h}} \ar[swap]{d}{\mathbf{p}}& \mathbf{Y} \\
    \mathbf{X} \ar[dashed,swap]{ur}{\mathbf{f}} &
    \end{tikzcd}
    \end{center}
\end{definition}

As we are primarily interested in morphisms, the following representation property of expansions is central.

\begin{fact}[\cite{mardesic-segal}*{Ch I. \S2.3, p.~25}]
\label{fact:propexp}
    Given $\mathcal{T}$-expansions $\mathbf{p}\from X\to \mathbf{X}$ and $\mathbf{q}\from Y\to \mathbf{Y}$
    and a $\mathcal{T}$ morphism $f\from X\to Y$, there exists a unique pro-$\mathcal{T}$ morphism $\mathbf{f}$
    such that the following square commutes in $\pro{\mathcal{T}}$
    \begin{center}
    \begin{tikzcd}
    X\ar{r}{f}\ar[swap]{d}{\mathbf{p}} & Y \ar{d}{\mathbf{q}} \\
    \mathbf{X}\ar{r}{\mathbf{f}} & \mathbf{Y}
    \end{tikzcd}
    \end{center}
\end{fact}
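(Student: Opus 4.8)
The plan is to obtain both existence and uniqueness directly from the universal property that makes $\mathbf{p}$ a $\mathcal{T}$-expansion; the hypothesis that $\mathbf{q}$ is itself an expansion will not actually be needed, only that $\mathbf{Y}$ is an object of $\pro{\mathcal{T}}$.

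First I would recall that $\mathcal{T}$ sits as a full subcategory of $\pro{\mathcal{T}}$ via rudimentary systems, so the $\mathcal{T}$-morphism $f\from X\to Y$ is simultaneously a $\pro{\mathcal{T}}$-morphism, and hence the composite $\mathbf{h} := \mathbf{q}\circ f\from X\to \mathbf{Y}$ is a well-defined $\pro{\mathcal{T}}$-morphism from $X$ to an object of $\pro{\mathcal{T}}$. Next I would feed $\mathbf{h}$ into the universal property of the expansion $\mathbf{p}\from X\to\mathbf{X}$: this produces a unique $\pro{\mathcal{T}}$-morphism $\mathbf{f}\from\mathbf{X}\to\mathbf{Y}$ with $\mathbf{f}\circ\mathbf{p} = \mathbf{h} = \mathbf{q}\circ f$, which is exactly the commuting square asserted in the statement. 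The uniqueness clause transfers verbatim, since the defining universal property already quantifies over \emph{all} $\pro{\mathcal{T}}$-morphisms $\mathbf{X}\to\mathbf{Y}$ completing the triangle, hence in particular over all those completing the square.

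The only point requiring care — the closest thing to an obstacle — is the formal bookkeeping inside $\pro{\mathcal{T}}$: one must know that composition of $\pro{\mathcal{T}}$-morphisms is well-defined on equivalence classes of system maps and associative, and that the rudimentary-system embedding $\mathcal{T}\hookrightarrow\pro{\mathcal{T}}$ is full and faithful, so that writing $\mathbf{q}\circ f$ and running the diagram chase are legitimate. Granting those foundational facts from \cite{mardesic-segal}, no information about inverse limits, cofinality, or the internal structure of the systems $\mathbf{X}$ and $\mathbf{Y}$ enters the argument; the Fact is essentially a repackaging of the defining universal property of an expansion.
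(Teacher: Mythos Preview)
Your argument is correct. The paper does not supply its own proof of this statement; it is recorded as a Fact with a citation to \cite{mardesic-segal}*{Ch I. \S2.3, p.~25}, and the argument you give---compose $f$ with $\mathbf{q}$ to obtain a $\pro{\mathcal{T}}$-morphism $X\to\mathbf{Y}$, then invoke the defining universal property of the expansion $\mathbf{p}$---is the standard one and is essentially what appears in that reference. Your observation that $\mathbf{q}$ need not itself be an expansion is also correct and worth noting.
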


It follows from uniqueness that the  function
\[ \mathcal{T}(X, Y) \to \pro{\mathcal{T}}(\mathbf{X},\mathbf{Y}) \]
is natural in the sense that it respects composition; if $f\circ g = h$ in $\mathcal{T}$, then $\mathbf{f} \circ \mathbf{g} = \mathbf{h}$ in $\pro{\mathcal{T}}$.
In general, though, the assignment may be neither injective nor surjective~\cite{mardesic-segal}*{Ch I. \S2.3, Remark 9}.

Specializing to full solenoids over unloopable pointed compact Hausdorff spaces, we are able to apply these facts 
to study self-homotopy equivalences in light of the following theorem.

\begin{theorem}[\cite{mardesic-segal}*{Ch I. \S5.4, Theorem 13}]\LabThm{cpt-expansion}
    Suppose $(\mathbf{X},\ast)$ is an inverse system of pointed compact Hausdorff spaces. If $(X_\infty, \ast)$ is the inverse limit of $(\mathbf{X},\ast)$ in $\Cpt_\ast$, then $\mathbf{p} \from (X_\infty,\ast) \to (\mathbf{X},\ast)$ is an $\HTop_\ast$-expansion
    of $(X_\infty, \ast)$.
\end{theorem}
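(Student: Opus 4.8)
The plan is to reduce the statement to the two standard conditions that characterize expansions and then to verify those; everything else is formal, and the only genuinely topological input is a compactness argument. First I would record the characterization \cite{mardesic-segal}*{Ch I, \S2.2}: a $\pro{\HTop_\ast}$-morphism $\mathbf{p}=(p_\lambda)\from(X_\infty,\ast)\to(\mathbf{X},\ast)$ is an $\HTop_\ast$-expansion exactly when it satisfies (E1): every pointed map $h\from(X_\infty,\ast)\to(Y,\ast)$ is pointed-homotopic to $f\circ p_\lambda$ for some index $\lambda$ and map $f\from(X_\lambda,\ast)\to(Y,\ast)$; and (E2): whenever $f\circ p_\lambda\simeq g\circ p_\lambda$ for $f,g\from(X_\lambda,\ast)\to(Y,\ast)$, already $f\circ p_{\lambda\lambda'}\simeq g\circ p_{\lambda\lambda'}$ for some $\lambda'\geq\lambda$. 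Since $\mathbf{p}$ is manifestly a $\pro{\HTop_\ast}$-morphism into an object of $\pro{\HTop_\ast}$, it remains to establish (E1) and (E2), and throughout I would use the model of $X_\infty$ as a closed subspace of the compact Hausdorff product $\prod_\lambda X_\lambda$ with each $p_\lambda$ a restricted coordinate projection.

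Condition (E1) is the substantive one. The fact to extract from compactness is: \emph{every finite open cover of $X_\infty$ is refined by $p_\lambda^{-1}(\mathcal{V})$ for a single index $\lambda$ and a finite open cover $\mathcal{V}$ of $X_\lambda$} --- a Wallace-type argument, since every open subset of $X_\infty$ contains some basic neighbourhood $p_\mu^{-1}(U)\cap X_\infty$, finitely many of these cover $X_\infty$ by compactness, and one takes $\lambda$ above the finitely many indices $\mu$ that occur. Given $h\from X_\infty\to Y$ I would pull back along $h$ the open star cover of a triangulation of a compact piece of $Y$, refine the resulting finite cover of $X_\infty$ through a stage $X_\lambda$ as above, and then --- using that the compact Hausdorff space $X_\lambda$ is normal, hence carries partitions of unity subordinate to finite open covers --- build from $\mathcal{V}$, via its nerve and a simplicial map to $Y$ induced by the refinement, a map $f\from X_\lambda\to Y$ for which $f\circ p_\lambda$ and $h$ are carried by a common cover of $X_\infty$; the standard homotopy between maps carried by a common cover then gives $f\circ p_\lambda\simeq h$. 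Basepoints are accommodated by taking the triangulation of $Y$ and the cover $\mathcal{V}$ to contain the basepoint as a distinguished vertex, respectively member.

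For (E2) I would use that $X_\infty\times I\cong\varprojlim(X_\lambda\times I)$ is again the inverse limit of an inverse system of compact Hausdorff spaces, so a homotopy realizing $f\circ p_\lambda\simeq g\circ p_\lambda$ is a map out of such a limit; feeding it into the (E1) construction pushes it down to $X_{\lambda'}\times I$ for some $\lambda'\geq\lambda$, and comparing the two ends (enlarging $\lambda'$ once more if needed) gives $f\circ p_{\lambda\lambda'}\simeq g\circ p_{\lambda\lambda'}$. In practice (E1) and (E2) are cleanest to prove by one simultaneous induction over covers. With (E1) and (E2) in hand, $\mathbf{p}$ is an $\HTop_\ast$-expansion by the characterization above.

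I expect the main obstacle to be the factorization inside (E1): moving a map off the inverse limit onto a single finite stage up to homotopy. This is precisely the step compactness makes possible --- it confines the governing open cover to one stage and supplies the partitions of unity needed to realize a map there --- and it is also where the local structure of the target is used, through the nerve and simplicial-approximation machinery; (E2) and the global assembly are by comparison formal, resting only on the fact that a finite product with $I$ commutes with inverse limits. One can alternatively organize the whole argument around the \v{C}ech expansion of $X_\infty$, whose universal property belongs to the general theory, but the compactness ingredient is unavoidable in either presentation.
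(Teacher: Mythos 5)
The paper offers no proof of this statement: it is quoted verbatim from Marde\v{s}i\'{c}--Segal, so there is no in-paper argument to compare yours against. Your overall strategy --- reducing to the conditions (E1) and (E2) that characterize expansions, using compactness to confine a finite open cover of $X_\infty$ to a single stage $X_\lambda$, and factoring $h$ through the nerve of that cover via partitions of unity --- is the standard route, and the Wallace-type refinement lemma is correctly identified as the essential topological input.

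The gap is in the scope of (E1). With the definition of $\mathcal{T}$-expansion used in this paper (\RefDef{procategory} and the universal property quantified over \emph{all} of $\pro{\HTop_\ast}$), condition (E1) must hold for every pointed topological target $Y$: taking $\mathbf{Y}$ rudimentary, every pointed map $h\from X_\infty\to Y$ must factor up to pointed homotopy through some $X_\lambda$. Your verification triangulates $Y$ and invokes simplicial approximation and the ``maps carried by a common cover are homotopic'' principle; all of this requires $Y$ to be a polyhedron or ANR, an unstated hypothesis that cannot be removed. For general $Y$ the factorization is false: let $\mathbf{X}$ be the dyadic system $S^1\xleftarrow{z^2}S^1\xleftarrow{z^2}\cdots$ with limit the solenoid $\Sigma$ and take $h=\id_\Sigma$. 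If $\id_\Sigma\simeq f\circ p_n$ for some $f\from S^1\to\Sigma$, then applying \v{C}ech cohomology (a homotopy functor, continuous on compact Hausdorff limits) gives $\id=p_n^\ast\circ f^\ast$ on $\check{H}^1(\Sigma)\cong\mathbb{Z}[1/2]$, forcing $\mathbb{Z}[1/2]$ to embed in $\check{H}^1(S^1)\cong\mathbb{Z}$, which is impossible. So what your argument actually establishes (modulo care with non-surjective projections, and with (E2), whose ``compare the two ends'' step needs the ANR extension/nearby-maps property rather than a second appeal to (E1), on pain of circularity) is that $\mathbf{p}$ satisfies (E1) and (E2) with respect to polyhedral or ANR targets --- the expansion property relative to $\HPol_\ast$. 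That is the theorem Marde\v{s}i\'{c}--Segal prove and what the paper needs downstream, where the relevant targets are covers of spaces homotopy equivalent to CW complexes, but it is strictly weaker than the statement as transcribed here, and no proof can close that distance.
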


The proof of \RefThm{ptd-homotopy-comm} relies on the following proposition. Compare, for example, to improvability results of Geoghegan and Krasinkiewicz \cite{geoghegan-krasinkiewicz}.

\begin{proposition}
\label{prop:nat-pro-bijection}
Suppose $(X,\ast)$ and $(Y,\star)$ are unloopable pointed compact Hausdorff spaces with full solenoids $(\sol{X}, \ast)$ and $(\sol{Y}, \star)$. Let $(\mathbf{X},\ast)$ and $(\mathbf{Y},\star)$ be the inverse systems of connected, finite-sheeted pointed covers. Then there is a natural bijection
\[ \HCpt_\ast(\sol{X},\sol{Y}) \to \pro{\HCpt}_\ast(\mathbf{X},\mathbf{Y}) \]
\end{proposition}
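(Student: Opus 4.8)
The plan is to reduce the proposition to a concrete statement about lifting maps and homotopies through finite covers, and then to prove that statement by covering-space theory.

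\emph{Reduction.} Every space in sight --- $\sol{X}$, $\sol{Y}$, and the finite covers $X_\mu$, $Y_\mu$ --- is compact Hausdorff, and the inverse limits defining $\sol{X}$ and $\sol{Y}$ agree whether computed in $\Top_\ast$ or in $\Cpt_\ast$. I would first invoke \RefThm{cpt-expansion} to see that the projection morphisms $\mathbf{p}\from(\sol{X},\ast)\to(\mathbf{X},\ast)$ and $\mathbf{q}\from(\sol{Y},\star)\to(\mathbf{Y},\star)$ are $\HTop_\ast$-expansions; since all objects involved lie in $\Cpt_\ast$ and $\pro{\HCpt}_\ast$ is a full subcategory of $\pro{\HTop}_\ast$, they are also $\HCpt_\ast$-expansions, so \cref{fact:propexp} produces the asserted natural map $\Phi\from\HCpt_\ast(\sol{X},\sol{Y})\to\pro{\HCpt}_\ast(\mathbf{X},\mathbf{Y})$ and reduces the proposition to showing $\Phi$ is a bijection. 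Next I would unwind the pro-category: the universal property of the expansion $\mathbf{p}$ gives a bijection $\pro{\HCpt}_\ast(\mathbf{X},\mathbf{Y})\cong\pro{\HCpt}_\ast((\sol{X}),\mathbf{Y})$ by precomposition with $\mathbf{p}$, and because the index set of the rudimentary system $(\sol{X})$ is a point, a morphism $(\sol{X})\to\mathbf{Y}$ is exactly a family $([f_\mu])_\mu$ of pointed homotopy classes with $[q_{\mu\mu'}\circ f_{\mu'}]=[f_\mu]$, so $\pro{\HCpt}_\ast((\sol{X}),\mathbf{Y})=\varprojlim_\mu\HCpt_\ast(\sol{X},Y_\mu)$. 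Tracking the definitions, $\Phi$ becomes under these identifications the natural map $\Psi\from[f]\mapsto([q_\mu\circ f])_\mu$, where $q_\mu\from\sol{Y}\to Y_\mu$ is the projection, so it suffices to prove $\Psi$ is a bijection.

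\emph{The lifting tool.} I would then record two facts. First, $\sol{X}$ is connected, being the inverse limit over a directed set of nonempty connected compact Hausdorff spaces; consequently any two lifts of a map along a covering projection that agree at a single point are equal, since their locus of agreement is clopen. Second, the system $\mathbf{Y}$ has a minimal object $Y$ (the one-sheeted cover $Y\to Y$, corresponding to $H=\pi_1(Y,\star)$), each bonding map $q_{Y\mu}\from Y_\mu\to Y$ is a covering map, hence has the homotopy lifting property with respect to all spaces, and a lift of a pointed map or pointed homotopy along $q_{Y\mu}$ is automatically pointed because the fiber of $q_{Y\mu}$ over the basepoint is discrete.

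\emph{Bijectivity of $\Psi$.} Both directions I would prove by lifting from the bottom of the tower, that is from $Y$. For injectivity: if $f,g\from\sol{X}\to\sol{Y}$ satisfy $q_\mu f\simeq q_\mu g$ for all $\mu$, choose a pointed homotopy $H$ in $Y$ from $q_Y f$ to $q_Y g$ and, for each $\mu$, lift $H$ along $q_{Y\mu}$ starting from $q_\mu f$; uniqueness of lifts forces these lifts to be compatible across $M$ and forces each terminal map to equal $q_\mu g$, so they assemble to a pointed homotopy $\sol{X}\to\sol{Y}$ from $f$ to $g$. For surjectivity: given a compatible family $([f_\mu])_\mu$, the representative $f_Y$ at the minimal index is homotopic to $q_{Y\mu}\circ f_\mu$, which lifts through $q_{Y\mu}$, so by the homotopy lifting property $f_Y$ itself lifts, and there is a unique pointed lift $g_\mu\from\sol{X}\to Y_\mu$ sending basepoint to basepoint. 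Uniqueness makes $(g_\mu)_\mu$ a strictly compatible family, hence a pointed map $f\from\sol{X}\to\sol{Y}$ with $q_\mu f=g_\mu$, and the homotopy used to produce the lift shows $g_\mu\simeq f_\mu$, so $\Psi([f])=([f_\mu])_\mu$.

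\emph{Main obstacle.} The main obstacle is less a theorem than a bookkeeping hazard: getting the reduction exactly right --- in particular verifying that the abstract map $\Phi$ of \cref{fact:propexp} is literally the concrete map $\Psi$ --- and resisting the temptation to build the lifts inductively up the directed set $M$, which need not admit a cofinal sequence. The correct move is to lift everything from the minimal object $Y$ at once, after which compatibility across all of $M$ and the matching of terminal maps are automatic from uniqueness of lifts over the connected space $\sol{X}$; the remaining homotopy theory is routine, the only subtlety being that $\sol{X}$ is connected but neither path-connected nor locally connected, so one must use the abstract clopen-locus uniqueness argument rather than lifting criteria phrased with fundamental groups.
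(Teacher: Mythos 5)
Your proposal is correct, and its skeleton matches the paper's: \RefThm{cpt-expansion} plus \cref{fact:propexp} produce the natural map, and surjectivity is obtained by lifting from the one-sheeted cover $Y$ and exploiting uniqueness of lifts to get a strictly compatible family. The differences are in execution. For surjectivity, you first use the universal property of the expansion to identify $\pro{\HCpt}_\ast(\mathbf{X},\mathbf{Y})$ with $\varprojlim_\mu \HCpt_\ast(\sol{X},Y_\mu)$ and then lift maps defined on $\sol{X}$ itself, which forces you (correctly) to use the homotopy lifting property and the clopen-agreement uniqueness argument, since $\sol{X}$ is connected but not locally path-connected; the paper's \RefLem{covering-homotopy-limit} instead builds the strictly commuting representative at the level of the finite covers $X_{\phi(\mu)}\to Y_\mu$, where ordinary covering-space theory applies, and only then passes to the limit. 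The genuine divergence is injectivity: the paper lifts the bottom-level homotopy through the single map $q\from\sol{Y}\to Y$ at once, invoking McCord's principal-bundle structure (\RefThm{metric-model-new}), the fact that a fiber bundle over a compact Hausdorff base is a fibration, unique path lifting from totally disconnected fibers, and density of the path-connected baseleaf to conclude $F=g$; you instead lift that homotopy through each finite covering $Y_\mu\to Y$ separately and assemble by uniqueness of lifts over the connected space $\sol{X}\times I$. Your route is more elementary and self-contained---it needs no bundle structure, no fibration theory, and no baseleaf density---at the cost of slightly more bookkeeping over the index set; the paper's route reuses structural results it has already established and needs again later. Your closing remarks correctly identify the real hazards (the absence of a cofinal sequence in $M$, and the need to avoid $\pi_1$-based lifting criteria on $\sol{X}$), and I see no gap.
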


\begin{lemma} \LabLem{covering-homotopy-limit}
Suppose $(X,\ast)$ and $(Y,\star)$ are  unloopable pointed compact Hausdorff spaces. Let $(\mathbf{X},\ast)$ and $(\mathbf{Y},\star)$ be the inverse systems of connected, finite-sheeted pointed covers. Then for every
morphism $[\mathbf{f}] \in \pro{\HCpt}_\ast(\mathbf{X},\mathbf{Y})$ there is a representative $\mathbf{g}$ that
has a limit in $\Cpt_\ast$
\end{lemma}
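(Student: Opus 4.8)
The plan is to rigidify the given $\pro{\HCpt}_\ast$ morphism into an honest morphism of inverse systems in $\Cpt_\ast$ by taking \emph{all} of its component maps to be covering-space lifts of a single continuous map into $Y$. The mechanism that makes this work is the uniqueness of lifts through covering projections of connected, locally path-connected spaces, which forces the required diagrams to commute on the nose once the images of basepoints are pinned down.

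\textbf{Reduction to level form.} First I would put the morphism in level form. The index set $M$ of $\mathbf Y$ is the poset of finite-index subgroups of $\pi_1(Y,\star)$ under reverse inclusion, which is cofinite, so \RefLem{lem:cofinite-morphisms} represents $[\mathbf f]$ by a morphism $(f_\mu,\psi)$ with $\psi\from M\to\Lambda$ increasing and, for all $\mu\leq\mu'$ in $M$, a commuting square $q_{\mu\mu'}\circ f_{\mu'} = f_\mu\circ p_{\psi(\mu)\psi(\mu')}$ in $\HCpt_\ast$. The one-sheeted cover $Y$ is the least element of $M$; write $q_{Y\mu}\from Y_\mu\to Y$ for the covering projection. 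Specializing the commuting square to the pair $Y\leq\mu$ gives $q_{Y\mu}\circ f_\mu \simeq_\ast f_Y\circ p_{\psi(Y)\psi(\mu)}$ for every $\mu\in M$.

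\textbf{Construction and verification.} Now fix a continuous pointed representative $d\from X_{\psi(Y)}\to Y$ of the homotopy class $f_Y$ and, for each $\mu\in M$, set $d_\mu = d\circ p_{\psi(Y)\psi(\mu)}\from X_{\psi(\mu)}\to Y$. Since $d_\mu\simeq_\ast q_{Y\mu}\circ f_\mu$, the subgroup $(d_\mu)_\ast\pi_1(X_{\psi(\mu)},\ast)$ equals $(q_{Y\mu})_\ast(f_\mu)_\ast\pi_1(X_{\psi(\mu)},\ast)$ and hence lies in $(q_{Y\mu})_\ast\pi_1(Y_\mu,\star)$; as $X_{\psi(\mu)}$ is a connected cover of the unloopable space $X$, it is path-connected and locally path-connected, so the lifting criterion produces a unique pointed lift $g_\mu\from X_{\psi(\mu)}\to Y_\mu$ with $q_{Y\mu}\circ g_\mu = d_\mu$. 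I would then verify three things. (i) $\mathbf g = (g_\mu,\psi)$ is a morphism of inverse systems in $\Cpt_\ast$: for $\mu\leq\mu'$, both $q_{\mu\mu'}\circ g_{\mu'}$ and $g_\mu\circ p_{\psi(\mu)\psi(\mu')}$ are pointed maps $X_{\psi(\mu')}\to Y_\mu$ that become $d\circ p_{\psi(Y)\psi(\mu')}$ after composing with $q_{Y\mu}$, so by uniqueness of pointed lifts they coincide and the squares commute strictly. (ii) Each $g_\mu$ is pointed-homotopic to $f_\mu$: lift the based homotopy $d_\mu\simeq_\ast q_{Y\mu}\circ f_\mu$ through $q_{Y\mu}$ starting at $g_\mu$; its endpoint is a pointed lift of $q_{Y\mu}\circ f_\mu$, hence equals $f_\mu$ by uniqueness. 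Thus $[\mathbf g] = [(f_\mu,\psi)] = [\mathbf f]$ in $\pro{\HCpt}_\ast$. (iii) Since the $X_\lambda$ and $Y_\mu$ are finite-sheeted covers of compact Hausdorff spaces, hence compact Hausdorff, the inverse limits exist in $\Cpt_\ast$, and the strictly commuting reindexing morphism $\mathbf g$ induces a continuous pointed map $\varprojlim X_\lambda\to\varprojlim Y_\mu$, i.e.\ $\sol{X}\to\sol{Y}$; this is the limit of $\mathbf g$ in $\Cpt_\ast$.

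\textbf{Main obstacle.} The difficulty here is conceptual rather than computational: a naive level-by-level choice of representatives would force the chosen maps to be simultaneously compatible with bonding maps arriving from mutually incomparable finite covers, and such choices cannot be made coherently without inductive bookkeeping over the poset $M$. The device that sidesteps this is to define every $g_\mu$ as the canonical lift of the single fixed map $d$ pulled back from the bottom of the target system; uniqueness of pointed lifts through coverings then makes all the relevant diagrams commute automatically, so no induction over $M$ is needed.
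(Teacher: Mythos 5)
Your proposal is correct and follows essentially the same route as the paper: reduce to level form via \RefLem{lem:cofinite-morphisms}, define each $g_\mu$ as the unique pointed lift of the bottom-level map composed with the bonding maps, use uniqueness of lifts to get strict commutativity, and use pointed homotopy lifting to check $g_\mu \simeq_\ast f_\mu$. The only cosmetic difference is that you invoke the lifting criterion via the induced $\pi_1$-subgroup containment, while the paper phrases the existence of the lift directly in terms of the given pointed homotopy; these are the same argument.
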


\begin{proof}
Let the inverse system $\mathbf{X}$ be indexed by $\Lambda$ with bonding morphisms $p_{\lambda_1\lambda_2} \from X_{\lambda_1} \to X_{\lambda_2}$, and let $\mathbf{Y}$ be indexed by $M$ with bonding morphisms $q_{\mu_1\mu_2} \from Y_{\mu_1} \to Y_{\mu_2}$. Both $\Lambda$ and $M$ are cofinite directed sets with unique minimum elements. Let $\minelt\in M$ denote the minimum index, so that $Y_\minelt = Y$.

By \RefLem{lem:cofinite-morphisms}, any given morphism $[\mathbf{f}] \in \pro{\HCpt}_\ast(\mathbf{X},\mathbf{Y})$ is represented by an order-preserving function $\phi \from M \to \Lambda$ and a collection of continuous maps $f_\mu \from (X_{\phi(\mu)},\ast) \to (Y_\mu,\star)$ such that if $\mu \leq \mu'$ then there is a pointed homotopy $f_\mu \circ p_{\phi(\mu)\phi(\mu')} \sim q_{\mu\mu'}\circ f_{\mu'}$. To construct the desired morphism of inverse systems, let $g_\minelt = f_\minelt \from X_{\phi(\minelt)} \to Y_\minelt$. Given any $\mu \geq \minelt$, the existence of the pointed homotopy $f_\minelt \circ p_{\phi(\minelt)\phi(\mu)} \sim q_{\minelt\mu}\circ f_{\mu}$ guarantees that $f_\minelt \circ p_{\phi(\minelt)\phi(\mu)}$ uniquely lifts to a map $g_{\mu} \from X_{\phi(\mu)} \to Y_\mu$ satisfying the equality of continuous maps $f_\minelt \circ p_{\phi(\minelt)\phi(\mu)} = q_{\minelt\mu} \circ g_\mu$.

By construction, for any $\mu\in M$ we we have pointed homotopies
\[
 q_{\minelt\mu}\circ f_{\mu} \sim f_\minelt \circ p_{\phi(\minelt)\phi(\mu)} \sim q_{\minelt\mu} \circ g_\mu.
\]
The homotopy lifting property provides a pointed homotopy $f_\mu \sim g_\mu$. It follows that the collection of maps $(g_\mu)$ defines a map $\mathbf{g}$ of inverse systems in $\HCpt_*$ and $[\mathbf{g}] = [\mathbf{f}] \in \pro{\HCpt}_\ast(\mathbf{X},\mathbf{Y})$.

In fact, it follows from uniqueness of liftings that we may consider $\mathbf{g}$ as a morphism of systems in $\Cpt_*$. To see this, consider indices $\mu\leq \mu' \in M$. Then 
\[
    q_{\minelt \mu'} \circ g_{\mu'} = f_\minelt \circ p_{\phi(\minelt)\phi(\mu')} = (f_\minelt \circ p_{\phi(\minelt)\phi(\mu)}) \circ p_{\phi(\mu)\phi(\mu')} = q_{\minelt \mu} \circ (g_\mu \circ p_{\phi(\mu)\phi(\mu')})
\]
while on the other hand
\[
    q_{\minelt \mu'} \circ g_{\mu'} = q_{\minelt \mu} \circ (q_{\mu \mu'} \circ g_{\mu'}).
\]
Then uniqueness of lifts implies $g_\mu \circ p_{\phi(\mu)\phi(\mu')} = q_{\mu \mu'} \circ g_{\mu'}$. 

For each $\lambda\in \Lambda$, let $p_\lambda \from \sol{X} \to X_\lambda$ be the the system of projections. For each $\mu$, define $\sol{g_\mu} \from \sol{X} \to Y_\mu$ by $\sol{g_\mu} = g_{\phi(\mu)} \circ p_{\phi(\mu)}$. These determine a continuous function $\sol{g} : \sol{X} \to \sol{Y}$, the limit of $\mathbf{g}$.
\end{proof}

\begin{proof}[Proof of \cref{prop:nat-pro-bijection}]
By \RefThm{cpt-expansion}, both $(\sol X,\ast) \to (\mathbf{X},\ast)$ and $(\sol Y, \star) \to (\mathbf{Y},\star)$ are  $\HTop_\ast$-expansions. It follows from \cref{fact:propexp} that there is a natural function
\[ \HCpt_\ast(\sol{X},\sol{Y}) \to \pro{\HTop}_\ast(\mathbf{X},\mathbf{Y}) \]
Because $(\mathbf{X}, \ast)$ and $(\mathbf{Y},\star)$ are systems of compact Hausdorff spaces and $\HCpt_\ast$ is a full subcategory of $\HTop_\ast$, this determines a natural function
\[ \HCpt_\ast(\sol{X},\sol{Y}) \to \pro{\HCpt}_\ast(\mathbf{X},\mathbf{Y}) \]
By \RefLem{covering-homotopy-limit}, this map is surjective. 

Now suppose $f,g \from (\sol{X},\ast) \to (\sol{Y},\star)$ are pointed continuous maps whose pointed homotopy classes map to the same element $[\mathbf{h}]$ of $\pro{\HCpt}_\ast(\mathbf{X},\mathbf{Y})$. Let $[\mathbf{q}] \from \sol{Y} \to \mathbf{Y}$ be the expansion morphism. Since $f$ and $g$ both map to $\mathbf{h}$, by Fact~\ref{fact:propexp},
the morphisms $\mathbf{q}\circ f$ and $\mathbf{q}\circ g$ are representatives of the same element of $\pro{\HCpt}_\ast(\sol{X},\mathbf{Y})$. Consequently the maps $q\circ f$ and $q\circ g$ are pointed homotopic as maps $(\sol{X}, \ast) \to (Y,\star)$, where $q\from \sol{Y}\to Y$ is the projection.

By \RefThm{metric-model-new}, the map $q$ is a fiber bundle over a compact Hausdorff space and therefore a fibration \cite{spanier}*{\S2.7, Cor 14}. Therefore the pointed homotopy $q\circ f \sim q\circ g$ lifts to a homotopy $f\sim F$ for some $F \from \sol{X} \to \sol{Y}$. Because fibers of $q$ are totally disconnected, $q$ has the unique path lifting property \cite{spanier}*{\S2.2, Thm 5}. This implies that both $F$ and the homotopy $f\sim F$ are pointed. Moreover, because the baseleaf of $\sol{X}$ is path-connected, $F$ and $g$ agree on the baseleaf of $\sol{X}$. Indeed, if $\gamma$ is a based path of $\ucov{X}$ starting at the basepoint, then $q\circ F(\gamma) = q\circ g(\gamma)$, so that $F(\gamma)$ is the unique lift
of $q\circ g(\gamma)$, and therefore equal to $g(\gamma)$. But the baseleaf is dense in $\sol{X}$, so we conclude that $F = g$ as pointed continuous maps.
\end{proof}

Combining these facts we arrive at a topological description of the abstract commensurator of $\pi_1(X,\ast)$. For any pointed topological space $(Y,\star)$, let $\HMod(Y,\star)$ be the automorphism group of $(Y,\star)$ in $\Top_\ast$. Elements of $\HMod(Y,\star)$ are equivalence classes of pointed homotopy equivalences, modulo pointed homotopy.

\begin{theorem}
\LabThm{ptd-homotopy-comm}
    Suppose $(X,\ast)$ is an unloopable pointed compact Hausdorff space. Then there is a  homomorphism
    \[ C \from \HMod(\sol{X},\ast) \to \Comm(\pi_1(X,\ast)) \]
    If $X$ is homotopy equivalent to an aspherical CW complex, this map is an isomorphism.
\end{theorem}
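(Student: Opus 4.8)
Write $G = \pi_1(X,\ast)$ and let $(\mathbf{X},\ast)$ denote the inverse system of connected finite-sheeted pointed covers of $(X,\ast)$, indexed by the poset of finite-index subgroups $H \fidx G$ ordered by reverse inclusion, with $X_H = H\backslash\ucov{X}$ and bonding maps the covering projections $X_{H'}\to X_H$ for $H' \le H$; this poset is cofinite and directed. The plan is to build $C$ as a composite of three maps, only the middle of which is sensitive to asphericity. Applying the fundamental group functor $\pi_1\colon \HCpt_\ast \to \Grp$ to $(\mathbf{X},\ast)$ recovers the inverse system $\bfG$ of \RefProp{comm-is-pro-aut}: one has $\pi_1(X_H,\ast) = H$ canonically for the fixed universal cover, and the bonding map $X_{H'}\to X_H$ induces the inclusion $H'\hookrightarrow H$. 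Hence $\pi_1$ extends to a functor $\pro{\HCpt}_\ast \to \pro{\Grp}$ carrying $\mathbf{X}$ to $\bfG$, and in particular to a group homomorphism $\pi_1\colon \Aut_{\pro{\HCpt}_\ast}(\mathbf{X}) \to \Aut_{\pro{\Grp}}(\bfG)$. Combining this with the isomorphism $\HMod(\sol{X},\ast)\cong \Aut_{\pro{\HCpt}_\ast}(\mathbf{X})$ obtained by restricting the composition- and identity-preserving bijection of \cref{prop:nat-pro-bijection} to units (note $\sol{X}$ is compact Hausdorff, so $\HMod(\sol{X},\ast)$ is its automorphism group in $\HCpt_\ast$), and with $\zeta^{-1}\colon \Aut_{\pro{\Grp}}(\bfG)\xrightarrow{\cong}\Comm(G)$ from \RefProp{comm-is-pro-aut}, I define $C = \zeta^{-1}\circ\pi_1\circ(\text{restriction of }\cref{prop:nat-pro-bijection})$. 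This is a homomorphism assuming only unloopability and compactness.

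Since the outer two maps in this composite are isomorphisms, $C$ is an isomorphism exactly when $\pi_1\colon \Aut_{\pro{\HCpt}_\ast}(\mathbf{X})\to\Aut_{\pro{\Grp}}(\bfG)$ is, and since $\pi_1\colon \pro{\HCpt}_\ast(\mathbf{X},\mathbf{X})\to\pro{\Grp}(\bfG,\bfG)$ preserves composition, it is enough to show this map of monoids is a bijection, as it is then a monoid isomorphism carrying units bijectively to units. I would deduce this from the claim that, when $X$ is homotopy equivalent to an aspherical CW complex, $\pi_1$ is full and faithful on the full subcategory of $\HCpt_\ast$ spanned by $\{(X_H,\ast) : H\fidx G\}$, via routine bookkeeping: given a $\pro{\Grp}$ morphism $\bfG\to\bfG$, use \RefLem{lem:cofinite-morphisms} (applicable since the index poset is cofinite) to represent it by an increasing map $\psi$ of index sets and homomorphisms $\psi_H\colon G_{\psi(H)}\to G_H$ with strictly commuting squares; realize each $\psi_H$ by a map $X_{\psi(H)}\to X_H$ using fullness; observe each resulting square commutes up to pointed homotopy by faithfulness, since its two composites induce the same homomorphism on $\pi_1$; this produces a morphism of inverse systems in $\HCpt_\ast$ lifting the given one, giving surjectivity. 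Injectivity is read off the same way: two system morphisms inducing equivalent $\pro{\Grp}$ morphisms have, after passing to a common finer index in each spot, corresponding components with equal $\pi_1$, hence pointed homotopic by faithfulness.

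The heart of the matter, and the step I expect to be the main obstacle, is that $\pi_1\colon \HCpt_\ast(X_{H'},X_H)\to \Hom(H',H)$ is a bijection for all $H',H\fidx G$; the difficulty is that the $X_H$ are compact Hausdorff spaces, not a priori CW complexes, so the classical identification $[A,K(\pi,1)]_\ast\cong\Hom(\pi_1 A,\pi)$ does not apply directly. I would first show each $(X_H,\ast)$ is pointedly homotopy equivalent to an aspherical CW complex: fix a pointed homotopy equivalence $f\colon X\to Z$ onto an aspherical CW complex with pointed homotopy inverse $\bar f$, and let $Z_{H}$ be the cover of $Z$ corresponding to $f_\ast(H)$, which is again an aspherical CW complex; lift $f,\bar f$ to pointed maps $\tilde f_H\colon X_H\to Z_{H}$ and $\tilde{\bar f}_H\colon Z_{H}\to X_H$; lifting a pointed homotopy $\bar f f\simeq_\ast \id_X$ through the covering $X_H\to X$ — whose fiber is discrete, so that the lifted homotopy is automatically pointed — exhibits $\tilde{\bar f}_H\circ \tilde f_H$ as pointed homotopic to a self-lift of $\id_X$ over $X_H\to X$, which is $\id_{X_H}$ by uniqueness of pointed lifts, and symmetrically for the other composite. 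With $X_H\simeq_\ast Z_{H}$ and $X_{H'}\simeq_\ast Z_{H'}$, and $Z_{H'}$ a CW complex while $Z_{H}$ is aspherical, one obtains $\HCpt_\ast(X_{H'},X_H) \cong [Z_{H'},Z_{H}]_\ast \cong \Hom(\pi_1 Z_{H'},\pi_1 Z_{H}) \cong \Hom(H',H)$, naturally in both variables, which is exactly full-and-faithfulness of $\pi_1$ on the $X_H$ and completes the proof. The basepoint arguments in this last paragraph, together with the point that the homotopy equivalence $X\simeq Z$ may be taken pointed, are where the genuine care is required.
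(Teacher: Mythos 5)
Your proposal is correct and follows essentially the same route as the paper: $C$ is the composite of the bijection of \cref{prop:nat-pro-bijection} restricted to automorphisms, the map obtained by applying $\pi_1$ levelwise to the system of finite covers, and the inverse of the isomorphism $\zeta$ of \RefProp{comm-is-pro-aut}, with asphericity entering only to make the middle map bijective. The differences are presentational: the paper converts the term-by-term Whitehead bijection $\HCpt_\ast(X_{H'},X_H)\cong\Hom(H',H)$ into a bijection of pro-hom-sets via the $\varprojlim\varinjlim$ description of pro-morphisms, where you do the equivalent bookkeeping by hand using \RefLem{lem:cofinite-morphisms}, and you make explicit (correctly, via lifting the homotopy equivalence and its homotopies through the finite covers) the step the paper leaves implicit, namely that each $X_H$ is pointedly homotopy equivalent to an aspherical CW complex.
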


\begin{proof}
Let $(\mathbf{X}, \ast)$ be the full system of finite-sheeted pointed covers of $(X,\ast)$. By \cref{prop:nat-pro-bijection}, there is a natural bijection
\[ \HCpt_\ast(\sol{X},\sol{X}) \to
\pro{\HCpt}_\ast(\mathbf{X},\mathbf{X}) \]

A standard characterization of pro-morphisms is the natural bijection~\cite{mardesic-segal}*{Ch I. \S1.1, Remark 4, p.~8},
    \[ \pro{\HCpt}_\ast((\mathbf{X},\ast),(\mathbf{X},\ast)) \simeq \varprojlim_\alpha \varinjlim_\beta \HCpt_\ast(X_\alpha, X_\beta). \]
    Since $\pi_1$ is functorial, there is a natural function
    \[ \varprojlim_\alpha\varinjlim_\beta \HCpt_\ast(X_\alpha, X_\beta) \to \varprojlim_\alpha\varinjlim_\beta \Grp (\pi_1(X_\alpha,\ast),\pi_1(X_\beta,\ast)). \]
    When $X$ is homotopy equivalent to an aspherical CW complex, each $X_\alpha$ is also homotopy equivalent to an aspherical CW complex, and so this function is
    a term-by-term bijection by the Whitehead theorem.

    Let $\pi_1(\mathbf{X}, \ast)$ be the pro-group obtained by applying the $\pi_1$ functor to the inverse system $(\mathbf{X}, \ast)$. As noted above, there is a natural bijection
    \[  \varprojlim_\alpha\varinjlim_\beta\Grp (\pi_1(X_\alpha,\ast),\pi_1(X_\beta,\ast)) \simeq \pro{\Grp}((\pi_1(\mathbf{X},\ast)),(\pi_1(\mathbf{X},\ast)) ) \]
    Combining these natural morphisms, restricting to the automorphism groups in each category, and applying \RefProp{comm-is-pro-aut}, we arrive at
    a homomorphism of groups
    \[ \HMod(\sol{X},\ast) \cong \Aut_{\pro{\HCpt}_\ast}(\mathbf{X},\ast) \to \Aut_{\pro{\Grp}}(\pi_1( \mathbf{X},\ast)) \cong \Comm(\pi_1(X,\ast)) \]
    that is an isomorphism when $X$ is homotopy equivalent to an aspherical CW complex.
\end{proof}

\begin{remark}
Suppose $(X,\ast)$ is an unloopable pointed compact Hausdorff space homotopy equivalent to an aspherical CW complex. Let $(\mathbf{X}, \ast)$ be the system of finite covers and $(\sol{X},\ast)$ be the full solenoid over $(X,\ast)$. Then $\pi_1(\mathbf{X},\ast)$ is equal to the pro-fundamental group $\operatorname{pro-\pi_1}(\sol{X},\ast)$ \cite{mardesic-segal}*{Ch II, \S3.3}. Therefore \RefThm{ptd-homotopy-comm} provides an isomorphism
\[
\HMod(\sol{X},\ast) \cong \Aut(\operatorname{pro-\pi_1}(\sol{X},\ast)).
\]
\end{remark}

\section{Metric notions} \LabSec{metric-notions}

We now turn to the task of relating the topology of the full solenoid $(\sol{X},\ast)$ to the geometry of the group $\pi_1(X,\ast)$ in the case that $X$ is an unloopable compact geodesic metric space and $\pi_1(X,\ast)$ is residually finite. In this section we summarize the basic results used to metrize $\sol{X}$ in this case.

\subsection{Metric spaces}

When $X$ is an unloopable length metric space with metric $d_X$, $X$
determines a metric on $\ucov{X}$, which will also be denoted $d_X$.
The metric on $\ucov{X}$ is also a length metric~\cite{bridson-haefliger}*{Proposition I.3.25}, and
the covering projection is a local isometry with respect to this metric. If $X$ is additionally compact,
then it follows from the metric Hopf-Rinow theorem~\cite{bridson-haefliger}*{Proposition I.3.7}
that both $X$ and $\ucov{X}$ are proper geodesic metric spaces. Moreover $\pi_1(X)$ is finitely 
presented and every finitely presented group arises this way~\cite{bridson-haefliger}*{Corollary I.8.11}. 
	
\subsection{Quasi-isometries}
For metric spaces $(X,d_X)$ and $(Y,d_Y)$, a map $f\from X\to Y$ is a {\em quasi-isometry} if there are constants $K\geq 1$ and $C > 0$ such that 
\[
 (1/K) d_X(x_1, x_2) - C \leq d_Y( f(x_1), f(x_2)) \leq K d_X(x_1,x_2) + C
\]
for any $x_1,x_2\in X$, and for any $y\in Y$ there is some $x\in X$ such that $d_Y( f(x), y) \leq C$. The {\em quasi-isometry} group of $X$ is the group $\QI(X)$ of equivalence classes of
quasi-isometries $f\from X\to X$, where $f_1$
and $f_2$ are equivalent if there is some $D\geq 0$ so that $d(f_1(x), f_2(x)) \leq D$ for all $x\in X$.
	
If $G$ is a finitely generated group, let $\QI(G)$ be the quasi-isometry group of $G$ with respect to the word metric of some finite generating set. The quasi-isometry group is independent of chosen finite generating set.
There is a natural map $\Comm(G) \to \QI(G)$,
where $[\phi] \in \Comm(G)$ determines a quasi-isometry of $G$ by precomposing
with any closest-point projection from $G$ to the domain of $\phi$. Whyte proved
that this is injective, as recorded by Farb and Mosher~\cite{farbmoshersbf}*{Proposition 7.5}.

\begin{lemma}
\label{lem:baseleaf-QI-group-id}
For any unloopable compact geodesic metric space $(X,\ast)$ with fundamental group $G = \pi_1(X,\ast)$, the orbit map $G \to (\ucov{X}, \ast)$ defined by $g\mapsto g\ast$ induces an isomorphism $\QI(G) \cong \QI(\ucov{X})$.
\end{lemma}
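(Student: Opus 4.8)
The plan is to deduce the statement from the Milnor-Schwarz lemma together with the standard fact that a quasi-isometry induces an isomorphism of quasi-isometry groups by conjugation. First I would verify that $G = \pi_1(X,\ast)$ acts on $(\ucov{X}, d_X)$ by isometries, properly, and cocompactly. Each deck transformation is an isometry of the length metric $d_X$ on $\ucov{X}$: it is a lift of $\id_X$, and the covering projection is a local isometry, so deck transformations preserve path lengths and hence $d_X$. The action is proper because it is the deck transformation action of a covering space with Hausdorff quotient and $\ucov{X}$ is locally compact (being a proper metric space), and it is cocompact because $G\backslash\ucov{X} = X$ is compact by hypothesis. Since $\ucov{X}$ is a geodesic, hence length, metric space, the Milnor-Schwarz lemma~\cite{bridson-haefliger}*{Proposition I.8.19} applies: the orbit map $\phi\from G \to \ucov{X}$, $g\mapsto g\ast$, is a quasi-isometry. (This also re-confirms that $G$ is finitely generated, so that $\QI(G)$ is defined; cf.\ the remark above that $\pi_1(X,\ast)$ is finitely presented.)

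Next I would carry out the conjugation construction. Fix a quasi-inverse $\bar\phi\from\ucov{X}\to G$ of $\phi$, so that $\bar\phi\circ\phi$ and $\phi\circ\bar\phi$ lie at bounded distance from $\id_G$ and $\id_{\ucov{X}}$ respectively, and define
\[
\Phi_\ast\from \QI(G)\to\QI(\ucov{X}),\qquad \Phi_\ast[q] = [\phi\circ q\circ\bar\phi],
\]
together with $\Psi_\ast\from\QI(\ucov{X})\to\QI(G)$, $\Psi_\ast[r] = [\bar\phi\circ r\circ\phi]$. The remaining verifications are routine bookkeeping of quasi-isometry constants, which I would carry out in this order: (i) a composition of quasi-isometries is a quasi-isometry, so $\Phi_\ast$ and $\Psi_\ast$ take values in the stated groups; (ii) $\Phi_\ast$ is independent both of the representative $q$ and of the chosen quasi-inverse $\bar\phi$, since quasi-isometries are coarsely Lipschitz and any two quasi-inverses of $\phi$ are at bounded distance; (iii) $\Phi_\ast$ is a homomorphism, by inserting the equivalence $\bar\phi\circ\phi\sim\id_G$ between the two factors of a product; and (iv) $\Psi_\ast\circ\Phi_\ast = \id$ and $\Phi_\ast\circ\Psi_\ast = \id$, using $\phi\circ\bar\phi\sim\id_{\ucov{X}}$ and $\bar\phi\circ\phi\sim\id_G$. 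Hence $\Phi_\ast$ is the desired isomorphism; by (ii) it does not depend on the choice of $\bar\phi$, and it is exactly the isomorphism induced by the orbit map, namely conjugation by $\phi$.

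The only step that requires genuine input is the first paragraph---that the orbit map is a quasi-isometry---and this rests entirely on facts already assembled above: that $d_X$ makes $\ucov{X}$ a proper geodesic (length) space, and that the deck action is isometric, proper, and cocompact. Once those are in place the conjugation argument is the classical proof that quasi-isometric spaces have isomorphic quasi-isometry groups, and no further subtlety arises.
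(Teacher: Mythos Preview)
Your proof is correct and follows exactly the paper's approach: apply the Milnor--Schwarz lemma to see that the orbit map is a quasi-isometry, then invoke the standard fact that a quasi-isometry induces an isomorphism of quasi-isometry groups by conjugation. The paper simply cites \cite{bridson-haefliger}*{Proposition I.8.19} and \cite{bridson-haefliger}*{Exercise I.8.16.3} for these two steps, whereas you have spelled out the verification of the Milnor--Schwarz hypotheses and the conjugation argument in detail.
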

\begin{proof}
By the Milnor-Schwarz lemma~\cite{bridson-haefliger}*{Proposition I.8.19}, the orbit map $g\mapsto g\ast$ is a quasi-isometry $G\to \ucov{X}$. This induces an isomorphism $\QI(G) \cong \QI(\ucov{X})$~\cite{bridson-haefliger}*{Exercise I.8.16.3}.
\end{proof}

In order to relate the topology of the full solenoid $(\sol{X},\ast)$ over $(X,\ast)$ to the coarse geometry of $G = \pi_1(X,\ast)$ in \RefSec{he-qi}, we will further require that the baseleaf map $\blmap \from \ucov{X} \to \sol{X}$ be an inclusion. By  \RefCor{baseleaf-inj}, this is equivalent to requiring that $G$ be residually finite.

\subsection{Profinite completions}

When a group $G$ is finitely generated, for each $n\geq 1$ there is a finite-index characteristic subgroup
\[
\Gn = \bigcap_{[G:H]\leq n} H.
\]
The subgroups $\Gn$ are a neighborhood basis of the identity for the topology on $G$ induced by the  pseudometric $\dpro$ defined by
\[
\dpro(g_1,g_2) = \exp( - \max \set{ n \suchthat g_1g_2^{-1} \in \Gn } )
\] 
if such a maximum exists, and $\dpro(g_1,g_2) = 0$ if $g_1g_2^{-1} \in H$ for all $H\fidx G$. 

If $G$ is residually finite then $\dpro$ is an ultrametric and the induced topology is Hausdorff.
In this case, the profinite completion $\profin{G}$ is the metric completion
of $G$ with respect to $\dpro$~\cite{hall}*{Theorem 3.5}, the natural homomorphism $\pfi \from G \to \profin{G}$ is an
inclusion, and $G$ acts freely by isometries on $\profin{G}$ by both left and right multiplication.

\subsection{Solenoid metric}
Now suppose $X$ is an unloopable compact geodesic metric space with residually finite fundamental group $G = \pi_1(X,\ast)$. Then $G$ is finitely presented, so by the above discussion there is a profinite metric $\dpro$ on $\profin{G}$ and and a proper, geodesic metric $d_X$ on $\ucov{X}$. Equip $\profin{G} \times \ucov{X}$ with the
	$\ell_\infty$ product metric 
	\[
	d_\infty( (\gamma_1,x_1), (\gamma_2,x_2) ) = \max \set{ \profin{d}(\gamma_1,\gamma_2), d_X(x_1,x_2)}.
	\]

Both the action of $G$ on $(\profin{G}, \dpro)$ by right multiplication
and the left action of $G$ on $(\ucov{X}, d_X)$ are isometric. Hence the left action of $G$ on
$\profin{G}\times\ucov{X}$ defined in \RefSec{full-solenoids} is isometric with respect to  $d_\infty$.
	
\begin{definition}
	Suppose $X$ is an unloopable compact geodesic metric space with residually finite fundamental group $G = \pi_1(X,\ast)$. The {\em solenoid metric} $\sigma$ on $\sol{X}$ is the metric induced from the quotient metric on $\solmod{G}{X}$, obtained by taking the infimum of $d_\infty$ over orbit
	representatives, under the identification $\sol{X} \homeo \solmod{G}{X}$.
\end{definition}

\subsection{Metric balls in a full solenoid} Continue to consider an unloopable compact geodesic metric space $X$ with residually finite fundamental group $G = \pi_1(X,\ast)$.
	\begin{definition}
The \emph{injectivity radius} of an unloopable metric space $(Y, d_Y)$, denoted $\injrad(Y)$, is the supremum of all $R$ such that for all $x\in\ucov{Y}$ the covering projection of
	$B_{d_Y}(x, R)$ is an isometry onto its image.
\end{definition}

If $X$ is an unloopable compact geodesic metric space, then $0 < \injrad(X) < \infty$. Note that for all $g\in \pi_1(X,\ast)$ and $x\in\ucov{X}$, the inequality $d_X(g\cdot x, x)
\geq \injrad(X)$ holds.

\begin{lemma}
\LabLem{small-ball-product}
Suppose $X$ is an unloopable compact geodesic metric space with residually finite fundamental group $G = \pi_1(X,\ast)$, and suppose $0 < \epsilon < \injrad(X)/4$. Then for any $\sol{x} = [ (\gamma, x)] \in \solmod{G}{X}$, there is an isometry
\[
B_\sigma( \sol{x}, \epsilon ) \cong B_{\sol{d}}(\gamma, \epsilon) \times B_{d_{\ucov{X}}}(x, \epsilon).
\]
In particular, each path component of $B_\sigma( \sol{x}, \epsilon)$ is isometrically identified with $B_{d_X}(x, \epsilon)$.
\end{lemma}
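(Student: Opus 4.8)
The plan is to exhibit $B_\sigma(\sol{x},\epsilon)$ as an isometric image of a small $d_\infty$-ball in $\profin{G}\times\ucov{X}$, and then observe that such a ball is, by construction, an $\ell_\infty$ product of balls. Throughout I use the identification $\sol{X}\homeo\solmod{G}{X}$ of \RefThm{metric-model-new}, under which $\sigma$ is the quotient metric of the isometric $G$-action $g\cdot(\gamma,x)=(\gamma\pfi(g)^{-1},gx)$; since this action is isometric one may write
\[
\sigma\big([z_1],[z_2]\big)=\inf_{g\in G} d_\infty\big(z_1,g\cdot z_2\big)
\]
for any representatives $z_1,z_2\in\profin{G}\times\ucov{X}$. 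Write $q\from\profin{G}\times\ucov{X}\to\sol{X}$ for the quotient map.

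First I would fix a representative $(\gamma,x)$ of $\sol{x}$, set $B=B_{d_\infty}((\gamma,x),\epsilon)$, and prove that $q|_B$ is an isometry onto $B_\sigma(\sol{x},\epsilon)$. That $q(B)\subseteq B_\sigma(\sol{x},\epsilon)$ is immediate from the displayed formula (take $g=e$). For surjectivity, if $\sigma([z'],\sol{x})<\epsilon$ then $d_\infty(z',g\cdot(\gamma,x))<\epsilon$ for some $g\in G$, hence $g^{-1}\cdot z'\in B$ is a representative of $[z']$, so $[z']\in q(B)$. For the isometry property, take $z_1,z_2\in B$: at $g=e$ the triangle inequality gives $d_\infty(z_1,z_2)<2\epsilon$, while for $g\neq e$ the triangle inequality gives
\[
d_\infty\big(z_1,g\cdot z_2\big)\ \geq\ d_\infty\big((\gamma,x),g\cdot(\gamma,x)\big)-d_\infty\big((\gamma,x),z_1\big)-d_\infty\big((\gamma,x),z_2\big)\ >\ \injrad(X)-2\epsilon\ >\ 2\epsilon,
\]
using that $d_\infty((\gamma,x),g\cdot(\gamma,x))\geq d_X(x,gx)\geq\injrad(X)$ for $g\neq e$ (the second inequality is recorded just before the lemma) together with $\epsilon<\injrad(X)/4$. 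Thus the infimum defining $\sigma(qz_1,qz_2)$ is attained at $g=e$, so $\sigma(qz_1,qz_2)=d_\infty(z_1,z_2)$.

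Next I would observe that $B=B_{\sol{d}}(\gamma,\epsilon)\times B_{d_{\ucov{X}}}(x,\epsilon)$ as a set, directly from the definition of the $\ell_\infty$ product metric, and that the restriction of $d_\infty$ to this product is again the $\ell_\infty$ product of the restricted metrics. Composing this identification with $q|_B$ yields the asserted isometry $B_\sigma(\sol{x},\epsilon)\cong B_{\sol{d}}(\gamma,\epsilon)\times B_{d_{\ucov{X}}}(x,\epsilon)$. For the last sentence I would note that $B_{\sol{d}}(\gamma,\epsilon)$ is totally disconnected, being an open subspace of the profinite group $\profin{G}$, whereas $B_{d_{\ucov{X}}}(x,\epsilon)$ is path-connected, since $\ucov{X}$ is a proper geodesic space (by the metric Hopf--Rinow theorem) and every point of the ball is joined to $x$ by a geodesic lying inside the ball. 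Hence the path components of the product are precisely the slices $\{\gamma'\}\times B_{d_{\ucov{X}}}(x,\epsilon)$, and $d_\infty$ restricted to such a slice is just $d_X$, so each path component is isometric to $(B_{d_X}(x,\epsilon),d_X)$.

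The step I expect to be the crux is the ball-separation estimate in the second paragraph: one must check that the radius bound $\epsilon<\injrad(X)/4$ is exactly what forces the quotient infimum to be realized at the identity of $G$, which rests on the fact that the $G$-action displaces the $\ucov{X}$-coordinate by at least $\injrad(X)$. The remaining ingredients---the product structure of $d_\infty$-balls and the elementary topology of profinite groups and of geodesic balls---are routine.
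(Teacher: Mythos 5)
Your proposal is correct and follows essentially the same route as the paper: identify the $\sigma$-ball with the $d_\infty$-ball $B_{\sol{d}}(\gamma,\epsilon)\times B_{d_{\ucov{X}}}(x,\epsilon)$, using the displacement bound $d_X(x,gx)\geq\injrad(X)$ for $g\neq e$ together with the reverse triangle inequality and $\epsilon<\injrad(X)/4$ to show the quotient map is injective and isometric on that ball. You supply a few details the paper leaves implicit (the orbit formula for the quotient metric, surjectivity onto the $\sigma$-ball, and the identification of path components), but the argument is the same.
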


\begin{proof}
Take any $(\gamma,x) \in \profin{G} \times \ucov{X}$. Consider $B = B_{d_\infty}( (\gamma,x), \epsilon) = B_{\sol{d}}(\gamma, \epsilon) \times B_{d_X}(x, \epsilon)$. Then for any $y\in B_{d_X}(x,\epsilon)$ and any $g\in G$, we have $d_X(y, gy) \geq \injrad(X) > 4\epsilon > 2\epsilon$ and therefore $gy\notin B_{d_X}(x,\epsilon)$. It follows that the ball $B$ maps injectively into $\sol{X}$.

To see that the inclusion $B\to \sol{X}$ is isometric onto its image, it suffices to show that if $y,z \in B_{d_X}(x,\epsilon)$ and $g\in G$, then $d_X(y,gz) \geq 2\epsilon$. This follows from the reverse triangle inequality:
\[
d_X(y,gz) \geq d_X(z,gz) - d_X(y,z) \geq \injrad(X) - 2\epsilon > 2\epsilon.   \qedhere
\]
\end{proof}

\section{Homotopy and baseleaf quasi-isometry} \LabSec{he-qi}

In general, compactness improves the behavior of continuous functions between
metric spaces. In the setting of full solenoids over spaces with residually finite fundamental group, compactness allows
one to translate from the topology of the solenoid to the coarse
geometry of the baseleaf. This has been studied in the context of foliated manifolds, for example in work of Hurder \cite{hurder-coarse}*{\S2.4}.

\begin{definition}
	A function $f\from (X, d_X) \to (Y, d_Y)$ of metric spaces is
	\emph{$(L, C)$-coarsely Lipschitz} if for all $x, y\in X$, if
	for all $x, y \in X$
		\[ d_Y(f(x), f(y)) \leq L d_X(x,y) + C. \]
\end{definition}

Given an unloopable pointed space $(X,\ast)$, any pointed continuous map $f\from (\sol{X}, \ast) \to (\sol{X},\ast)$ maps the baseleaf into itself. When $\pi_1(X,\ast)$ is residually finite, the baseleaf map $\blmap \from \ucov{X} \to \sol{X}$ is a bijection onto its image by \RefCor{baseleaf-inj}. Identifying $\ucov{X}$ with its image in $\sol{X}$, say that $f$ restricts to a function $f_{\ucov{X}} \from (\ucov{X}, \ast) \to (\ucov{X},\ast)$.

\begin{lemma}
\LabLem{uniform-continuous-coarse-lipschitz}
Suppose $(X,\ast)$ is an unloopable compact geodesic metric
	space with residually finite fundamental group $G = \pi_1(X,\ast)$. Suppose $f\from (\sol{X},\ast) \to (\sol{X},\ast)$ is a pointed
	continuous function. Then for any $C > 0$ there exists
	$\blip_C(f)>0$ such that $f_{\ucov{X}} \from (\ucov{X}, d_X) \to (\ucov{X},
	d_X)$ is $(\blip_C(f), C)$-coarsely Lipschitz.
\end{lemma}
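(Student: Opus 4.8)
The plan is to exploit compactness of $\sol{X}$ to upgrade the (automatic) uniform continuity of $f$ into a uniform bound that, when transported to the baseleaf, becomes coarse Lipschitz control. First I would fix $C > 0$ and set $\epsilon = \min\{C, \injrad(X)\}/8$ or some such small positive constant, in any case with $\epsilon < \injrad(X)/4$ so that \RefLem{small-ball-product} applies. Since $\sol{X}$ is compact Hausdorff and $\sigma$ metrizes it (the solenoid metric), $f$ is uniformly continuous: there is $\delta > 0$ such that $\sigma(\sol{x},\sol{y}) < \delta$ implies $\sigma(f(\sol{x}), f(\sol{y})) < \epsilon$. I would shrink $\delta$ if necessary so that $\delta < \epsilon$ as well.

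The key geometric input is \RefLem{small-ball-product}: inside any $\sigma$-ball of radius less than $\injrad(X)/4$, each path component is isometric to the corresponding metric ball in $\ucov{X}$, and in particular the baseleaf restriction $f_{\ucov{X}}$ can be analyzed leaf-locally. Concretely, if $x, y \in \ucov{X}$ are at distance $d_X(x,y) < \delta$, then $\blmap(x)$ and $\blmap(y)$ lie in a common path component of a $\sigma$-ball of radius $\delta$, hence $\sigma(\blmap(x), \blmap(y)) \le d_X(x,y) < \delta$; uniform continuity gives $\sigma(f\blmap(x), f\blmap(y)) < \epsilon$; and since $f$ preserves the baseleaf, $f\blmap(x)$ and $f\blmap(y)$ are points of the baseleaf within $\sigma$-distance $\epsilon < \injrad(X)/4$ of each other, so by \RefLem{small-ball-product} they lie in the same leaf-component of an $\epsilon$-ball and $d_X(f_{\ucov{X}}(x), f_{\ucov{X}}(y)) < \epsilon \le C$. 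This establishes the estimate $d_X(f_{\ucov{X}}(x), f_{\ucov{X}}(y)) \le C$ whenever $d_X(x,y) < \delta$.

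The final step is a standard chaining argument: for arbitrary $x, y \in \ucov{X}$, since $\ucov{X}$ is a geodesic metric space one subdivides a geodesic from $x$ to $y$ into at most $\lceil d_X(x,y)/\delta \rceil + 1$ consecutive points, each pair at distance $< \delta$, and applies the previous paragraph to each link together with the triangle inequality to obtain $d_X(f_{\ucov{X}}(x), f_{\ucov{X}}(y)) \le (C/\delta)\, d_X(x,y) + C$. Setting $\blip_C(f) = C/\delta$ completes the proof. I expect the main (modest) obstacle to be the careful bookkeeping in the leaf-local step: one must verify that $f$ genuinely sends the relevant pieces of baseleaf into a single path component of a small $\sigma$-ball — this uses that $f\blmap$ is continuous along the baseleaf and that the baseleaf is path connected (\RefThm{metric-model-new}(2)), so the image of a short geodesic segment is connected and hence, once it has small diameter, lies in one path component of the product-ball decomposition. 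Everything else is uniform continuity on a compact space plus the geodesic chaining, both routine.
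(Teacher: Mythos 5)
Your proposal is correct and follows essentially the same route as the paper: uniform continuity of $f$ on the compact solenoid, \RefLem{small-ball-product} to convert $\sigma$-closeness into leafwise $d_X$-closeness, and a geodesic chaining argument. One caution: the step in your second paragraph asserting that two baseleaf points within $\sigma$-distance $\epsilon$ of each other automatically lie in the same path component of an $\epsilon$-ball is false as stated (the baseleaf is dense, so it meets any small ball in infinitely many plaques); the correct justification is the one you supply only at the end --- the image $f(\gamma)$ of the short geodesic is a connected subset of $B_\sigma(f(x),\epsilon)$ containing both image points, hence lies in the single path component $B_{d_X}(f(x),\epsilon)$ --- which is precisely the argument the paper uses.
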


\begin{proof}
Since $X$ is compact and Hausdorff,
	$\solmod{G}{X}$ is compact. Therefore, since $f$ is continuous it is
	uniformly continuous in the $\dsol$ metric on $\sol{X}$.

Take $0 < \epsilon < \min\set{\injrad(X)/4, C}$, and choose $0 < \delta <
	\epsilon$ such that for all $x, y\in \sol{X}$ if $\dsol(x, y) <
	\delta$ then $\dsol(f(x), f(y)) < \epsilon$. Now suppose $x, y\in
	\ucov{X}$ and $d_X(x, y) < \delta$. This implies $\dsol(x, y) < \delta$
	by definition. Let $\gamma$ be a $d_X$ geodesic joining $x$ to $y$.
	Since $\gamma$ is a geodesic, $\gamma \subset B_{d_X}(x,\delta)\subseteq B_{\dsol}(x,\delta)$.
	Therefore by uniform continuity,
	$f(\gamma) \subseteq B_{\dsol}(f(x),\epsilon)$, so $f(x)$ and $f(y)$
	are in the same path component of $B_{\dsol}(f(x),\epsilon)$. The path
	component of $B_{\dsol}(f(x),\epsilon)$ containing $f(x)$ is simply
	$B_{d_X}(f(x),\epsilon)$ by \RefLem{small-ball-product}. Therefore if $d_X(x,y) < \delta$ then
	$d_X(f(x), f(y)) < \epsilon$.

Next, suppose $x, y \in \ucov{X}$ and $M = d_X(x, y) \geq \delta$. Divide a
	geodesic between $x$ and $y$ into $N = \floor{\frac{M}{\delta} } \geq 1$
	length $\delta$ segments, with endpoints $x = t_0, t_1, \ldots, t_N =
	y$. By repeatedly applying the triangle inequality we estimate
	\[ d_X(f(x), f(y)) \leq N\epsilon \leq \frac{\epsilon}{\delta}d_X(x,y). \]

Since $\epsilon < C$, by setting $\blip_C(f) = \frac{\epsilon}{\delta}$ we
	conclude that for all $x, y \in \ucov{X}$
	\[  d_X(x, y) < \blip_C(f)d_X(x, y) + C.\qedhere\]
\end{proof}

\begin{lemma}
\LabLem{homotopy-bounded}
	Suppose $(X,\ast)$ is an unloopable compact geodesic metric space with residually finite fundamental group.
	Given continuous pointed functions $f, g\from (\sol{X},\ast) \to
	(\sol{X},\ast)$ such that $f \sim g$, there exists a constant $C$ such that
	for all $p\in \ucov{X}$,
	\[ d_X(f_{\ucov{X}}(p), g_{\ucov{X}}(p)) \leq C.\]
\end{lemma}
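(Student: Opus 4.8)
The plan is to express $d_X(f_{\ucov{X}}(p), g_{\ucov{X}}(p))$ as the total $d_X$-displacement along the image in the baseleaf of the path $t\mapsto H(\blmap(p),t)$, for a homotopy $H$ from $f$ to $g$, and then to bound that displacement step by step using uniform continuity of $H$ together with the local product structure of \RefLem{small-ball-product}.

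First I would fix a homotopy $H\from \sol{X}\times[0,1]\to \sol{X}$ with $H_0 = f$ and $H_1 = g$. Since $X$ is compact Hausdorff, $\sol{X}$ is compact, so $\sol{X}\times[0,1]$ is compact and $H$ is uniformly continuous with respect to the solenoid metric $\sigma$ on the first factor. Choosing $\epsilon$ with $0<\epsilon<\injrad(X)/4$, uniform continuity provides a subdivision $0 = t_0 < t_1 < \cdots < t_N = 1$ so that $\sigma(H(y,t_i), H(y,t)) < \epsilon$ for every $y\in\sol{X}$ and every $t\in[t_i,t_{i+1}]$; crucially $N$ depends only on $H$ and $\epsilon$, not on any point of $\sol{X}$.

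Now fix $p\in\ucov{X}$ and identify it with $\blmap(p)$ in the baseleaf. The path $t\mapsto H(\blmap(p),t)$ starts at $f(\blmap(p))$, a point of the baseleaf, so by \RefThm{metric-model-new} (the baseleaf is a path component of $\sol{X}$) the whole path lies in the baseleaf; write $H(\blmap(p),t_i) = \blmap(z_i)$, so $z_0 = f_{\ucov{X}}(p)$ and $z_N = g_{\ucov{X}}(p)$. For each $i$, the subpath $s\mapsto H(\blmap(p),(1-s)t_i + s t_{i+1})$ remains in $B_\sigma(\blmap(z_i),\epsilon)$ by the choice of subdivision, so $\blmap(z_{i+1})$ lies in the path component of $\blmap(z_i)$ in that ball. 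Applying \RefLem{small-ball-product} to the representative $\blmap(z_i) = [(\id, z_i)]$ identifies this path component isometrically with $B_{d_X}(z_i,\epsilon)$, compatibly with the baseleaf identification, so $d_X(z_i, z_{i+1}) < \epsilon$. Summing over $i$ via the triangle inequality yields $d_X(f_{\ucov{X}}(p), g_{\ucov{X}}(p)) < N\epsilon$, and $C = N\epsilon$ works uniformly in $p$.

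The main obstacle is the bookkeeping in this last step: one must verify that the path component of a baseleaf point inside a small $\sigma$-ball is genuinely contained in the baseleaf, and that the isometry of \RefLem{small-ball-product} carries the identification $\text{baseleaf}\cong\ucov{X}$ to the metric $d_X$ on $\ucov{X}$. This rests on describing the baseleaf as $\set{[(\id,y)]\suchthat y\in\ucov{X}}$, which is a faithful parametrization because residual finiteness makes $\blmap$ injective (\RefCor{baseleaf-inj}), together with the elementary observation that for $d_X(y,y') < \injrad(X)/2$ the infimum defining $\sigma([(\id,y)],[(\id,y')])$ is attained at the identity of $G$ and hence equals $d_X(y,y')$.
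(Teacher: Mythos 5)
Your proposal is correct and follows essentially the same route as the paper's proof: uniform continuity of the homotopy on the compact space $\sol{X}\times[0,1]$ yields a subdivision of $[0,1]$ into finitely many $\delta$-intervals, \RefLem{small-ball-product} converts each small $\sigma$-displacement along the track of a baseleaf point into a $d_X$-displacement less than $\epsilon$, and the triangle inequality sums these to a bound uniform in $p$. The extra bookkeeping you flag (the path starting in the baseleaf staying in the baseleaf, and the local isometry respecting the identification $\ucov{X}\cong$ baseleaf) is exactly what the paper's invocation of \RefThm{metric-model-new} and \RefLem{small-ball-product} supplies.
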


\begin{proof}
Take $0 < \epsilon < \injrad(X)/4$. Let $F$ 
	be a homotopy witnessing $f\sim g$. Since $\sol{X}\times[0,1]$ is
	compact, $F$ is
	uniformly continuous with respect to the $\ell_\infty$ product metric $d_\infty$. Choose $\delta > 0$ such that for all $(x,t),
	(y,s) \in \sol{X}\times [0,1]$,
	\[ d_\infty( (x,t), (y,s) ) < \delta 
		\Rightarrow \dsol( F(x,t), F(y,s)) < \epsilon. \]
	
Choose $0 = t_0 < t_1 < \cdots < t_L = 1$ in $[0,1]$ such that $|t_{n+1} -
	t_{n} | < \delta$. Given any $p\in \ucov{X}$, the image
	$F(\{p\}\times [t_n,t_{n+1}])$ is a path connected subset of
	$B_{\dsol}( F(p,t_n), \epsilon)$. By our choice of $\epsilon$, \RefLem{small-ball-product} implies $F(\{p\}\times[t_n,t_{n+1}])$ is in fact
	contained in $B_{d_X}(F(p,t_n),\epsilon)$ in $(\ucov{X}, d_X)$, thus 
		\[ d_X( F(p,t_n), F(p, t_{n+1})) < \epsilon. \]
	
Therefore, for any $p\in\ucov{X}$, repeated application of the triangle
	inequality gives
	\[ d_X(f_{\ucov{X}}(p), g_{\ucov{X}}(p))\leq L \cdot \epsilon. \qedhere\]
\end{proof}

Let $\HE(\sol{X},\ast)$ be the collection of all pointed homotopy equivalences $f \from (\sol{X}, \ast) \to (\sol{X}, \ast)$. Note that $\HE(\sol{X},\ast)$ is a monoid, and $\HMod(\sol{X}, \ast)$ is the quotient group consisting of pointed homotopy classes.

\begin{corollary}
\LabCor{fg-coarse-inverses}
Given $f, g \in \HE(\sol{X},\ast)$ such that $g$ is a homotopy inverse of $f$, there exists
an $C$ such that for all $p\in \ucov{X}$,
	\[ d_X( f\circ g(p), p ) \leq C 
		\qquad\text{and}\qquad 
	   d_X(g\circ f(p),p) \leq C. \]
\end{corollary}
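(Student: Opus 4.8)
The plan is to deduce the corollary immediately from \RefLem{homotopy-bounded}, applied to the pairs $(f\circ g,\, \id_{\sol{X}})$ and $(g\circ f,\, \id_{\sol{X}})$. First I would record the bookkeeping needed to make sense of the statement. Since $\pi_1(X,\ast)$ is residually finite, \RefCor{baseleaf-inj} lets us identify $\ucov{X}$ with the baseleaf $\blmap(\ucov{X}) \subseteq \sol{X}$. Any pointed continuous self-map of $(\sol{X},\ast)$ carries the path-component of $\ast$ into itself, so it restricts to a pointed self-map of the baseleaf; this restriction is visibly compatible with composition and sends $\id_{\sol{X}}$ to $\id_{\ucov{X}}$. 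Hence $(f\circ g)_{\ucov{X}} = f_{\ucov{X}}\circ g_{\ucov{X}}$ and likewise for $g\circ f$, and the expression $f\circ g(p)$ appearing in the corollary is understood as $(f\circ g)_{\ucov{X}}(p)$ for $p\in\ucov{X}$.

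Next, because $g$ is a homotopy inverse of $f$ in the monoid $\HE(\sol{X},\ast)$, there are pointed homotopies $f\circ g \sim \id_{\sol{X}}$ and $g\circ f\sim \id_{\sol{X}}$. Applying \RefLem{homotopy-bounded} to the pair $(f\circ g, \id_{\sol{X}})$ produces a constant $C_1$ with $d_X\big((f\circ g)_{\ucov{X}}(p),\, p\big)\leq C_1$ for all $p\in\ucov{X}$, and applying it to the pair $(g\circ f,\id_{\sol{X}})$ produces a constant $C_2$ with $d_X\big((g\circ f)_{\ucov{X}}(p),\, p\big)\leq C_2$ for all $p\in\ucov{X}$. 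Setting $C = \max\{C_1, C_2\}$ yields both inequalities simultaneously.

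I do not expect any genuine obstacle: all the content is already contained in \RefLem{homotopy-bounded}, and the corollary is just the observation that the hypothesis ``$g$ is a homotopy inverse of $f$'' supplies two instances of the situation considered there, in each of which one of the two maps is the identity. The only point requiring a moment of care is the identification of $\ucov{X}$ with the baseleaf together with the fact that passing to this restriction is functorial, both of which follow routinely from \RefCor{baseleaf-inj} and the definition of the baseleaf.
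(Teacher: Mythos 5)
Your proposal is correct and is exactly the argument the paper intends: the corollary is stated without proof as an immediate consequence of \RefLem{homotopy-bounded} applied to the pointed homotopies $f\circ g \sim \id$ and $g\circ f \sim \id$, with the max of the two resulting constants. Your additional remarks on identifying $\ucov{X}$ with the baseleaf via \RefCor{baseleaf-inj} and the functoriality of restriction match the conventions already set up before \RefLem{uniform-continuous-coarse-lipschitz}.
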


Combining this corollary with \RefLem{uniform-continuous-coarse-lipschitz} we
arrive at an exact description the coarse behavior of pointed
homotopy equivalences.

\begin{lemma}
\LabLem{he-qi-pointed}
	Suppose $(X,\ast)$ is an unloopable compact geodesic metric space with residually finite fundamental group.
	For each $f\in \HE(\sol{X},\ast)$, the restriction $f_{\ucov{X}}$ is a
	quasi-isometry of $(\ucov{X}, d_X)$.
\end{lemma}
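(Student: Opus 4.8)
The plan is to combine the two coarse-geometry estimates already established: \RefLem{uniform-continuous-coarse-lipschitz}, which shows that the baseleaf restriction of any pointed continuous self-map of $\sol{X}$ is coarsely Lipschitz, and \RefCor{fg-coarse-inverses}, which shows that a homotopy inverse restricts to a coarse inverse on the baseleaf. Together these are exactly the ingredients needed to verify the definition of a quasi-isometry.

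In detail, suppose $f \in \HE(\sol{X},\ast)$ with homotopy inverse $g \in \HE(\sol{X},\ast)$. First, fix any constant $C > 0$ and apply \RefLem{uniform-continuous-coarse-lipschitz} to both $f$ and $g$: this produces constants $\blip_C(f), \blip_C(g) > 0$ so that $f_{\ucov{X}}$ is $(\blip_C(f), C)$-coarsely Lipschitz and $g_{\ucov{X}}$ is $(\blip_C(g), C)$-coarsely Lipschitz. Set $L = \max\set{\blip_C(f), \blip_C(g), 1}$. The upper bound $d_X(f_{\ucov{X}}(p), f_{\ucov{X}}(q)) \leq L\, d_X(p,q) + C$ is then immediate. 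For the lower bound, combine the coarse Lipschitz estimate for $g_{\ucov{X}}$ with \RefCor{fg-coarse-inverses}: there is a constant $C'$ with $d_X(g_{\ucov{X}} \circ f_{\ucov{X}}(p), p) \leq C'$ for all $p \in \ucov{X}$, so the triangle inequality gives
\[
d_X(p,q) \leq d_X(g_{\ucov{X}} f_{\ucov{X}}(p), g_{\ucov{X}} f_{\ucov{X}}(q)) + 2C' \leq L\, d_X(f_{\ucov{X}}(p), f_{\ucov{X}}(q)) + C + 2C',
\]
which rearranges to the desired lower bound $(1/L)(d_X(p,q) - C - 2C') \leq d_X(f_{\ucov{X}}(p), f_{\ucov{X}}(q))$. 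Finally, for coarse surjectivity of $f_{\ucov{X}}$: given any point $y \in \ucov{X}$, the point $f_{\ucov{X}}(g_{\ucov{X}}(y))$ lies within $C'$ of $y$ by \RefCor{fg-coarse-inverses} (in the other order, which that corollary also supplies), so $y$ is in the $C'$-neighborhood of the image of $f_{\ucov{X}}$. Absorbing all additive constants into a single constant completes the verification that $f_{\ucov{X}}$ is a quasi-isometry of $(\ucov{X}, d_X)$.

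There is essentially no obstacle here; the statement is a packaging of the preceding lemma and corollary. The only point requiring a moment's care is that \RefCor{fg-coarse-inverses} is stated for $f, g \in \HE(\sol{X},\ast)$ with $g$ a homotopy inverse of $f$, which is exactly our situation — in particular $g$ itself lies in $\HE(\sol{X},\ast)$, so \RefLem{uniform-continuous-coarse-lipschitz} applies to it as well — and that the same constant $L$ can be used for both the upper bound and (via $g$) the lower bound after taking a maximum.
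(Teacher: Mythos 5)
Your proposal is correct and follows essentially the same route as the paper: both arguments apply \RefLem{uniform-continuous-coarse-lipschitz} to $f$ and to a homotopy inverse $g$, and combine this with \RefCor{fg-coarse-inverses}. The only difference is that you verify the quasi-isometry axioms by hand, whereas the paper cites the standard characterization of quasi-isometries via coarse Lipschitz coarse inverses (Dru\c{t}u--Kapovich, Corollary 8.13); your explicit verification is accurate.
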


\begin{proof}
	Given any  $f\in \HE(\sol{X},\ast)$, fix a homotopy inverse $g$. By \RefCor{fg-coarse-inverses} there exists
	a $C$ such that for all $p\in \ucov{X}$, we have $d_X (f\circ g(p), p) \leq
	C$ and $d_X(g \circ f(p), p) \leq C$.

	Next, by \RefLem{uniform-continuous-coarse-lipschitz} with $L =
	\max\{\blip_C(f), \blip_C(g)\}$, both $f$ and $g$ are $(L, C)$-coarse
	Lipschitz on $(\ucov{X}, d_X)$. This is a characterization of an
	$(L, C)$-quasi-isometry  of $(\ucov{X}, d_X)$
	\cite{drutu-kapovich}*{Corollary 8.13}
	and we are done.
\end{proof}

\begin{proposition}
\LabProp{modb-qi-morphism}
	Suppose $(X,\ast)$ is an unloopable compact geodesic metric space with residually finite fundamental	group $G = \pi_1(X,\ast)$.
	There is a group homomorphism
		\[ Q\from \HMod(\sol{X},\ast) \to \QI(\ucov{X})\cong \QI(G) \]
	given by $[f] \mapsto f_{\ucov{X}}$ for a representative
	$f\in\HE(\sol{X},\ast)$.
\end{proposition}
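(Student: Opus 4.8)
The plan is to assemble the proposition from the lemmas of this section. The assignment in question sends a pointed homotopy equivalence $f \in \HE(\sol{X},\ast)$ to the restriction $f_{\ucov{X}} \from (\ucov{X},d_X) \to (\ucov{X},d_X)$; by \RefLem{he-qi-pointed} this restriction is a quasi-isometry, so it represents an element of $\QI(\ucov{X})$. There are then three things to verify: that $Q$ is well-defined on pointed homotopy classes, that the assignment respects the monoid structure on $\HE(\sol{X},\ast)$ and hence descends to a group homomorphism on $\HMod(\sol{X},\ast)$, and that one may pass between $\QI(\ucov{X})$ and $\QI(G)$.

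First I would check well-definedness. If $f \sim g$ in $\HE(\sol{X},\ast)$, then \RefLem{homotopy-bounded} produces a constant $C$ with $d_X(f_{\ucov{X}}(p), g_{\ucov{X}}(p)) \leq C$ for every $p\in\ucov{X}$, which is exactly the relation defining equivalence of quasi-isometries. Hence $[f]\mapsto [f_{\ucov{X}}]$ is a well-defined function $\HMod(\sol{X},\ast) \to \QI(\ucov{X})$.

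Next I would verify that $Q$ is multiplicative. Recall from the discussion preceding \RefLem{uniform-continuous-coarse-lipschitz} that every pointed continuous self-map of $\sol{X}$ carries the baseleaf into itself, so that, under the identification of $\ucov{X}$ with the baseleaf provided by \RefCor{baseleaf-inj}, restriction to the baseleaf is a monoid homomorphism: $(f\circ g)_{\ucov{X}} = f_{\ucov{X}}\circ g_{\ucov{X}}$ and $(\id_{\sol{X}})_{\ucov{X}} = \id_{\ucov{X}}$. Since composition of quasi-isometries descends to a well-defined product on $\QI(\ucov{X})$, it follows that $Q([f])\cdot Q([g]) = Q([f]\cdot[g])$ and $Q$ sends the identity class to the identity, so $Q$ is a group homomorphism. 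Composing with the isomorphism $\QI(\ucov{X})\cong \QI(G)$ of \RefLem{baseleaf-QI-group-id} gives the stated map.

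The content of the proposition is really all in \RefLem{he-qi-pointed} (that $f_{\ucov{X}}$ is a quasi-isometry at all, which in turn rests on the coarse Lipschitz estimate of \RefLem{uniform-continuous-coarse-lipschitz} together with \RefCor{fg-coarse-inverses}) and \RefLem{homotopy-bounded}; the only remaining point requiring care is the compatibility of baseleaf restriction with composition, and this is immediate once one observes that continuous self-maps of $\sol{X}$ preserve the baseleaf. So I would not expect any genuine obstacle here: the proof is a short bookkeeping argument on top of the preceding lemmas.
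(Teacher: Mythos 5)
Your proposal is correct and follows essentially the same route as the paper: establish that $f_{\ucov{X}}$ is a quasi-isometry via \RefLem{he-qi-pointed}, use \RefLem{homotopy-bounded} for well-definedness on pointed homotopy classes, observe that baseleaf restriction is a monoid homomorphism, and compose with the isomorphism $\QI(\ucov{X})\cong\QI(G)$. No gaps.
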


\begin{proof}
	By \RefLem{he-qi-pointed}, given $f\in\HE(\sol{X},\ast)$,
	the restriction $f_{\ucov{X}}$ is a quasi-isometry of $\ucov{X}$.
	The assignment $f\mapsto f_{\ucov{X}}$ is a monoid homomorphism
	\[ \HE(\sol{X},\ast) \to \QI(\ucov{X}) \]

	Next, if $f, g\in\HE(\sol{X},\ast)$ and $f\sim g$, then by
	\RefLem{homotopy-bounded} there exists a constant $C$ such
	that for all $p\in \ucov{X}$, the distance $d_X(f(p), g(p)) \leq C$.
	Thus $f_{\ucov{X}}$ and $g_{\ucov{X}}$
	represent the same element of $\QI(\ucov{X})$. Therefore we obtain
	a monoid homomorphism
	\[ Q\from \HMod(\sol{X},\ast) \to \QI(\ucov{X}). \]
	Every monoid homomorphism of groups is a group
	homomorphism. The conclusion follows by composing with the isomorphism $\QI(\ucov{X}) \cong \QI(G)$ of \cref{lem:baseleaf-QI-group-id}.
\end{proof}

\begin{theorem}
\LabThm{qi-map-factors}
Suppose $(X,\ast)$ is an unloopable compact geodesic metric space with residually finite fundamental group $G = \pi_1(X,\ast)$. The homomorphism $Q\from \HMod(\sol{X},\ast)\to \QI(G)$ factors through the map $C\from \HMod(\sol{X},\ast)\to \Comm(G)$ of \RefThm{ptd-homotopy-comm}.
\end{theorem}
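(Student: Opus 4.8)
The plan is to show that $Q$ equals the composite of $C\from \HMod(\sol{X},\ast)\to\Comm(G)$ with the natural homomorphism $\nu\from\Comm(G)\to\QI(G)$ recalled in \RefSec{metric-notions}, where $\nu([\phi])$ is represented by precomposing $\phi$ with a closest-point projection from $G$ to the domain of $\phi$ (the map shown injective by Whyte). Since $\nu$ is defined on all of $\Comm(G)$, the identity $Q = \nu\circ C$ is exactly the asserted factorization, so it suffices to check that $Q([f]) = \nu(C([f]))$ for each $[f]\in\HMod(\sol{X},\ast)$.

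Fix a representative $f\in\HE(\sol{X},\ast)$. By \RefLem{covering-homotopy-limit} together with the injectivity statement established in the proof of \cref{prop:nat-pro-bijection}, $f$ is pointed homotopic to the inverse limit $\sol{g}$ in $\Cpt_\ast$ of a morphism $\mathbf{g} = (g_\mu)$ of the inverse systems of finite-sheeted covers. Replacing $f$ by $\sol{g}$ changes neither $C([f])$ (pointed homotopic maps induce the same pro-morphism) nor $Q([f])$, since by \RefLem{homotopy-bounded} the restrictions $f_{\ucov{X}}$ and $\sol{g}_{\ucov{X}}$ lie at bounded $d_X$-distance and so represent the same class in $\QI(\ucov{X})$. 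Unwinding the chain of natural bijections in the proof of \RefThm{ptd-homotopy-comm} and the proof of \RefProp{comm-is-pro-aut}, the class $C([f])\in\Comm(G)$ is represented by a partial automorphism $\phi\from H\to K$ which is a restriction to finite-index subgroups of $(g_{\mu_0})_\ast$ for a suitable index $\mu_0$, where $\pi_1$ of a finite cover is identified with the corresponding finite-index subgroup of $G$ by means of the fixed universal cover.

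The crux is to recognize $\sol{g}_{\ucov{X}}$ as the canonical equivariant lift of $g_{\mu_0}$. Because $\sol{g}$ is the limit of $\mathbf{g}$, its composite with the projection $\sol{X}\to X_{\mu_0}$ equals the composite of $g_{\mu_0}$ with a projection $\sol{X}\to X_{\mu_0'}$; restricting to the baseleaf $\ucov{X}\subseteq\sol{X}$ (which $\sol{g}$ preserves) and using that each projection restricts on $\ucov{X}$ to the covering map onto $X_\bullet$, we see that $\sol{g}_{\ucov{X}}$ is a pointed lift of $g_{\mu_0}\circ(\ucov{X}\to X_{\mu_0'})$ along the covering $\ucov{X}\to X_{\mu_0}$. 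The canonical pointed lift $\ucov{g_{\mu_0}}\from\ucov{X}\to\ucov{X}$ of $g_{\mu_0}$ is a second such lift, so by uniqueness of lifts of maps out of the connected, locally path-connected space $\ucov{X}$ we get $\sol{g}_{\ucov{X}} = \ucov{g_{\mu_0}}$. Consequently $\sol{g}_{\ucov{X}}$ is equivariant with respect to $(g_{\mu_0})_\ast$ and the deck actions, and in particular $\sol{g}_{\ucov{X}}(h\ast) = \phi(h)\ast$ for all $h\in H$.

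Finally, since $[G:H]<\infty$ the orbit $H\ast$ is coarsely dense in $\ucov{X}$, and $\sol{g}_{\ucov{X}}$ is a quasi-isometry by \RefLem{he-qi-pointed}; hence $\sol{g}_{\ucov{X}}$ lies at bounded distance from the map sending $x$ to $\phi(h)\ast$ for $h\ast$ a closest point of $H\ast$ to $x$. Under the orbit-map identification $\QI(G)\cong\QI(\ucov{X})$ of \RefLem{baseleaf-QI-group-id}, this latter map is precisely $\nu([\phi])$. Therefore $Q([f]) = [\sol{g}_{\ucov{X}}] = \nu([\phi]) = \nu(C([f]))$, which gives the factorization. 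The step I expect to demand the most care is the bookkeeping of the second paragraph: extracting the explicit partial automorphism $\phi$ from the proofs of \RefProp{comm-is-pro-aut} and \RefThm{ptd-homotopy-comm}, in particular the passage from the monomorphism/epimorphism characterization in $\pro{\Grp}$ to an honest isomorphism of finite-index subgroups, while keeping all identifications compatible with the fixed universal cover.
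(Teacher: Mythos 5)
Your proposal is correct and follows essentially the same route as the paper's proof: replace $f$ by the limit of a strict system morphism via \RefLem{covering-homotopy-limit}, identify $C([f])$ with a restriction of the induced map on fundamental groups, check that the baseleaf restriction is the canonical lift of $f_{\mu_0}$ and hence sends $h\ast$ to $\phi(h)\ast$, and conclude by coarse density of the finite-index orbit together with \RefLem{homotopy-bounded}. The only differences are expository (you make the uniqueness-of-lifts step and the invariance of $Q([f])$ under the homotopy replacement explicit, which the paper leaves implicit).
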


\begin{proof}
Suppose $[f]\in\HMod(\sol{X},\ast)$. By \RefLem{covering-homotopy-limit}, there
is an morphism of the covering system  $\{f_\alpha \from X_{\phi(\alpha)} \to X_\alpha\}$ with limit $\sol{f}$ homotopic to $f$. As in the proof of \RefThm{ptd-homotopy-comm}, the collection of induced maps $({f_\alpha}_\ast)$ is a $\pro{\Grp}$ automorphism of the inverse system of finite-index subgroups of $G$.

Pick an index $\alpha$ and set $H_0 = \pi_1(X_{\phi(\alpha)},\ast)$. As in the proof of \RefProp{comm-is-pro-aut}, there are finite-index subgroups $H,K\fidx G$ such that $\phi = \restr{{f_\alpha}_\ast}{H} \from H \to K$ is an isomorphism. By definition, $C([f]) = [\phi] \in \Comm(G)$.

The inclusion of $[\phi]$ into $\QI(G)$ is the composition of nearest point projection to $H$ and $\phi\from H\to K$. Let $(\ucov{X}, \ucobp) \to (X,\ast)$ be the universal cover of $X$. Under the identification $\QI(G) \cong \QI(\ucov{X})$ induced by the orbit map $g\mapsto g\ucobp$, the image of $[\phi]$ in $\QI(\ucov{X})$ is the map $h\ucobp \mapsto \phi(h)\ucobp$, which bijectively maps the $H$-orbit of the basepoint $\ucobp$ to the $K$-orbit of $\ucobp$, precomposed with a closest-point projection $\ucov{X} \to H\ucobp$.

Let $\ucov{f_\alpha} \from (\ucov{X}, \ucobp) \to (\ucov{X}, \ucobp)$ be the unique lift of $f_\alpha$. Since $\sol{f}$ is the limit of $f_\alpha$, for any point $h\ucobp$ in the $H$-orbit of the basepoint $\ucobp\in \ucov{X}$ we have
\[
    \sol{f}(h\ucobp) = \ucov{f_\alpha}(h\ucobp) = ({f_\alpha}_\ast(h)) \ucobp = \phi(h) \ucobp.
\]Since $H$ and $K$ are finite-index subgroups of $G$ this implies $\sol{f}$ is bounded distance from the image of $\phi$ in $\QI(\ucov{X})$. 
Therefore, by \RefLem{homotopy-bounded}, $f$ and $\phi$ are bounded distance maps, and we are done.
\end{proof}

\section{Hyperbolicity and the boundary realization} \LabSec{boundarycomm}

For a closed hyperbolic surface $\Sigma$, Odden proved the existence of an isomorphism
\[
\Comm(\pi_1(\Sigma,\ast)) \cong \Homeo(\sol{\Sigma},\ast) / \Homeo_0(\sol{\Sigma},\ast) 
\]
by proving that both groups are isomorphic to the relative commensurator of $\pi_1(\Sigma)$ in $\Homeo(\bdry \hypspace^2)$ \cite{odden}*{Theorems 4.6, 4.12}. We will prove that the abstract commensurator of any torsion-free, non-elementary  hyperbolic group $G$ is isomorphic to its relative commensurator in the group of homeomorphisms of its boundary, generalizing half of Odden's proof. We then provide a topological analogue of the other half of the proof in the case $G$ acts on a sufficiently nice metric space $X$ homotopy equivalent to a finite aspherical CW complex.

Suppose $G$ is a Gromov hyperbolic group. Its boundary $\bdry G$ is well-defined up to $G$-equivariant homeomorphism. 
There is a well-defined map $\QI(G) \to
\Homeo(\bdry G)$, which is injective in the case $\abs{\bdry G} > 3$
\cite{drutu-kapovich}*{Corollary 11.115}\footnote{Note that there is a typo
in the cited corollary. To see injectivity in the case that $G$ is non-elementary, read comments after Lemma 11.112 and in the proof of 11.130 in the same reference.}. When $\abs{\bdry G} > 3$, say $G$ is  \emph{non-elementary}. The next lemma follows from composition with the inclusion $\Comm(G)\to \QI(G)$ described in \RefSec{metric-notions}.

\begin{lemma} \LabLem{hyp-bdry-action}
Suppose $G$ is a torsion-free, non-elementary hyperbolic group. 
The composition $\Comm(G) \to \QI(G) \to \Homeo(\bdry G)$ is an injective map
\[
\Phi: \Comm(G) \to \Homeo(\bdry G).
\]
\end{lemma}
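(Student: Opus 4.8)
The plan is to exhibit $\Phi$ as the composition of two group homomorphisms, each already known to be injective, and then invoke the elementary fact that a composition of injective homomorphisms is injective. In other words, the lemma follows immediately by composing the map of the statement with the inclusion $\Comm(G) \to \QI(G)$ recorded in \RefSec{metric-notions}, once one checks that both constituent maps are genuine group homomorphisms and that ``non-elementary'' is precisely the hypothesis making the second one injective.

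First I would recall that a hyperbolic group $G$ is finitely generated, so that the natural map $\Comm(G) \to \QI(G)$ of \RefSec{metric-notions} is defined: a class $[\phi] \in \Comm(G)$, represented by $\phi \from H \to K$ with $H, K \fidx G$, determines a quasi-isometry of $G$ by precomposing $\phi$ with a closest-point projection $G \to H$, and the resulting class in $\QI(G)$ is independent of all choices. This assignment is a group homomorphism, and by Whyte's theorem, recorded as \cite{farbmoshersbf}*{Proposition 7.5}, it is injective. Next I would recall that left translation makes $G$ act on itself by quasi-isometries, that every quasi-isometry of a proper geodesic hyperbolic space extends continuously to a homeomorphism of its Gromov boundary, and that passing to quasi-isometry classes yields a well-defined group homomorphism $\QI(G) \to \Homeo(\bdry G)$ (here $\bdry G$ is well-defined up to $G$-equivariant homeomorphism). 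Because $G$ is non-elementary we have $\abs{\bdry G} > 3$, so \cite{drutu-kapovich}*{Corollary 11.115}, together with the caveat noted in the footnote to this section, gives that $\QI(G) \to \Homeo(\bdry G)$ is injective.

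Finally, $\Phi$ is by definition the composite $\Comm(G) \to \QI(G) \to \Homeo(\bdry G)$ of these two injective homomorphisms, and is therefore an injective group homomorphism, as claimed. (The torsion-freeness hypothesis is not used in this lemma; it is included for consistency with the applications in \RefCor{sol-hyp-bdry}.) I do not anticipate any real obstacle: all of the substance lies in the two cited injectivity statements, and the only points demanding a sentence of care are that both constituent maps are homomorphisms and that the non-elementary hypothesis is exactly what is needed for injectivity of the boundary map.
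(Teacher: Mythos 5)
Your proposal is correct and matches the paper's argument exactly: the paper likewise obtains $\Phi$ by composing the injective map $\Comm(G)\to\QI(G)$ of Whyte (recorded in \cite{farbmoshersbf}*{Proposition 7.5}) with the map $\QI(G)\to\Homeo(\bdry G)$, which is injective since $G$ is non-elementary \cite{drutu-kapovich}*{Corollary 11.115}. Your parenthetical observation that torsion-freeness is not needed here is also consistent with the cited facts; the hypothesis is carried along for the later results in \RefSec{boundarycomm}.
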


A group $G$ has the \emph{unique root property} if $g^n = h^n$ implies $g = h$
for all $g,h \in G$ and all nonzero $n\in \mathbb{Z}$. If $G$ has the unique root property and $\phi_1 \from H_1
\to K_1$ and $\phi_2\from H_2\to K_2$ are equivalent partial automorphisms of $G$, then $\phi_1$ and
$\phi_2$ agree on $H_1\cap H_2$~\cite{bartholdi-bogopolski}*{Lemma 2.4}. Note
that if $G$ satisfies the unique root property then for any $H\fidx G$ the
natural map $\Aut(H) \to  \Comm(G)$ is injective.

\begin{lemma} \LabLem{hyp-incl-comm}
Suppose $G$ is a torsion-free, non-elementary hyperbolic group. Then $G$ has the unique root property and trivial center. In particular, the map $\commincl: G\to \Comm(G)$ defined in \S\ref{sec:abcomm} is injective.
\end{lemma}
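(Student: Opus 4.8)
The plan is to reduce both assertions to one structural fact about torsion-free hyperbolic groups: for any $g \neq 1$ in $G$, the centralizer $\cent{g}{G}$ is infinite cyclic. I would first establish this. Since $G$ is torsion-free and non-elementary, every nontrivial element has infinite order, so $\langle g\rangle \cong \mathbb{Z}$. By the standard fact that the centralizer of an infinite-order element of a hyperbolic group is virtually cyclic (it lies in the unique maximal virtually cyclic subgroup containing that element), $\cent{g}{G}$ is an infinite, torsion-free, virtually cyclic group, hence infinite cyclic.

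For the unique root property, suppose $g^n = h^n$ with $n \neq 0$. If $g^n = 1$, then $g = 1$, and likewise $h = 1$, by torsion-freeness; so we may assume $w := g^n = h^n \neq 1$. Both $g$ and $h$ commute with $w$, so they lie in $\cent{w}{G} = \langle c\rangle \cong \mathbb{Z}$. Writing $g = c^a$, $h = c^b$, $w = c^d$, the equalities $g^n = w = h^n$ read $an = d = bn$ in $\mathbb{Z}$, whence $a = b$ and $g = h$. This is the entire argument for the unique root property.

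For the center, I would use that a non-elementary hyperbolic group contains a free subgroup of rank two; let $a, b$ generate such an $F_2 \leq G$. Then $a$ and $b$ have infinite order and share no common power. If $z \in Z(G)$ were nontrivial, then $a, b \in \cent{z}{G} = \langle c\rangle \cong \mathbb{Z}$, so $a = c^i$, $b = c^j$ with $i, j \neq 0$, forcing $a^j = c^{ij} = b^i$, a contradiction. Hence $Z(G) = 1$. For the final clause, triviality of the center makes $G \to \Aut(G)$, $g \mapsto \conj{g}$, injective, while the unique root property makes $\Aut(G) \to \Comm(G)$ injective by the Bartholdi--Bogopolski observation recorded just before the lemma statement (applied with $H = G$); therefore $\commincl$, their composite, is injective.

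The only genuine input is the standard fact that centralizers of infinite-order elements in hyperbolic groups are virtually cyclic, together with the existence of a rank-two free subgroup in a non-elementary hyperbolic group; both are classical and would simply be cited. Everything else is elementary bookkeeping, so I do not anticipate any real obstacle.
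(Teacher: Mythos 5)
Your proposal is correct and follows essentially the same route as the paper: both arguments hinge on the single fact that centralizers of nontrivial elements in a torsion-free hyperbolic group are infinite cyclic, and then deduce the unique root property, triviality of the center, and injectivity of $\commincl$ from it. The only differences are cosmetic --- you prove the unique root property by direct computation in $\mathbb{Z}$ where the paper cites Bartholdi--Bogopolski, and for the center you invoke a rank-two free subgroup where one can more directly note that a nontrivial central element $z$ would force $G=\cent{z}{G}$ to be infinite cyclic, contradicting non-elementarity.
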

\begin{proof}
Any nontrivial element $g\in G$ in a hyperbolic group has virtually cyclic centralizer $\cent{g}{G}$ \cite{bridson-haefliger}*{Corollary III.$\Gamma$.3.10}. Since $G$ is torsion-free, $\cent{g}{G}$ is infinite cyclic. It follows that the center of $G$ is trivial, so the conjugation map $G\to \Aut(G)$ is injective. Further, it follows  that $G$ has the unique root property~\cite{bartholdi-bogopolski}*{Lemma 2.2}, and so the natural map $\Aut(G) \to \Comm(G)$ is injective.
\end{proof}

\begin{theorem}
\LabThm{hyp-boundary-comm}
Suppose $G$ is a torsion-free, non-elementary hyperbolic group.
Then the inclusion $\Phi: \Comm(G) \to \Homeo(\bdry G)$ induces an isomorphism
\[
\Comm(G) \cong \Comm_{\Homeo(\bdry G)}(G),
\]
where $G \leq \Homeo(\bdry G)$ is identified with its image under the injective map $\Phi\circ \commincl$.
\end{theorem}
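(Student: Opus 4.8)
The plan is to show that the injective homomorphism $\Phi$ of \RefLem{hyp-bdry-action} has image exactly $\Comm_{\Homeo(\bdry G)}(G)$; since $\Phi$ is injective this gives the isomorphism. One inclusion is formal. By \RefLem{universal-comm}, $\Comm(G) = \Comm_{\Comm(G)}(\commincl(G))$, and any homomorphism of groups carries relative commensurators into relative commensurators of images, so $\Phi(\Comm(G)) = \Phi(\Comm_{\Comm(G)}(\commincl(G))) \subseteq \Comm_{\Homeo(\bdry G)}(\Phi(\commincl(G))) = \Comm_{\Homeo(\bdry G)}(G)$. Here one first checks that $\Phi \circ \commincl$ is the tautological inclusion: for $g \in G$ the conjugation $\conj{g}$ is the quasi-isometry representing $\Phi([\conj g])$, and $\conj g$ is bounded distance from left translation by $g$, whose boundary homeomorphism is the action of $g$ on $\bdry G$.

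The substance is the reverse inclusion. Fix $\psi \in \Comm_{\Homeo(\bdry G)}(G)$ and set $L = G \cap \psi G \psi^{-1}$, a subgroup of $\Homeo(\bdry G)$ lying in both $G$ and $\psi G \psi^{-1}$, with $[G:L]$ and $[\psi G \psi^{-1}:L]$ finite by hypothesis. Then $H := \psi^{-1} L \psi \leq G$ has $[G:H] = [\psi G \psi^{-1}:L] < \infty$, and $\phi \from H \to L$ defined by $\phi(h) = \psi h \psi^{-1}$ is an isomorphism between finite-index subgroups of $G$. Let $[\phi] \in \Comm(G)$ be its class. It remains to prove $\Phi([\phi]) = \psi$ as homeomorphisms of $\bdry G$, which completes the proof that $\Phi$ is a bijection onto $\Comm_{\Homeo(\bdry G)}(G)$.

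To identify $\Phi([\phi])$ I would unwind its definition: it is the boundary homeomorphism $\partial q$ of the quasi-isometry $q = \phi \circ \pi \from G \to G$, where $\pi \from G \to H$ is a closest-point projection. Because left translation by $h \in H$ preserves $H$ and is an isometry, $\pi(hg)$ is uniformly bounded distance from $h\,\pi(g)$, and because $\phi$ is a quasi-isometry $H \to L$ (all finite-index subgroups being quasi-isometric to $G$), $q(hg)$ is uniformly bounded distance from $\phi(h)\,q(g)$. Hence for any infinite-order $h \in H$, the quasi-geodesic ray $n \mapsto h^n$ has $q$-image bounded distance from $n \mapsto \phi(h)^n q(e)$, so $\partial q(h^{+\infty}) = \phi(h)^{+\infty}$, the attracting fixed point of $\phi(h)$ acting on $\bdry G$. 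On the other hand, from $\psi h \psi^{-1} = \phi(h)$ in $\Homeo(\bdry G)$ the homeomorphism $\psi$ conjugates the north-south dynamics of $h$ on $\bdry G$ to that of $\phi(h)$, whence $\psi(h^{+\infty}) = \phi(h)^{+\infty}$ as well. Since $H$ is itself a non-elementary hyperbolic group, the attracting fixed points of its infinite-order elements are dense in $\bdry H = \bdry G$; as $\Phi([\phi]) = \partial q$ and $\psi$ are continuous and agree on this dense set, they are equal.

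The main obstacle is precisely the identity $\Phi([\phi]) = \psi$: it forces one to translate the definition of $\Phi$ (which passes through $\QI(G)$ and the standard map $\QI(G) \to \Homeo(\bdry G)$) into a pointwise description comparable with the abstract homeomorphism $\psi$, and then to carry out that comparison on the dense subset of $\bdry G$ where the dynamics are rigid, namely the fixed points of infinite-order elements. The two technical ingredients are the coarse equivariance estimate for $q$ and the density of rational points in $\bdry G$; torsion-freeness of $G$ enters only through \RefLem{hyp-incl-comm}, which guarantees that $\commincl$, and hence $\Phi \circ \commincl$, is injective, so that the identification $G \leq \Homeo(\bdry G)$ in the statement is meaningful.
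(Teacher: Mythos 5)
Your proposal is correct and follows essentially the same route as the paper: the formal inclusion $\Phi(\Comm(G))\leq \Comm_{\Homeo(\bdry G)}(G)$ via \RefLem{universal-comm}, the inverse map sending $\psi$ to the class of conjugation $\psi^{-1}G\psi\cap G\to G\cap\psi G\psi^{-1}$, and the verification of the composite using coarse equivariance of the representing quasi-isometry together with density of attracting fixed points and rigidity of north--south dynamics. The only (cosmetic) difference is that you compare $\Phi([\phi])$ and $\psi$ directly on attracting fixed points, whereas the paper shows the discrepancy homeomorphism $f^{-1}\adhocf$ commutes with a finite-index subgroup and hence fixes a dense set; your write-up is in fact slightly more explicit about why $\Phi([\phi])$ conjugates $h$ to $\phi(h)$ on the boundary.
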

\begin{proof}
By \RefLem{universal-comm} we know $\Comm(G) = \Comm_{\Comm(G)}(\commincl(G))$. It is an exercise to check that if $K$ is a group containing a subgroup $H$ and $\phi$ is any homomorphism with domain $K$, then $\phi( \Comm_K(H) ) \leq \Comm_{\phi(K)}(\phi(H))$. This implies 
\[
\Phi(\Comm(G)) \leq \Comm_{ \Phi( \Comm(G))} (\Phi(\commincl(G))) \leq \Comm_{\Homeo(\bdry G)}(G).
\]
Therefore it suffices to show that $\Phi$ has image $\Comm_{\Homeo(\bdry G)}(G)$.

Define a function $\Psi : \Comm_{\Homeo(\bdry G)}(G) \to \Comm(G)$ as follows: Given $f\in \Comm_{\Homeo(\bdry G)}(G)$, let $G_1 = f^{-1} G f \cap G$ and $G_2 = f G_1 f^{-1}$. By definition, both $G_1$ and $G_2$ are finite-index subgroups of $G$, and conjugation $\conj{f}:G_1 \to G_2$ is an isomorphism. Let $\Psi(f) = [\conj{f}] \in \Comm(G)$.

To finish the proof, it suffices to show $(\Phi\circ \Psi)(f) = f$ for any homeomorphism $f\in \Comm_{\Homeo(\bdry G)}(G)$. To this end, fix $f\in \Comm_{\Homeo(\bdry G)}(G)$ and let $\adhocf = (\Phi\circ \Psi)(f)$. Find finite-index subgroups $G_1, G_2 \leq G$ such that $f G f^{-1} = G_2$. Applying the above definitions, $\adhocf g \adhocf^{-1} = fgf^{-1}$ for any $g\in G_1$. Equivalently, for any $g\in G_1$, 
\begin{equation}
\LabEq{conj-on-bdry}
(f^{-1} \adhocf) g (f^{-1} \adhocf)^{-1} = g.
\end{equation}

We now appeal to basic facts about the action of a hyperbolic group on its boundary---see the survey of Kapovich and Benakli for a reference \cite{kapovich-benakli}*{\S4}. Every $g\in G$ has exactly two fixed points on $\bdry G$, an attracting fixed point $g^+$ and a repelling fixed point $g^-$, and attracting fixed points of elements of $G$ are dense in $\bdry G$. Suppose $x\in \bdry G$ is an attracting fixed point $g^+$ for $g\in G$. Replacing $g$ with a sufficiently large positive power, we may assuming $g\in G_1$. It follows from \RefEq{conj-on-bdry} that $f^{-1}(\adhocf(g^+))$ is a fixed point of $g$ with the same dynamical properties as $g^+$, therefore $f^{-1}(\adhocf(g^+)) = g^+$. By density of the set of attracting fixed points, this implies $\adhocf = f$. 
\end{proof}

\begin{corollary}
\LabCor{sol-hyp-bdry}
 Suppose $(X,\ast)$ is an unloopable compact geodesic metric space homotopy equivalent to an  aspherical CW complex. Suppose $G = \pi_1(X,\ast)$ is a residually finite, torsion-free, non-elementary hyperbolic group. Then the map $Q \from \HMod(\sol{X},\ast) \to \QI(G)$ of \RefProp{modb-qi-morphism} induces an isomorphism
 \[
 \HMod(\sol{X},\ast) \cong \Comm_{\Homeo(\bdry G)}(G).
 \]
\end{corollary}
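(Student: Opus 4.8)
The plan is to assemble the corollary from three results already established: the isomorphism $C$ of \RefThm{ptd-homotopy-comm}, the factorization of $Q$ through $C$ from \RefThm{qi-map-factors}, and the boundary realization of \RefThm{hyp-boundary-comm}. First I would observe that the hypotheses are exactly what is needed to invoke each of these. Since $(X,\ast)$ is unloopable, compact, geodesic, and homotopy equivalent to an aspherical CW complex, \RefThm{ptd-homotopy-comm} applies and $C\from \HMod(\sol{X},\ast)\to\Comm(G)$ is an isomorphism. Since $G$ is residually finite, the baseleaf map is an embedding by \RefCor{baseleaf-inj} and \RefProp{modb-qi-morphism} and \RefThm{qi-map-factors} apply. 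Since $G$ is torsion-free and non-elementary hyperbolic, \RefLem{hyp-bdry-action} and \RefThm{hyp-boundary-comm} apply.

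Next I would unwind the maps. By \RefThm{qi-map-factors}, $Q$ is the composition of $C\from\HMod(\sol{X},\ast)\to\Comm(G)$ with the natural inclusion $\Comm(G)\to\QI(G)$ of \RefSec{metric-notions}; indeed the proof of \RefThm{qi-map-factors} computes $C([f])=[\phi]$ and identifies the image of $[f]$ in $\QI(\ucov{X})\cong\QI(G)$ with the image of $[\phi]$ under closest-point projection followed by $\phi$, which is the definition of that natural inclusion. Post-composing with the map $\QI(G)\to\Homeo(\bdry G)$, which is injective because $G$ is non-elementary, and applying \RefLem{hyp-bdry-action}, the resulting composite $\HMod(\sol{X},\ast)\to\Homeo(\bdry G)$ is precisely $\Phi\circ C$, where $\Phi\from\Comm(G)\to\Homeo(\bdry G)$ is the injective map of \RefLem{hyp-bdry-action}. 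This is the map that the corollary refers to as the one ``induced by $Q$''.

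Finally I would conclude. The map $C$ is an isomorphism by \RefThm{ptd-homotopy-comm}, and by \RefThm{hyp-boundary-comm} the map $\Phi$ corestricts to an isomorphism $\Comm(G)\cong\Comm_{\Homeo(\bdry G)}(G)$. Composing these two isomorphisms shows that $\Phi\circ C$ is an isomorphism $\HMod(\sol{X},\ast)\cong\Comm_{\Homeo(\bdry G)}(G)$, as claimed. I do not expect a substantive obstacle here: the only point requiring care is the verification, carried out in the second paragraph, that the factoring map of \RefThm{qi-map-factors} is literally the natural inclusion $\Comm(G)\to\QI(G)$ used to build $\Phi$ in \RefLem{hyp-bdry-action}, so that the boundary map induced by $Q$ and the composite $\Phi\circ C$ genuinely coincide rather than merely agreeing up to bounded error.
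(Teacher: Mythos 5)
Your proposal is correct and follows essentially the same route as the paper: invoke \RefThm{qi-map-factors} to see that $Q$ factors through $C$, \RefThm{ptd-homotopy-comm} to see that $C$ is an isomorphism under the aspherical hypothesis, and \RefThm{hyp-boundary-comm} to identify $\Comm(G)$ with $\Comm_{\Homeo(\bdry G)}(G)$. Your second paragraph simply spells out a compatibility check that the paper's proof leaves implicit.
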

\begin{proof}
    By \RefThm{qi-map-factors}, the image of $Q$ lies in $\Comm(G) \leq \QI(G)$. Because $X$ is homotopy equivalent to an aspherical CW complex, $Q$ defines an isomorphism $\HMod(\sol{X},\ast) \to \Comm(G)$ by \RefThm{ptd-homotopy-comm}. The result follows from \RefThm{hyp-boundary-comm}.
\end{proof}

\begin{bibdiv}
\begin{biblist}
\bib{bartholdi-bogopolski}{article}{
  author={Bartholdi, L.},
  author={Bogopolski, O.},
  title={On abstract commensurators of groups},
  journal={J. Group Theory},
  volume={13},
  date={2010},
  number={6},
  pages={903--922},
  issn={1433-5883},
  review={\MR {2736164}},
  doi={10.1515/JGT.2010.021},
}

\bib{biswas-nag-sullivan}{article}{
  author={Biswas, Indranil},
  author={Nag, Subhashis},
  author={Sullivan, Dennis},
  title={Determinant bundles, Quillen metrics and Mumford isomorphisms over the universal commensurability Teichm\"{u}ller space},
  journal={Acta Math.},
  volume={176},
  date={1996},
  number={2},
  pages={145--169},
  issn={0001-5962},
  review={\MR {1397561}},
  doi={10.1007/BF02551581},
}

\bib{brendle-margalit}{article}{
  author={Brendle, Tara E.},
  author={Margalit, Dan},
  title={Normal subgroups of mapping class groups and the metaconjecture of Ivanov},
  journal={J. Amer. Math. Soc.},
  volume={32},
  date={2019},
  number={4},
  pages={1009--1070},
  issn={0894-0347},
  review={\MR {4013739}},
  doi={10.1090/jams/927},
}

\bib{bridson-haefliger}{book}{
  author={Bridson, Martin R.},
  author={Haefliger, Andr\'{e}},
  title={Metric spaces of non-positive curvature},
  series={Grundlehren der Mathematischen Wissenschaften [Fundamental Principles of Mathematical Sciences]},
  volume={319},
  publisher={Springer-Verlag, Berlin},
  date={1999},
  pages={xxii+643},
  isbn={3-540-64324-9},
  review={\MR {1744486}},
  doi={10.1007/978-3-662-12494-9},
}

\bib{burillo-cleary-rover}{article}{
  author={Burillo, Jos\'{e}},
  author={Cleary, Sean},
  author={R\"{o}ver, Claas E.},
  title={Commensurations and subgroups of finite index of Thompson's group $F$},
  journal={Geom. Topol.},
  volume={12},
  date={2008},
  number={3},
  pages={1701--1709},
  issn={1465-3060},
  review={\MR {2421137}},
  doi={10.2140/gt.2008.12.1701},
}

\bib{clark-fokkink-bihomogeneity}{article}{
  author={Clark, Alex},
  author={Fokkink, Robbert},
  title={Bihomogeneity of solenoids},
  journal={Algebr. Geom. Topol.},
  volume={2},
  date={2002},
  pages={1--9},
  issn={1472-2747},
  review={\MR {1885212}},
  doi={10.2140/agt.2002.2.1},
}

\bib{clark-hurder-lukina-classification}{article}{
  author={Clark, Alex},
  author={Hurder, Steven},
  author={Lukina, Olga},
  title={Classifying matchbox manifolds},
  journal={Geom. Topol.},
  volume={23},
  date={2019},
  number={1},
  pages={1--27},
  issn={1465-3060},
  review={\MR {3921315}},
  doi={10.2140/gt.2019.23.1},
}

\bib{clark-hurder-lukina-bing}{article}{
  author={Clark, Alex},
  author={Hurder, Steven},
  author={Lukina, Olga},
  title={Pro-groups and generalizations of a theorem of Bing},
  journal={Topology Appl.},
  volume={271},
  date={2020},
  pages={106986, 26},
  issn={0166-8641},
  review={\MR {4046916}},
  doi={10.1016/j.topol.2019.106986},
}

\bib{vanDantzig1930}{article}{
  author={van Dantzig, D.},
  title={Ueber topologisch homogene Kontinua},
  journal={Fundamenta Mathematicae},
  language={German},
  number={1},
  pages={102-125},
  url={http://eudml.org/doc/212336},
  volume={15},
  year={1930},
}

\bib{drutu-kapovich}{book}{
  author={Dru\c {t}u, Cornelia},
  author={Kapovich, Michael},
  title={Geometric group theory},
  series={American Mathematical Society Colloquium Publications},
  volume={63},
  note={With an appendix by Bogdan Nica},
  publisher={American Mathematical Society, Providence, RI},
  date={2018},
  pages={xx+819},
  isbn={978-1-4704-1104-6},
  review={\MR {3753580}},
  doi={10.1090/coll/063},
}

\bib{dyer-hurder-lukina-cantor}{article}{
  author={Dyer, Jessica},
  author={Hurder, Steven},
  author={Lukina, Olga},
  title={The discriminant invariant of Cantor group actions},
  journal={Topology Appl.},
  volume={208},
  date={2016},
  pages={64--92},
  issn={0166-8641},
  review={\MR {3506971}},
  doi={10.1016/j.topol.2016.05.005},
}

\bib{farbmoshersbf}{article}{
  author={Farb, B.},
  author={Mosher, L.},
  title={The geometry of surface-by-free groups},
  journal={Geom. Funct. Anal.},
  volume={12},
  date={2002},
  number={5},
  pages={915--963},
  issn={1016-443X},
  review={\MR {1937831}},
  doi={10.1007/PL00012650},
}

\bib{fokkink-oversteegen-homogeneous}{article}{
  author={Fokkink, Robbert},
  author={Oversteegen, Lex},
  title={Homogeneous weak solenoids},
  journal={Trans. Amer. Math. Soc.},
  volume={354},
  date={2002},
  number={9},
  pages={3743--3755},
  issn={0002-9947},
  review={\MR {1911519}},
  doi={10.1090/S0002-9947-02-03017-9},
}

\bib{geoghegan-krasinkiewicz}{article}{
  author={Geoghegan, Ross},
  author={Krasinkiewicz, J\'{o}zef},
  title={Empty components in strong shape theory},
  journal={Topology Appl.},
  volume={41},
  date={1991},
  number={3},
  pages={213--233},
  issn={0166-8641},
  review={\MR {1135099}},
  doi={10.1016/0166-8641(91)90005-7},
}

\bib{hall}{article}{
  author={Hall, Marshall, Jr.},
  title={A topology for free groups and related groups},
  journal={Ann. of Math. (2)},
  volume={52},
  date={1950},
  pages={127--139},
  issn={0003-486X},
  review={\MR {36767}},
  doi={10.2307/1969513},
}

\bib{hurder-coarse}{article}{
  author={Hurder, Steven},
  title={Coarse geometry of foliations},
  conference={ title={Geometric study of foliations}, address={Tokyo}, date={1993}, },
  book={ publisher={World Sci. Publ., River Edge, NJ}, },
  date={1994},
  pages={35--96},
  review={\MR {1363719}},
}

\bib{ivanov}{article}{
  author={Ivanov, Nikolai V.},
  title={Automorphism of complexes of curves and of Teichm\"{u}ller spaces},
  journal={Internat. Math. Res. Notices},
  date={1997},
  number={14},
  pages={651--666},
  issn={1073-7928},
  review={\MR {1460387}},
  doi={10.1155/S1073792897000433},
}

\bib{kapovich-benakli}{article}{
  author={Kapovich, Ilya},
  author={Benakli, Nadia},
  title={Boundaries of hyperbolic groups},
  conference={ title={Combinatorial and geometric group theory}, address={New York, 2000/Hoboken, NJ}, date={2001}, },
  book={ series={Contemp. Math.}, volume={296}, publisher={Amer. Math. Soc., Providence, RI}, },
  date={2002},
  pages={39--93},
  review={\MR {1921706}},
  doi={10.1090/conm/296/05068},
}

\bib{mardesic-segal}{book}{
  author={Marde\v {s}i\'{c}, Sibe},
  author={Segal, Jack},
  title={Shape theory},
  series={North-Holland Mathematical Library},
  volume={26},
  note={The inverse system approach},
  publisher={North-Holland Publishing Co., Amsterdam-New York},
  date={1982},
  pages={xv+378},
  isbn={0-444-86286-2},
  review={\MR {676973}},
}

\bib{mccord}{article}{
  author={McCord, M. C.},
  title={Inverse limit sequences with covering maps},
  journal={Trans. Amer. Math. Soc.},
  volume={114},
  date={1965},
  pages={197--209},
  issn={0002-9947},
  review={\MR {173237}},
  doi={10.2307/1993997},
}

\bib{odden}{article}{
  author={Odden, Chris},
  title={The baseleaf preserving mapping class group of the universal hyperbolic solenoid},
  journal={Trans. Amer. Math. Soc.},
  volume={357},
  date={2005},
  number={5},
  pages={1829--1858},
  issn={0002-9947},
  review={\MR {2115078}},
  doi={10.1090/S0002-9947-04-03472-5},
}

\bib{rogers-tollefson}{article}{
  author={Rogers, James T., Jr.},
  author={Tollefson, Jeffrey L.},
  title={Maps between weak solenoidal spaces},
  journal={Colloq. Math.},
  volume={23},
  date={1971},
  pages={245--249},
  issn={0010-1354},
  review={\MR {309046}},
  doi={10.4064/cm-23-2-245-249},
}

\bib{rover}{article}{
  author={R\"{o}ver, Claas E.},
  title={Abstract commensurators of groups acting on rooted trees},
  booktitle={Proceedings of the Conference on Geometric and Combinatorial Group Theory, Part I (Haifa, 2000)},
  journal={Geom. Dedicata},
  volume={94},
  date={2002},
  pages={45--61},
  issn={0046-5755},
  review={\MR {1950873}},
  doi={10.1023/A:1020916928393},
}

\bib{smale}{article}{
  author={Smale, S.},
  title={Differentiable dynamical systems},
  journal={Bull. Amer. Math. Soc.},
  volume={73},
  date={1967},
  pages={747--817},
  issn={0002-9904},
  review={\MR {228014}},
  doi={10.1090/S0002-9904-1967-11798-1},
}

\bib{spanier}{book}{
  author={Spanier, Edwin H.},
  title={Algebraic topology},
  publisher={McGraw-Hill Book Co., New York-Toronto, Ont.-London},
  date={1966},
  pages={xiv+528},
  review={\MR {0210112}},
}

\bib{sullivan-universal}{article}{
  author={Sullivan, Dennis},
  title={Linking the universalities of Milnor-Thurston, Feigenbaum and Ahlfors-Bers},
  conference={ title={Topological methods in modern mathematics}, address={Stony Brook, NY}, date={1991}, },
  book={ publisher={Publish or Perish, Houston, TX}, },
  date={1993},
  pages={543--564},
  review={\MR {1215976}},
}

\bib{sullivan14}{article}{
  author={Sullivan, Dennis},
  title={Solenoidal manifolds},
  journal={J. Singul.},
  volume={9},
  date={2014},
  pages={203--205},
  review={\MR {3249058}},
  doi={10.5427/jsing.2014.9o},
}

\bib{vietoris}{article}{
  author={Vietoris, L.},
  title={\"{U}ber den h\"{o}heren Zusammenhang kompakter R\"{a}ume und eine Klasse von zusammenhangstreuen Abbildungen},
  language={German},
  journal={Math. Ann.},
  volume={97},
  date={1927},
  number={1},
  pages={454--472},
  issn={0025-5831},
  review={\MR {1512371}},
  doi={10.1007/BF01447877},
}

\bib{williams}{article}{
  author={Williams, R. F.},
  title={Expanding attractors},
  journal={Inst. Hautes \'{E}tudes Sci. Publ. Math.},
  number={43},
  date={1974},
  pages={169--203},
  issn={0073-8301},
  review={\MR {348794}},
}

\end{biblist}
\end{bibdiv}

\end{document}